\newtheorem{theorem}{Theorem}[section]
\newtheorem{lemma}[theorem]{Lemma}
\newtheorem{proposition}[theorem]{Proposition}
\newtheorem{corollary}[theorem]{Corollary}
\newtheorem{example}[theorem]{Example}
\newtheorem{remark}[theorem]{Remark}
\newenvironment{acknowledgements}[1][Acknowledgements:]{\begin{trivlist}
\item[\hskip \labelsep {\bfseries #1}]}{\end{trivlist}}
\begin{document}
\title{B-Valued Free Convolution for Unbounded Operators}
\author[Williams, John D.]{John D.\ Williams}\let\thefootnote\relax\footnotetext{Supported in part the Alexander von Humboldt Stiftung and the Simons Foundation.}
\address{J.\ Williams, Fachrichtung Mathematik, Universit\"at des Saarlandes,
Saarbr\"ucken, Germany. 66041.}
\email{williams@math.uni-sb.de} 
\let\thefootnote\relax\footnotetext{\textit{2000 AMS Subject Classification.}  Primary: 46L54, \ Seconday: 46E40}
\maketitle

\begin{abstract}
Consider the $\mathcal{B}$-valued probability space $(\mathcal{A}, E, \mathcal{B})$, where $\mathcal{A}$ is a tracial von Neumann algebra.
We extend the theory of operator valued free probability to the algebra of affiliated operators $\tilde{\mathcal{A}}$. For a random variable  $X \in \tilde{\mathcal{A}}^{sa}$ we study the Cauchy transform $G_{X}$ and show that the von Neumann algebra $(\mathcal{B} \cup \{X\})''$ can be recovered from this function.  In the case where $\mathcal{B}$ is finite dimensional, we show that, when $X, Y \in \tilde{\mathcal{A}}^{sa}$ are assumed to be $\mathcal{B}$-free, the $\mathcal{R}$-transforms are defined on universal subsets of the resolvent and satisfy $$ \mathcal{R}_{X} + \mathcal{R}_{Y} = \mathcal{R}_{X + Y}. $$
Examples indicating a failure of the theory for infinite dimensional $\mathcal{B}$ are provided.
Lastly, we show that the  functions that arise as the Cauchy transform of affiliated operators are the limit points of the  Cauchy transforms of bounded operators in a suitable topology.
\end{abstract}

\section{Introduction}
The theory of free probability was initiated in \cite{Vadd} with Voiculescu's observation that random variables in free product C$^{\ast}$ algebras display a form of independence that allows them to be studied through probabilistic methods.  In developing a complete probability theory, it was necessary to extend these results to measures with unbounded support.  This was accomplished in Bercovici and Voiculescu's work \cite{BV}, which inspired this present paper and remains one of the strongest and clearest articles in free probability theory.

Voiculescu extended free probability to amalgamated free products of C$^{\ast}$-algebras in \cite{V2}, replacing states acting on these algebras with conditional expectations onto a distinguished subalgebra.  This theory has achieved remarkable growth in recent years and we refer to \cite{Speichop}
for an overview of the combinatorial approach to this subject and \cite{BPV}, \cite{PV},  \cite{BMS} and \cite{AWMon}  for some of the recent advances in this field.

The purpose of this paper is to define free convolution for the operator-valued equivalent of unbounded measures.  
In particular, we will show that these operations may be defined provided that we study operators affiliated with a tracial von Neumann algebra $\mathcal{A}$.  In studying these objects, we are left with very few tools since there is not  an appropriate notion of $\mathcal{B}$-valued measure theory that the author is aware of and these operators need not have any moments.  Thus, the only reasonable probabilistic object associated to these operators is the Cauchy transform, and the analysis of this function will form the basis of our study.  In this sense, this paper may be seen as a continuation of the author's work in \cite{Wil6}.

The first of the main results in this paper is Theorem \eqref{unboundiso} which asserts that the Cauchy transform encodes all operator algebraic information associated to the random variable.  This is an extension of the following theorem due to Bercovici and Voiculescu to the $\mathcal{B}$-valued case:
\begin{theorem}\cite[Theorem 4.6]{BV}
Let $(\mathcal{A},\tau)$ be a W$^{\ast}$-probability space, let $T_{i} \in \tilde{\mathcal{A}}^{sa}$ be free random
variables for $i = 1,2, \ldots , k$, and let $Q$ be a self adjoint polynomial in $k$ non-commuting variables.  Then the distributions of the 
random variable $Q(T_{1}, T_{2} , \ldots , T_{k})$ depends only on the distributions of $T_{1} , T_{2} , \ldots, T_{k}$.
\end{theorem}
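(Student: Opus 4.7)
The strategy is to reduce the unbounded case to the bounded case by a spectral truncation argument. For the bounded case, the statement is classical: if $T_1, \ldots, T_k$ are bounded self-adjoint and free, the moment-cumulant relations show that every mixed moment $\tau(T_{i_1}^{n_1} \cdots T_{i_m}^{n_m})$ is a universal polynomial in the individual moments of each $T_i$, and hence the distribution of any self-adjoint polynomial $Q(T_1, \ldots, T_k)$ depends only on the distributions $\mu_{T_i}$.

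To handle unbounded affiliated operators, I would truncate: for each $n \geq 1$, set $f_n(t) = t$ on $[-n,n]$ and $f_n(t) = n\,\mathrm{sgn}(t)$ elsewhere, and define $T_i^{(n)} := f_n(T_i)$ via Borel functional calculus. Then each $T_i^{(n)}$ is bounded self-adjoint, lies in the von Neumann subalgebra generated by $T_i$ (so the family $\{T_1^{(n)}, \ldots, T_k^{(n)}\}$ remains free for every $n$), and its distribution $(f_n)_*\mu_{T_i}$ depends only on $\mu_{T_i}$. Applying the bounded case, the distribution of $Q(T_1^{(n)}, \ldots, T_k^{(n)})$ depends only on the individual $\mu_{T_i}$'s for each fixed $n$.

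It remains to pass to the limit. Since $|T_i - T_i^{(n)}|$ is supported on the spectral projection of $T_i$ for $\mathbb{R}\setminus[-n,n]$, and this projection has trace tending to $0$ (by finiteness of $\tau$ combined with the fact that the spectral distribution $\mu_{T_i}$ is a probability measure on $\mathbb{R}$), we have $T_i^{(n)} \to T_i$ in the measure topology on $\tilde{\mathcal{A}}$. By Nelson's theorem that $\tilde{\mathcal{A}}$ is a topological $\ast$-algebra under this topology, polynomial evaluation is continuous, so $Q(T_1^{(n)}, \ldots, T_k^{(n)}) \to Q(T_1, \ldots, T_k)$ in measure. Convergence in measure of self-adjoint elements implies weak convergence of their spectral distributions (one may verify this via convergence of Cauchy transforms on $\mathbb{C}^+$), which gives the conclusion.

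The main technical obstacle is the last step: ensuring that polynomial composition of unbounded operators behaves well under truncation. The measure topology on $\tilde{\mathcal{A}}$ is precisely the tool developed by Nelson for this purpose, and without it one cannot even make sense of $Q(T_1, \ldots, T_k)$ as a well-defined affiliated operator, let alone control its distribution. Once that framework is in place, the joint continuity of $+$ and $\cdot$ on $\tilde{\mathcal{A}}$ reduces the problem to the three ingredients above: (i) the classical bounded result, (ii) preservation of freeness under functional calculus, and (iii) continuity of the map $T \mapsto \mu_T$ from self-adjoint operators (in measure) to probability measures (weakly).
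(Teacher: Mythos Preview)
Your argument is correct and is essentially the original Bercovici--Voiculescu proof. The present paper does not reprove this scalar statement directly (it is simply cited from [BV]), but it does prove the $\mathcal{B}$-valued generalization, and the method used there is genuinely different from yours.

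Rather than spectral truncation and Nelson's measure topology, the paper passes to the bounded resolvents $(T_i \pm i)^{-1}$. It first shows that the Cauchy transform of $T_i$ on the enlarged domain $\Omega(\mathcal{B})$ determines the joint $\mathcal{B}$-distribution of $(T_i+i)^{-1}$ and $(T_i-i)^{-1}$, and hence the von Neumann algebra $\{T_i, \mathcal{B}\}''$ up to trace-preserving spatial isomorphism. Freeness then gives a free-product decomposition $\{T_1,\ldots,T_k,\mathcal{B}\}'' \cong \ast_{\mathcal{B}}\, \{T_i,\mathcal{B}\}''$, so the entire algebra---and with it the distribution of $Q(T_1,\ldots,T_k)$---is determined by the individual Cauchy transforms.

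Your truncation approach is more elementary in the scalar case and stays closer to classical probability; its main cost is the reliance on Nelson's theorem on the joint continuity of the $\ast$-algebra operations in the measure topology, which has no ready analogue compatible with a $\mathcal{B}$-valued conditional expectation. The paper's resolvent/free-product route is more structural and is precisely what makes the operator-valued extension go through: as the Cauchy-distribution counterexample in the paper shows, in the $\mathcal{B}$-valued setting the Cauchy transform restricted to $H^{+}(\mathcal{B})$ can fail to determine the algebra, so one is forced to work on the larger domain $\Omega(\mathcal{B})$ and argue via the bounded operators $(T\pm i)^{-1}$ rather than through a direct limiting argument of the type you propose.
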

We note that our theorem requires agreement of the Cauchy transform on fairly robust subsets of the resolvent.  
It was anticipated that the non-commutative upper half plane would be a sufficient domain to recover the random variable, but this is not the case as the counterexample  in Section \eqref{CounterE} demonstrates.

The next main result is Theorem  \eqref{MainTheorem} which states that the Cauchy transforms of unbounded operators may be characterized as the limiting functions of the Cauchy transforms of bounded operators.  

The last of the main results are Theorems \eqref{Rexist} and \eqref{Rsum} which form operator valued version of the following:
\begin{theorem}\cite[Corrolary 5.8]{BV}
Let $\mu_{1}$ and $\mu_{2}$ be probability measures on $\mathbb{R}$, and let $\mu = \mu_{1} \boxplus \mu_{2}$.  For each $\alpha > 0$ we have that $\varphi_{\mu} = \varphi_{\mu_{1}} + \varphi_{\mu_{2}}$ in $\Gamma_{\alpha, \beta}$ for $\beta $ sufficiently large.
\end{theorem}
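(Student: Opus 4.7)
The plan is to build the Voiculescu transform $\varphi_\nu(z) := F_\nu^{-1}(z) - z$ (where $F_\nu := 1/G_\nu$ is the reciprocal Cauchy transform) on a suitable truncated Stolz cone, prove additivity directly in the compactly supported case, and pass to the limit using continuity of free convolution. First, I would establish existence of $\varphi_\nu$ on $\Gamma_{\alpha,\beta}$ for an arbitrary probability measure $\nu$ on $\mathbb{R}$. The Nevanlinna representation gives $F_\nu(z) = z + b + \int \frac{1+tz}{t-z}\,d\rho(t)$ for some real $b$ and a finite positive Borel measure $\rho$, which forces $F_\nu(z)/z \to 1$ as $z \to \infty$ nontangentially. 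Given $\alpha > 0$, for $\beta$ sufficiently large $F_\nu$ becomes a small perturbation of the identity on a cone slightly wider than $\Gamma_{\alpha,\beta}$, and a Rouché (or inverse function theorem) argument produces a right inverse on $\Gamma_{\alpha,\beta}$. Applying this to $\mu_1$, $\mu_2$ and $\mu$, and enlarging $\beta$ as necessary, yields a common domain where all three $\varphi$-functions are well defined.

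For additivity itself I would approximate: choose sequences $\mu_k^{(n)}$ of compactly supported probability measures with $\mu_k^{(n)} \to \mu_k$ weakly. Weak convergence gives locally uniform convergence of $F_{\mu_k^{(n)}}$ to $F_{\mu_k}$ on $\mathbb{C}^+$, and by continuity of $\boxplus$ with respect to weak convergence (\cite[Proposition~4.13]{BV}) one also has $\mu_1^{(n)} \boxplus \mu_2^{(n)} \to \mu$. For the compactly supported case the identity $\varphi_{\mu_1^{(n)} \boxplus \mu_2^{(n)}} = \varphi_{\mu_1^{(n)}} + \varphi_{\mu_2^{(n)}}$ holds on a neighborhood of $\infty$: realize each $\mu_k^{(n)}$ as the distribution of a bounded self-adjoint operator in a reduced free product of tracial von Neumann algebras and invoke the classical R-transform additivity, which is available because all moments exist and the formal series converge on a disk around $0$.

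The remaining issue, and the main obstacle, is uniformity in $n$: one needs a single cone $\Gamma_{\alpha,\beta}$ on which all of $F_{\mu_k^{(n)}}^{-1}$ and $F_{\mu_1^{(n)}\boxplus\mu_2^{(n)}}^{-1}$ are simultaneously defined and converge uniformly to the corresponding inverses. This reduces to controlling the Nevanlinna data $(b_n, \rho_n)$ attached to $F_{\mu_k^{(n)}}$ uniformly in $n$. Tightness of the sequence $\{\mu_k^{(n)}\}$, together with the fact that the Nevanlinna pair is determined by the boundary behaviour of $F_\nu$, furnishes such bounds: the total masses $\rho_n(\mathbb{R})$ stay bounded and the error $F_{\mu_k^{(n)}}(z) - z$ satisfies an $o(|z|)$ estimate uniformly in $n$ on $\Gamma_{\alpha,\beta}$. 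Once this uniformity is in hand, Hurwitz's theorem applied to the equations $F_{\mu_k^{(n)}}(z) = w$ for $w \in \Gamma_{\alpha,\beta}$ delivers uniform convergence of the inverses, and passing $n \to \infty$ in the bounded-case identity yields $\varphi_\mu = \varphi_{\mu_1} + \varphi_{\mu_2}$ on $\Gamma_{\alpha,\beta}$.
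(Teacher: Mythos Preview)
The paper does not prove this statement; it is quoted from \cite{BV} as motivation, with no proof given here. What the paper \emph{does} prove is the operator-valued analogue (Theorems~\ref{Rexist} and~\ref{Rsum}), and your proposal matches that argument almost step for step: truncate to bounded operators, show the Cauchy transforms converge on compacta, upgrade this to convergence of the compositional inverses on a common domain, and then pass to the limit in the bounded-case identity $\mathcal{R}_{X_k+Y_k}=\mathcal{R}_{X_k}+\mathcal{R}_{Y_k}$. So at the level of strategy your sketch is the same as the paper's, specialized to $\mathcal{B}=\mathbb{C}$.

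One technical point to tighten: your claim that tightness of $\{\mu_k^{(n)}\}$ forces the Nevanlinna masses $\rho_n(\mathbb{R})$ to stay bounded is not correct in general (these masses are tied to second-moment behaviour, which tightness does not control). What you actually need---and what you also state---is the uniform $o(|z|)$ estimate for $F_{\mu_k^{(n)}}(z)-z$ on Stolz cones; this \emph{does} follow from tightness alone (cf.\ \cite[Proposition~5.7]{BV}), and it is the estimate that drives the uniform invertibility. The paper handles the analogous step differently: rather than Hurwitz, it uses the Lipschitz bound \eqref{lipschitz} for the inverse together with the explicit containment \eqref{setcontainment} coming from the Bloch-type Theorem~\ref{Bloch}, which gives a direct quantitative control on the domain of $(G^{(2n)})^{\langle -1\rangle}$ uniform over the tight family. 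Either route works in the scalar case; the paper's choice is dictated by the need for estimates that survive the passage to matrix levels.
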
  Our result  states that the free convolution operation may be linearized through the  $\mathcal{R}$-transform, provided that the algebra $\mathcal{B}$ is finite dimensional (counterexamples are provided if this condition is relaxed).  The bounded, scalar valued case was first proven in \cite{Vadd}.  The bounded, operator valued version was proved in \cite{V2}.  The unbounded, scalar-valued version of this result first appeared in \cite{BV}, although it was proven with a finite variance assumption in \cite{Maass}.

This paper is organized as follows.
 Section \eqref{Prelim} includes introductory information and preliminary lemmas for later use.
In section \eqref{OpIso}, we prove Theorem \eqref{unboundiso}, which states that the Cauchy transform associated to an affiliated operator $a$ encodes all of the information
on the von Neumann algebra $(\mathcal{B} \cup \{a\})''$, provided that it's values are known on a (surprisingly large) subset of the resolvent.
In section \eqref{LimitingO}, we prove Theorem \eqref{MainTheorem}, which shows that the Cauchy transforms of unbounded operators may be characterized completely as the limiting objects of the Cauchy transforms of bounded operators in an appropriate function theoretic topology.
Section \eqref{CounterE} includes an example, arising from matrices whose entries are Cauchy distributions, wherein the Cauchy transform encodes very little of the operator algebraic information on $(\mathcal{B} \cup \{a\})''$, if  only the restriction to the non-commutative upper-half plane is known.  This allows us to conclude that Theorem \eqref{unboundiso} is an optimal result.  Section \eqref{RTransform1} defines the $\mathcal{R}$-transform for finite dimensional $\mathcal{B}$, shows that random variables with tight scalar distribution functions have $R$-transforms with a common domain.  We also prove that the free convolution operation is linearized by these functions.  Section \eqref{RTransform2} provides an example wherein the $\mathcal{R}$-transform need not exist in any meaningful sense for $\mathcal{B}$ infinite dimensional.

\begin{acknowledgements}
Part of the computations found in section \eqref{CounterE} were done during joint investigations of the Cauchy distribution with Yoann Dabrowski in Lyon.  I am grateful to Yoann for the accommodation.  The author also worked on this project during a research visit to Universit\'e Toulouse III, and is grateful towards Serban Belinschi and Charles Bordencave for their hospitality.  I am grateful towards the referee for excellent advice and insights.  Lastly, the author is funded through the Alexander von Humboldt Stiftung and gratefully acknowledges their support.
\end{acknowledgements}

\section{Preliminaries}\label{Prelim}
\subsection{Free Probabilistic Preliminaries}

We refer to \cite{Speichop} for an overview of operator-valued free probability.

Let $\mathcal{A}$ denote a C$^{\ast}$-algebra and $\mathcal{B} \subset \mathcal{A}$ a unital $\ast$-subalgebra, complete with a conditional expectation $E: \mathcal{A} \mapsto \mathcal{B}$ (that is, $E$ is unital, bimodular, completely positive map).
We refer to the triple $(\mathcal{A}, E, \mathcal{B})$ as an \textit{operator-valued probability space}.

Let $\mathcal{B}$ be algebraically free from a self adjoint symbol $X$ and denote the $\ast$-algebra that they generate as $\mathcal{B}\langle X \rangle$,
the \textit{ring of non-commutative polynomials}.
Let  $a \in \mathcal{A}^{sa}$ and define the \textit{B-valued  distribution } of $a$  to be the map
$$ \mu_{a} : \mathcal{B}\langle X \rangle \mapsto \mathcal{B} \ ; \ \ \mu_{a}(P(X)):= E(P(a)). $$

We define $\Sigma_{0}$ as the set of all maps $\mu: \mathcal{B}\langle X \rangle \mapsto \mathcal{B}$ such that
\begin{enumerate}
\item For all $b,b'\in \mathcal{B}$ and $P(X) \in \mathcal{B}\langle X\rangle$, we have that $\mu(bP(X)b') = b\mu(P(X))b'$.
\item For $P_{1}(X), P_{2}(X), \ldots , P_{k}(X) \in \mathcal{B}\langle X \rangle$ we have that $$\left[ \mu(P_{j}^{\ast}(X)P_{i}(X) \right]_{i,j=1}^{k} \geq 0.$$
\item\label{CBND} There exists an $M > 0$ such that for any $b_{1} , b_{2} , \ldots , b_{\ell} \in \mathcal{B}$, we have that $$\mu(b_{1}Xb_{2} \cdots Xb_{\ell}) \leq M^{\ell - 1} \| b_{1}\| \|b_{2}\| \cdots \|b_{\ell}\|.$$
\end{enumerate}
It was shown in \cite{PV} that $\mu \in \Sigma_{0}$ if and only if there is an operator valued probability space so that $\mu$ arises as the distribution of an element $a \in \mathcal{A}$.  The set of distributions so that \eqref{CBND} holds for a fixed $M > 0$  shall be referred to as
$\Sigma_{0,M}$.   The following proposition states that this set is compact in the \textit{pointwise weak topology}.

\begin{proposition}[\cite{WilJFA}]\label{compactnessBS}
Assume that $\mathcal{B}$ is a W$^{\ast}$-algebra.  The space $\Sigma_{0,M}$ is compact in the sense that for any  sequence $\{ \mu_{n}\}_{n \in \mathbb{N}} \subset \Sigma_{0,M}$ there exists a $\mu \in \Sigma_{0,M}$ and an increasing sequence $\{ i_{n} \}_{n \in \mathbb{N}} \subset \mathbb{N}$ such that, for any $P(X) \in \mathcal{B}\langle X \rangle$ we have that
$$ \mu_{i_{n}}(P(X)) \rightarrow \mu(P(X)) $$
in the weak topology.
\end{proposition}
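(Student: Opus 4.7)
The plan is to combine the uniform norm bound in axiom (3) with Banach--Alaoglu compactness of the unit ball of the W$^{*}$-algebra $\mathcal{B}$, then diagonalize over a countable collection of test monomials. By bimodularity (axiom (1)), every $\mu \in \Sigma_{0,M}$ is determined by its values on the alternating monomials
\[ m(b_{1}, \dots, b_{\ell-1}) := X b_{1} X b_{2} \cdots b_{\ell-1} X, \qquad \ell \geq 1,\ b_{i} \in \mathcal{B}, \]
since these generate $\mathcal{B}\langle X \rangle$ as a $\mathcal{B}$-bimodule (any monomial $b_{0} X^{n_{1}} b_{1} \cdots X^{n_{k}} b_{k}$ is such a product of these, pre- and post-multiplied by elements of $\mathcal{B}$). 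Axiom (3) then yields the a priori bound $\|\mu(m(b_{1}, \dots, b_{\ell-1}))\| \leq M^{\ell}\|b_{1}\| \cdots \|b_{\ell-1}\|$, so for each fixed tuple the values $\mu_{n}(m(b_{1}, \dots, b_{\ell-1}))$ lie in a norm-bounded, hence (by Banach--Alaoglu) weak$^{*}$-compact, subset of $\mathcal{B}$.

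To produce a single increasing sequence $\{i_{n}\}$ that works simultaneously for every test polynomial, I would carry out a diagonal extraction. Fix a countable family $\mathcal{F}$ of alternating monomials whose $\mathcal{B}$-bimodule span is norm-dense in each fixed-degree component of $\mathcal{B}\langle X \rangle$; if the predual $\mathcal{B}_{\ast}$ is not separable, first restrict attention to the separable W$^{*}$-subalgebra of $\mathcal{B}$ generated by the coefficients appearing in $\mathcal{F}$ together with the ranges of the $\mu_{n}$, on which the weak$^{*}$ topology on bounded sets is metrizable. A Cantor diagonal argument then extracts $\{i_{n}\}$ along which $\mu_{i_{n}}(P)$ converges weakly for every $P \in \mathcal{F}$; define $\mu(P)$ to be this limit, and invoke the uniform bound (3) to extend convergence by approximation at each fixed degree to all of $\mathcal{B}\langle X \rangle$.

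It remains to verify $\mu \in \Sigma_{0,M}$, which is routine: bimodularity passes to pointwise weak$^{*}$ limits because left and right multiplication by a fixed element of $\mathcal{B}$ is weak$^{*}$-continuous; the matrix positivity condition in (2) passes because the positive cone in $M_{k}(\mathcal{B})$ is weak$^{*}$-closed; and the $M$-bound passes by weak$^{*}$-lower semicontinuity of the norm. I expect the principal obstacle to be the sequential extraction step, since in a general W$^{*}$-algebra the unit ball is weak$^{*}$-compact but not necessarily sequentially compact; the separable-corner reduction above is what restores metrizability of the unit ball and hence sequential compactness. With that reduction in hand the diagonal argument is standard.
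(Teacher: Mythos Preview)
The paper does not prove this proposition; it is quoted from \cite{WilJFA}. Your overall strategy---reduce to alternating monomials via bimodularity, invoke the uniform bound together with Banach--Alaoglu, then diagonalize---is the natural one and is almost certainly what the cited reference does.

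There is, however, a real gap in your separable-corner reduction. You propose fixing a countable family $\mathcal{F}$ whose $\mathcal{B}$-bimodule span is norm-dense in each degree, but such an $\mathcal{F}$ exists only when $\mathcal{B}$ is norm-separable; and even after passing to the W$^{*}$-subalgebra generated by the coefficients and the values on $\mathcal{F}$, the conclusion demands convergence for \emph{every} $P(X)\in\mathcal{B}\langle X\rangle$, including those whose coefficients lie outside your separable corner, so the approximation step cannot reach them. In fact, without a separability hypothesis on $\mathcal{B}_{*}$ the sequential conclusion can fail outright: for $\mathcal{B}=\ell^{\infty}(2^{\mathbb{N}})$, the self-adjoint contractions $b_{n}(S)=\chi_{S}(n)$ admit no weak$^{*}$-convergent subsequence (given any subsequence $(n_{k})$, evaluate at $S=\{n_{1},n_{3},n_{5},\ldots\}$), so the Dirac distributions $\mu_{n}(P)=P(b_{n})$ in $\Sigma_{0,1}$ have no pointwise-weak limit along any subsequence, already on the single monomial $P=X$. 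The honest hypothesis is that $\mathcal{B}_{*}$ be separable; under it, bounded sets in $\mathcal{B}$ are weak$^{*}$-metrizable and your diagonal extraction goes through directly, with no corner reduction needed.
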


Given elements  $\{ a_{i} \}_{i \in I} \subset \mathcal{A}$, we say that these elements are $\mathcal{B}$-free if
\begin{equation}\label{freeness} E(P_{1}(a_{i_{1}})  P_{2}(a_{i_{2}}) \cdots P_{k}(a_{i_{k}})) = 0 \end{equation}
whenever $$E( P_{j}(a_{i_{j}}) ) = 0$$
for $j= 1, 2 , \ldots , k$ and $i_{1} \neq i_{2} , i_{2} \neq i_{3} , \ldots , i_{k-1} \neq i_{k}$ (we will hence force refer to these as \textit{alternating products}).  Given a family of $\ast$-subalgebras, $\{A_{i} \}_{i \in I} \subset \mathcal{A}$ we similarly define freeness as the property that alternating products of centered elements have expectation $0$. The property in equation \eqref{freeness} is equivalent to
the $\ast$-subalgebras generated by $\{ a_{i} , \mathcal{B} \} $ being $\mathcal{B}$-free.
If the base algebra $\mathcal{B}$ is simply $\mathbb{C}$, we say that these elements are \textit{free}.

The following is well know and could, according to the viewpoint, be considered the fundamental theorem of operator-valued free probability, insofar as it shows that the study of joint distributions of free random variables and, by extension, random matrices, can be conducted using operator-valued machinery.

\begin{lemma}\label{bslemma}
Let $(\mathcal{A} , \tau)$ denote a C$^{\ast}$ probability space and $A_{i}  \subset \mathcal{A}$ free subalgebras for $i \in I$.
Consider the operator valued probability space $(M_{n}(\mathcal{A}), \tau \otimes 1_{n} , M_{n}(\mathbb{C}) )$ so that $\mathcal{B} = M_{n}(\mathbb{C})$.
Then the subalgebras $\{ M_{n}(\mathcal{A}_{i}) \}_{i \in I}$ 
are $\mathcal{B}$-free with respect to the conditional expectation $\tau \otimes 1_{n}$.
\end{lemma}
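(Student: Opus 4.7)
The plan is to unpack the definitions of $\mathcal{B}$-freeness for $\mathcal{B} = M_n(\mathbb{C})$ and $E = \tau \otimes 1_n$, and reduce the identity to be checked to a sum of scalar-valued free-independence identities, one per tuple of matrix indices. Concretely, fix distinct consecutive indices $i_1 \neq i_2 \neq \cdots \neq i_k$ in $I$ and matrices
$$ X^{(j)} = \bigl[x^{(j)}_{p,q}\bigr]_{p,q=1}^{n} \in M_n(A_{i_j}), \qquad j = 1, \ldots, k, $$
with $(\tau \otimes 1_n)(X^{(j)}) = 0$. My goal is to show $(\tau \otimes 1_n)(X^{(1)}X^{(2)}\cdots X^{(k)}) = 0$.

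The first step is to observe that the hypothesis $(\tau \otimes 1_n)(X^{(j)}) = 0$ is the entrywise statement $\tau(x^{(j)}_{p,q}) = 0$ for every $p,q$. Next, I compute the $(p,q)$-entry of the product by the usual matrix multiplication rule and apply $\tau$ entrywise:
$$ \bigl[(\tau \otimes 1_n)(X^{(1)} \cdots X^{(k)})\bigr]_{p,q} = \sum_{p_1, \ldots, p_{k-1} = 1}^{n} \tau\bigl(x^{(1)}_{p,p_1}\, x^{(2)}_{p_1,p_2} \cdots x^{(k)}_{p_{k-1},q}\bigr). $$
Each summand on the right is $\tau$ applied to a product whose $j$-th factor lies in $A_{i_j}$ and has $\tau$-trace zero, with indices $i_1 \neq i_2 \neq \cdots \neq i_k$ alternating. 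By the scalar freeness of the family $\{A_i\}_{i \in I}$ in $(\mathcal{A}, \tau)$, each such summand vanishes, hence so does the whole entry, for every $(p,q)$. This gives the required alternating-product identity for $(M_n(\mathcal{A}), \tau \otimes 1_n, M_n(\mathbb{C}))$.

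It remains to reduce the definition of $M_n(\mathbb{C})$-freeness to precisely the identity established above. Since $M_n(\mathbb{C})$ sits inside each $M_n(A_i)$ as scalar matrices, the $\ast$-subalgebra of $M_n(\mathcal{A})$ generated by $M_n(A_i) \cup M_n(\mathbb{C})$ is just $M_n(A_i)$ itself; so $M_n(\mathbb{C})$-freeness of $\{M_n(A_i)\}_{i \in I}$ reduces to checking that alternating products of $(\tau \otimes 1_n)$-centered elements of the $M_n(A_i)$ have vanishing conditional expectation, which is exactly what has just been verified.

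The argument is essentially routine; the only thing to be a bit careful about is the bookkeeping that entrywise centering is equivalent to matrix-level centering under $\tau \otimes 1_n$, and that alternation of the indices $i_j$ is preserved under the entrywise decomposition, so that the scalar freeness hypothesis may be invoked summand by summand. No deeper obstacle arises.
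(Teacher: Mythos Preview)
Your proof is correct and follows essentially the same approach as the paper's own proof: both reduce the $M_n(\mathbb{C})$-centering condition to entrywise $\tau$-centering, expand the $(p,q)$-entry of the product as a sum over intermediate indices, and invoke scalar freeness term by term. Your additional remark that $M_n(\mathbb{C}) \subset M_n(A_i)$ so no extra generators arise is a small clarification the paper leaves implicit, but otherwise the arguments are the same.
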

\begin{proof}
Let $A_{p} = (a_{\ell,m}^{(p)} )_{\ell, m =1}^{n}  \in M_{n}(\mathcal{A}_{i_{p}})$ for $p = 1 , \ldots , k$ .
Assume that \begin{equation}\label{bo}
\tau \otimes1_{n} (A_{p}) = 0
\end{equation}
 for $p = 1, \ldots , k$ and $i_{j} \neq i_{j + 1}$ for $j = 1, \ldots , k-1$. 
Note that $\eqref{bo}$ implies that
$$ \tau(a_{\ell,m}^{(p)}) = 0 $$ for all $p = 1, \ldots , k$ and $\ell , m = 1, \ldots , n$.
Restricting to a single entry, note that
$$(A_{1} A_{2} \cdots A_{k})_{r,s} = \sum_{m_{1} , \ldots , m_{k-1}=1}^{n} a^{(1)}_{r, m_{1}} a^{(2)}_{m_{1}, m_{2}} \cdots a^{(k)}_{m_{k-1}, s}$$
However, $$ \tau\left( a^{(1)}_{r, m_{1}} a^{(2)}_{m_{1}, m_{2}} \cdots a^{(k)}_{m_{k-1}, s} \right) = 0$$
since this is an alternating product of trace $0$ elements that are free.  We conclude that
$$ \tau \otimes 1_{n} (A_{1} A_{2} \cdots A_{k}) = 0_{n} $$
proving $\mathcal{B}$-freeness.
\end{proof}

Consider $\mathcal{B}$-free elements  $a_{1} , a_{2} \in \mathcal{B}$ with distributions $\mu_{1}$ and $\mu_{2}$, define the \textit{free convolution} of these operators, $\mu_{1} \boxplus \mu_{2}$, to be the distribution of the element $a_{1} + a_{2}$.

We study these objects through their function theory.  Indeed, let $a \in \mathcal{A}^{sa}$.
For every $n$ let $ M^{+}_{n}(\mathcal{B})$ denote the set of $b \in M_{n}(\mathcal{B})$ such that there exists an $\epsilon > 0$ so that $\Im(b) > \epsilon 1_{n}$.
Please note that this is hardly universal notation as $\mathcal{A}^{+}$ often denotes the positive cone of an operator algebra.

We define the \textit{Cauchy transform} of $a$ to be the non-commutative function
$$G_{a} = \{ G^{(n)}_{a} \} : \sqcup_{n=1}^{\infty} M_{n}^{+}(\mathcal{B}) \mapsto  \sqcup_{n=1}^{\infty} M_{n}^{-}(\mathcal{B})    \ ; \ \ G^{(n)}(b) := E[(b-a\otimes1_{n})^{-1}] $$
Note that this satisfies the properties that, for every $b \in M_{n}^{+}(\mathcal{B})$, $b' \in M_{n}^{+}(\mathcal{B})$ and
$S \in M_{n}(\mathbb{C})$, we have
$$ G^{(n+m)}(b \oplus b') = G^{(n)}(b ) \oplus G^{(m)}( b') \ ; \ \ G^{(n)}(SbS^{-1}) = SG^{(n)}(b)S^{-1}.$$
Thus, this is an example of a \textit{non-commutative function} and we refer to \cite{KVV} for an introduction to this theory. Moreover, this is an analytic function and we refer to \cite{Wil6} for an introduction to the analytic aspects of these functions.
We note here that this function extends to the entire resolvent, a fact that will become important in the coming sections.

The following is well known and simply states that moments may be recovered from the Cauchy transform.
\begin{lemma}\label{momentlemma}
Let $X_{1} , X_{2} \in \mathcal{A}$ .  Then the joint, $\mathcal{B}$-valued distribution of $X_{1}$ and $X_{2}$ can be recovered from the non-commutative function
$$ g^{(n)}(B) :=  B \in M^{+}_{2n}(\mathcal{B}) \mapsto E_{2n}\left[ \left( B - \left( \begin{array}  {cc}
  X_{1} & 0  \\
0  & X_{2}
   \end{array} \right) \otimes 1_{n} \right)^{-1} \right]. $$
\end{lemma}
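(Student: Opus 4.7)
The plan is to reduce the statement to two well-established reductions. Set
$$
Y \;:=\; \begin{pmatrix} X_{1} & 0 \\ 0 & X_{2} \end{pmatrix} \;\in\; M_{2}(\mathcal{A})^{sa},
$$
and view it as a self-adjoint random variable in the operator-valued probability space $(M_{2}(\mathcal{A}),\, E \otimes \mathrm{id}_{M_{2}(\mathbb{C})},\, M_{2}(\mathcal{B}))$. Then $g^{(n)}(B) = G_{Y}^{(n)}(B)$ is precisely the $M_{2}(\mathcal{B})$-valued Cauchy transform of $Y$. The proof splits into two steps: (A) $G_{Y}$ determines the $M_{2}(\mathcal{B})$-valued distribution $\mu_{Y}$ of $Y$; (B) $\mu_{Y}$ determines the joint $\mathcal{B}$-valued distribution of $X_{1}, X_{2}$.

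For step (A), I would expand the resolvent as a Neumann series on the subset where $B \in M_{2n}^{+}(\mathcal{B})$ has sufficiently large imaginary part,
$$
G_{Y}^{(n)}(B) \;=\; \sum_{k=0}^{\infty} E_{2n}\bigl[\, B^{-1} \bigl((Y\otimes 1_{n}) B^{-1}\bigr)^{k} \,\bigr].
$$
Substituting $B \mapsto tB$ and matching the asymptotic expansion in $t^{-1}$ extracts each summand from the function $G_{Y}^{(n)}$. A standard polarization over the slots (treating each occurrence of $B^{-1}$ as an independent argument via polynomial families in a complex parameter and invoking analytic continuation of non-commutative functions in the sense of \cite{KVV}) then recovers every mixed multilinear moment
$$
E_{2n}\bigl[ A_{0}(Y\otimes 1_{n}) A_{1} (Y\otimes 1_{n}) \cdots (Y\otimes 1_{n}) A_{k} \bigr], \qquad A_{j} \in M_{2n}(\mathcal{B}),
$$
which by definition constitutes the full $M_{2}(\mathcal{B})$-valued distribution of $Y$ (the $n$-fold amplifications being accounted for by the usual tensoring with $M_{n}(\mathbb{C})$).

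For step (B), the block-diagonal form of $Y$ lets us realize any scalar word in $X_{1}, X_{2}, \mathcal{B}$ as a single matrix entry of an $M_{2}(\mathcal{B})$-valued word in $Y$. Given indices $i_{1},\ldots,i_{k} \in \{1,2\}$ and coefficients $b_{0},\ldots,b_{k} \in \mathcal{B}$, declare $i_{0} := 1 =: i_{k+1}$ and set
$$
\widetilde{b}_{j} \;:=\; b_{j} \otimes e_{i_{j},\, i_{j+1}} \;\in\; M_{2}(\mathcal{B}), \qquad 0 \le j \le k.
$$
Using $Y \cdot (b \otimes e_{i,j}) = X_{i} b \otimes e_{i,j}$ together with the matrix-unit relation $e_{p,q}\, e_{q,r} = e_{p,r}$, a direct induction gives
$$
\widetilde{b}_{0}\, Y\, \widetilde{b}_{1}\, Y\, \cdots\, Y\, \widetilde{b}_{k} \;=\; \bigl(b_{0} X_{i_{1}} b_{1} X_{i_{2}} \cdots X_{i_{k}} b_{k}\bigr) \otimes e_{1,1}.
$$
Applying $E \otimes \mathrm{id}_{M_{2}}$ and reading off the $(1,1)$-entry extracts the scalar moment $E(b_{0} X_{i_{1}} b_{1} X_{i_{2}} \cdots X_{i_{k}} b_{k})$. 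Since mixed moments of this form constitute the joint $\mathcal{B}$-valued distribution of $X_{1}, X_{2}$, the reduction is complete.

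The main obstacle is the analytic bookkeeping in step (A): the Neumann expansion is valid only on an open subset of $M_{2n}^{+}(\mathcal{B})$, so one must argue carefully that the multilinear coefficients are unambiguously recovered from the non-commutative function $g$ and that they agree with the moment data of $\mu_{Y}$. Once this is in place, step (B) is a purely combinatorial matrix-unit routing argument.
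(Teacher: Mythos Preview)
The paper states this lemma as ``well known'' and does not supply a proof, so there is nothing to compare against directly. Your two-step outline is the standard reduction and is correct. Step~(B) is a clean matrix-unit routing argument and works exactly as you wrote it: the induction on the word length is immediate once you observe $Y\,(b\otimes e_{i,j}) = X_{i}b\otimes e_{i,j}$.

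For step~(A), the argument you sketch is right, but the ``polarization'' remark deserves one more sentence of precision, since this is where people sometimes slip. The cleanest way to phrase it is via the non-commutative Taylor--Taylor expansion at a scalar point (say at $\infty$, i.e.\ after the substitution $B\mapsto B^{-1}$): for a bounded self-adjoint $Y$, the function $G_{Y}$ is a genuine nc-function in the sense of \cite{KVV}, and its $k$th Taylor--Taylor coefficient at a scalar centre is exactly the $k$-linear moment map $(A_{1},\ldots,A_{k})\mapsto E_{2n}[A_{0}YA_{1}Y\cdots YA_{k}]$ (up to the fixed $A_{0}$). The point is that knowing $G_{Y}^{(n)}$ for \emph{all} $n$ is what lets you separate the slots---this is the ``fully matricial'' mechanism, not scalar polarization alone. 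Your phrase ``polarization over the slots \ldots\ via polynomial families'' gestures at this but would benefit from saying explicitly that the amplification over $n$ is doing the work. Alternatively, one can avoid nc-function machinery entirely by taking $B$ block upper-triangular with $b_{0},\ldots,b_{k}$ on successive superdiagonals and reading off the corner entry of the Neumann series; this is the older Voiculescu trick and is perhaps more self-contained.

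Either route closes the gap you flagged as the ``main obstacle,'' and the lemma follows.
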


We define the $\mathcal{R}$-transform to be the function
$$ \mathcal{R}_{a}^{(n)}(b):= (G^{(n)}_{a})^{\langle -1 \rangle}(b) - b^{-1} $$
where the $\langle-1\rangle$ superscript refers to the composition inverse.  In this setting, these functions are always defined
for $\|b^{-1}\|$ small enough, whereas the domain of this function will be much more delicate when we begin studying unbounded operators.  The key property of these functions is that, for $\mathcal{B}$-free random variables $a_{1} , a_{2} \in \mathcal{A}^{sa}$,
$$ \mathcal{R}_{a_{1} + a_{2}} = \mathcal{R}_{a_{1} } + \mathcal{R}_{ a_{2}} .$$

\subsection{Operator Algebraic Preliminaries}

We refer to \cite{KRe, KR} for an introduction to the theory of operator algebras.
Let $\mathcal{A}$ denote a von Neumann algebra with trace $\tau$.  We say that an closed, unbounded operator $X$ is \textit{affiliated} with $\mathcal{A}$ if, for every $U \in \mathcal{A}'$, 
we have that $UX= XU$, with agreement of domains.
The theory of affiliated operators was initiated in \cite{MVN} where, crucially, it is was shown that the set affiliated operators  (which naturally contains $\mathcal{A}$ itself), denoted $\tilde{\mathcal{A}}$, forms a $\ast$-algebra.

Thus, \textbf{for the remainder of the paper}, we will assume that our $\mathcal{B}$-valued probability space $(\mathcal{A}, E, \mathcal{B})$
has the property that $\mathcal{B} \subset \mathcal{A}$ is a containment or W$^{\ast}$-algebras and there exists a trace $$ \tau: \mathcal{A} \mapsto \mathbb{C}$$
so that  the conditional expectation $E$ is $\tau$ preserving.

Given a normal operator $X \in \tilde{\mathcal{A}}$ we refer to $e_{X}$ as \textit{the spectral measure}  associated to $X$.
The normal affiliated operators may be alternatively  characterized by the property that   $e_{X}(\omega) \in \mathcal{A}$
for all Borel sets $\omega \subset \mathbb{C}$.

The following is probably well known but the author cannot find a reference.
\begin{lemma}\label{affiliation}
Assume that $T \in \mathcal{A}^{sa}$ satisfies $\ker{(T}) = \{0\}$.  Then $T^{-1} \in \tilde{\mathcal{A}}^{sa}$.
\end{lemma}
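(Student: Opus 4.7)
The plan is to build $T^{-1}$ via the Borel functional calculus of $T$ and then read off affiliation from the fact that the spectral projections of $T$ lie in $\mathcal{A}$.

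First I would invoke the spectral theorem for the bounded self-adjoint element $T \in \mathcal{A}^{sa}$, writing $T = \int_{-\|T\|}^{\|T\|} \lambda \, dE_{T}(\lambda)$. Since $T \in \mathcal{A}$, every spectral projection $E_{T}(\omega)$, for $\omega \subset \mathbb{R}$ Borel, belongs to $\mathcal{A}$. The hypothesis $\ker(T) = \{0\}$ translates to $E_{T}(\{0\}) = 0$, so the function $f(\lambda) = 1/\lambda$ is defined $E_{T}$-almost everywhere on $\mathbb{R}$.

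Next I would define
$$ S := \int_{\mathbb{R} \setminus \{0\}} \lambda^{-1} \, dE_{T}(\lambda) $$
using the unbounded Borel functional calculus, with natural domain
$$ \mathcal{D}(S) = \Bigl\{ \xi \in H : \int_{\mathbb{R} \setminus \{0\}} |\lambda|^{-2} \, d\langle E_{T}(\lambda)\xi, \xi \rangle < \infty \Bigr\}. $$
Standard facts from spectral theory give that $S$ is closed, densely defined, and self-adjoint. The key identity to verify is that $S$ is the operator-theoretic inverse of $T$: for every $\xi \in H$ the vector $T\xi$ lies in $\mathcal{D}(S)$ and $ST\xi = \xi$, while for every $\xi \in \mathcal{D}(S)$ one has $TS\xi = \xi$; both follow from multiplying $\lambda$ and $\lambda^{-1}$ inside the functional calculus and using $E_{T}(\{0\}) = 0$.

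The remaining step, and the one that genuinely needs the statement of the lemma, is affiliation. For any unitary $U \in \mathcal{A}'$, the identity $UT = TU$ yields $U E_{T}(\omega) = E_{T}(\omega) U$ for every Borel $\omega$, because $E_{T}(\omega) \in (T)'' \subset \mathcal{A}$. Since the domain $\mathcal{D}(S)$ is characterized purely in terms of the spectral measure of $T$, it is $U$-invariant, and by passing $U$ through the spectral integral one obtains $US\xi = SU\xi$ for $\xi \in \mathcal{D}(S)$. Thus $S$ commutes with every element of $\mathcal{A}'$, so $S = T^{-1} \in \tilde{\mathcal{A}}^{sa}$. The main (minor) obstacle is just the careful bookkeeping of domains in the commutation step; everything else is a direct application of the functional calculus.
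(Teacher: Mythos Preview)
Your argument is correct and complete: the unbounded Borel functional calculus applied to $f(\lambda)=\lambda^{-1}$ (legitimate because $E_T(\{0\})=0$) immediately delivers a closed, densely defined, self-adjoint operator $S$ with $ST=TS=I$ on the appropriate domains, and affiliation follows since every $E_T(\omega)$ lies in $\{T\}''\subset\mathcal{A}$ and hence commutes with $\mathcal{A}'$.

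This is, however, a genuinely different route from the paper's. The paper does \emph{not} invoke the functional calculus at all; instead it builds $T^{-1}$ by hand on the range $T\mathcal{H}$, checks closedness via a direct graph argument, verifies affiliation by the one-line computation $UT^{-1}U^{*}T\xi=\xi$, and then establishes self-adjointness by first observing symmetry and then showing that $T^{-1}\pm i$ has dense range (exhibiting an explicit preimage $\xi=-iT(T-i)^{-1}\eta$). Your approach is shorter and more conceptual, with self-adjointness coming for free from the spectral machinery; the paper's approach is more elementary in that it avoids the unbounded functional calculus entirely and makes every domain manipulation explicit, at the cost of a longer verification of self-adjointness.
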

\begin{proof}
Let $\mathcal{H} = L^{2}(\mathcal{A}, \tau)$.  Define $$ T^{-1} : T\mathcal{H} \mapsto \mathcal{H} \ , \ \ T^{-1}T\xi := \xi. $$
By Proposition 2.5.13 in \cite{KRe}, $T^{-1}$ is a densely defined operator.
We show that it is closed.

Consider a convergent sequence in the graph $$( \eta_{n}, T^{-1}\eta_{n} ) = (T\xi_{n} , \xi_{n})  \rightarrow \{ \eta, \omega \} \in \mathcal{H} \times \mathcal{H}$$
where $\eta_{n} = T\xi_{n}$ for some $\xi_{n} \in \mathcal{H}$.
In particular, $\xi_{n} \rightarrow \omega$ so, by continuity of $T$, $T\xi_{n} \rightarrow T\omega$ and we may conclude that $\eta = T\omega$.  Thus, $\eta$ is in the domain of $T^{-1}$ and $T^{-1}\eta = \omega$.
This proves that $T^{-1}$ is a closed operator.

To show that $T^{-1} \in \tilde{\mathcal{A}}$, let $U \in \mathcal{A}'$.
For $T\xi$ in the domain of $T^{-1}$, note that $$UT^{-1}U^{\ast}T\xi  = UT^{-1}TU^{\ast}\xi = UU^{\ast}\xi = \xi = T^{-1}T\xi$$
proving affiliation.

Note  that $T^{-1}$ is a symmetric operator.  Indeed, for $x = T\xi$ and $y = T\eta$ in the domain of $T^{-1}$, we have that
$$ \langle T^{-1}x , y \rangle  = \langle \xi , T\eta \rangle = \langle T\xi  , \eta \rangle = \langle x , T^{-1}y \rangle$$
where we utilized self-adjointness of $T$.

By Proposition 2.7.10 in \cite{KRe},  self-adjointness of $T^{-1}$  will follow if we can show that $T^{-1} \pm i1$ has dense range.
Let $\eta \in \mathcal{H}$ and define $$\xi = -iT(T - i)^{-1}\eta.$$
Note that $\xi$ is in the domain of $T^{-1}$ since $i$ is in the resolvent of $T$ by self-adjointness.
Moreover, as the affiliated operators form an algebra, we have that
$$ $$
$$ (T^{-1} + i) \xi = (T^{-1} + i)[-iT(T- i)^{-1}]\eta = [(T - i ) (iT^{-1})][-iT(T- i)^{-1}]\eta = \eta.$$
This proves density of the range.  Dense range of $(T^{-1} - i) $ follows similarly.
\end{proof}

\subsection{Resolvent Sets}

Let $a \in \tilde{\mathcal{A}}$.  The \textit{resolvent set} of $a$ with respect to a subalgebra $\mathcal{B}$ is defined as the set of $b \in \mathcal{B}$ such that $(a - b)^{-1} \in \mathcal{A}$.

\begin{lemma}\label{affresolvent}
Assume that $a \in \tilde{ \mathcal{A}}^{sa}$ and $b \in \mathcal{A}^{sa}$ satisfies $|b| > \epsilon1$ for some $\epsilon > 0$.
Then $(a + ib)^{-1} \in \mathcal{A}$ and
 \begin{equation}\label{inverseineq1}
\| (a + ib)^{-1} \| \leq 1/\epsilon.
\end{equation}
Further, we have that $a(a + ib)^{-1} \in \mathcal{A}$ and satisfies the inequality
 \begin{equation}\label{inverseineq2}
\|a(a + ib)^{-1} \| \leq 1 + \|b\|/\epsilon.
\end{equation}
Moreover, if $b > \epsilon1$, then we may also conclude that $\Im{( (a + ib)^{-1})} <0$.
\end{lemma}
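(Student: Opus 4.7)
The plan is to verify the norm inequalities by bounding $a + ib$ from below on $\mathrm{dom}(a)$, then invoke a standard adjoint duality argument for surjectivity, and derive the remaining consequences algebraically.

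For inequality \eqref{inverseineq1}, I would reduce to the case $b \geq \epsilon 1$ (the case $b \leq -\epsilon 1$ follows by replacing $b$ with $-b$). For any $\xi \in \mathrm{dom}(a)$,
\begin{equation*}
\Im \langle (a+ib)\xi,\xi\rangle \,=\, \langle b\xi,\xi\rangle \,\geq\, \epsilon\|\xi\|^{2},
\end{equation*}
so Cauchy--Schwarz forces $\|(a+ib)\xi\| \geq \epsilon \|\xi\|$. Hence $a+ib$ is injective with closed range. Since $b$ is bounded, $(a+ib)^{*} = a - ib$ on the same domain, and the identical bound applied with $-b$ shows $a - ib$ is injective, so the range of $a+ib$ is dense. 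Combined with closed range, $a+ib$ is surjective, and $R := (a+ib)^{-1}$ is a bounded operator on $\mathcal{H}$ with $\|R\| \leq 1/\epsilon$.

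To place $R$ in $\mathcal{A}$, I would observe that $a+ib$ is affiliated with $\mathcal{A}$: for $U \in \mathcal{A}'$ and $\xi \in \mathrm{dom}(a)$, $U(a+ib)\xi = (a+ib)U\xi$ follows from affiliation of $a$ and membership of $b$ in $\mathcal{A}$. Substituting $\xi = R\eta$ yields $UR = RU$ for every $U \in \mathcal{A}'$, so $R \in \mathcal{A}'' = \mathcal{A}$ by the bicommutant theorem. Inequality \eqref{inverseineq2} is then immediate from the algebraic identity
\begin{equation*}
a\,(a+ib)^{-1} \,=\, \bigl((a+ib) - ib\bigr)(a+ib)^{-1} \,=\, 1 - ib\,(a+ib)^{-1},
\end{equation*}
which shows membership in $\mathcal{A}$ and gives the estimate $\|a(a+ib)^{-1}\| \leq 1 + \|b\|/\epsilon$.

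For the final assertion when $b > \epsilon 1$, the identity $R^{-1} - (R^{*})^{-1} = (a+ib) - (a-ib) = 2ib$, multiplied on the left by $R$ and on the right by $R^{*}$, gives
\begin{equation*}
\Im(R) \,=\, \frac{R - R^{*}}{2i} \,=\, -R\,b\,R^{*}.
\end{equation*}
For any nonzero $\xi \in \mathcal{H}$, set $\eta = R^{*}\xi$; since $R$, and hence $R^{*}$, is bijective, $\eta \neq 0$, whence $\langle \Im(R)\xi,\xi\rangle = -\langle b\eta,\eta\rangle \leq -\epsilon\|\eta\|^{2} < 0$, proving $\Im(R) < 0$. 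The main technical hurdle is justifying $(a+ib)^{*} = a - ib$ for an unbounded self-adjoint $a$ perturbed by a bounded self-adjoint operator; this is standard in the theory of unbounded operators but must be invoked with care about domain equality. A secondary point is the affiliation argument that promotes $R$ from a bounded operator on $\mathcal{H}$ to an element of $\mathcal{A}$, which is essentially the same bicommutant manipulation used in Lemma~\ref{affiliation}.
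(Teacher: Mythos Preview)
Your reduction ``to the case $b \geq \epsilon 1$'' is not a reduction at all: the hypothesis $|b| > \epsilon 1$ (i.e.\ $\sigma(b) \cap [-\epsilon,\epsilon] = \varnothing$) does \emph{not} force $b$ to be definite, and in the indefinite case the lemma is actually false. For instance, with $a = \left(\begin{smallmatrix}0&1\\1&0\end{smallmatrix}\right)$ and $b = \left(\begin{smallmatrix}1&0\\0&-1\end{smallmatrix}\right)$ one has $|b| = 1$ but $a + ib = \left(\begin{smallmatrix}i&1\\1&-i\end{smallmatrix}\right)$ is singular. So the only honest statement is the one you prove, namely $b > \epsilon 1$ or $b < -\epsilon 1$; fortunately this is the only case ever invoked in the paper. (The paper's own proof stumbles at the same spot: the claimed inclusion $\sigma(a_{N} + ib) \subset \sigma(a_{N}) + i\sigma(b)$ is false for non-commuting bounded self-adjoints, as the same $2\times 2$ example shows.)

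Once the hypothesis is read as ``$b$ definite,'' your argument is correct and takes a genuinely different route from the paper. The paper truncates $a$ to bounded $a_{N} = a\,e_{a}([-N,N])$, produces a WOT cluster point $S$ of $(a_{N}+ib)^{-1}$, and then checks $S(a+ib) = (a+ib)S = 1$ on the dense union $\bigcup_{N} p_{N}L^{2}$. Your approach bypasses all of this: the numerical-range bound $\Im\langle(a+ib)\xi,\xi\rangle \geq \epsilon\|\xi\|^{2}$ gives injectivity and closed range directly, and the adjoint identity $(a+ib)^{*} = a - ib$ (a bounded perturbation of a self-adjoint operator) gives dense range. This is cleaner and avoids the spectral-inclusion pitfall. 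For the second inequality you and the paper use the same algebraic identity $a(a+ib)^{-1} = 1 - ib(a+ib)^{-1}$. For the final assertion the paper conjugates by $b^{-1/2}$ to reduce to the normal operator $i + b^{-1/2}ab^{-1/2}$, whereas your resolvent identity $\Im(R) = -R\,b\,R^{*}$ together with bijectivity of $R^{*}$ is again more direct and equally valid.
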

\begin{proof}

We first show that $(a + ib)^{-1} \in \mathcal{A}$ and satisfies \eqref{inverseineq1}.
Let $e_{a}$ denote the spectral measure associated to $a$ and define $$p_{N} := e_{a}([-N,N]) \ ; \ \ a_{N} := ap_{N}.$$
Letting $\sigma(\cdot)$ denote the spectrum of an operator, we have that  $$\sigma(a_{N} + ib) \subset \sigma(a_{N}) + i\sigma(b) \subset [-N,N] \times i \left( \mathbb{R}\setminus[-\epsilon,\epsilon]\right)$$ (it is unclear if an analogous fact is true for unbounded operators, hence the proof).
In particular, $(a_{N} + ib)^{-1} \in \mathcal{A}$.
Using the spectral mapping theorem, we have that 
$$ \sigma( (a_{N} + ib)^{-1}) = \{ \lambda^{-1} : \lambda \in \sigma( a_{N} + ib) \} \subset B_{1/\epsilon}(\{ 0 \}).$$
We may assume that $(a_{N} + ib)^{-1}$ converges in the weak operator topology to a cluster point $S \in \mathcal{A}$ with $\| S\| < 1/\epsilon$.  The norm bound follows from agreement of the norm and spectral radius in C$^{\ast}$-algebras.

Now, $\tilde{\mathcal{A}}$ is an algebra so that it contains $S(a+ib)$.  Taking the standard representation on $L^{2}(\mathcal{A},\tau)$,
consider the dense subset $\{ p_{N} L^{2}(\mathcal{A},\tau) \}_{N \in \mathbb{N}}$, and note that these are in the domain of the product $S(a+ib)$.
For $\xi, \eta \in L^{2}(\mathcal{A})$, we have that, for any $M \geq N$,
\begin{equation}\label{Sequal}
  (a+ ib) p_{N}\xi  = (a_{M}+ ib) p_{N}\xi 
\end{equation}
Thus, \begin{align*} \langle S(a+ ib) p_{N}\xi , \eta \rangle & = \lim_{M \uparrow \infty}  \langle (a_{M} + ib)^{-1}(a+ ib) p_{N}\xi , \eta \rangle \\
& = \lim_{M \uparrow \infty}  \langle (a_{M} + ib)^{-1}(a_{M}+ ib) p_{N}\xi , \eta \rangle  \\ 
& = \langle p_{N}\xi , \eta \rangle.
\end{align*}
and we may conclude that
\begin{equation}
 S(a+ ib)p_{N}\xi = p_{N}\xi
\end{equation}
for all $N \in \mathbb{N}$ and $\xi \in L^{2}(\mathcal{A})$.

Now, let $(x,y)$ denote any element in the graph of $S(a+ ib)$.  Observe that
$(p_{N}x,p_{N}x)$ is in the graph of this operator and converges to $(x,x)$ in the $L^{2}$ norm on the product.  We conclude that $x=y$.
Thus, $S(a+ ib) = 1$.  By similar methods, one may show that $$ \langle (a+ib)S\eta , P_{N}\xi \rangle = \langle \eta , P_{N}\xi \rangle $$
for all $\eta \in \mathcal{D}( (a+ib)S)$ , $\xi \in L^{2}(\mathcal{A})$ and $N \in \mathbb{N}$.  By the same method, we conclude that
$(a+ib)S = 1$ and our result follows.


Regarding the second part of the statement, note that since, $\tilde{\mathcal{A}}$ is an algebra, we have that
$a(a + ib)^{-1} \in \tilde{\mathcal{A}}$.  Moreover,
$$ a(a + ib)^{-1} = (a + ib)(a + ib)^{-1} - ib(a + ib)^{-1}= 1 - ib(a + ib)^{-1}$$
which implies that $a(a + ib)^{-1} \in \mathcal{A}$ and maintains bound \eqref{inverseineq2}.

For $b > \epsilon1$, 
we have that
$$ (a + ib)^{-1} = b^{-1/2}(i + b^{-1/2}ab^{-1/2})^{-1}b^{-1/2}. $$
The last statement follows by applying the spectral theorem for the normal operator $ b^{-1/2}ab^{-1/2}$.
\end{proof}

As a (well known) corollary, we have that for $a \in \tilde{\mathcal{A}}^{sa}$, 
we have that the resolvent set of $a$ contains $\mathcal{B}^{+} \sqcup \mathcal{B}^{-}$.

\begin{lemma}\label{expandedresdef}
Let  $b_{1} , b_{3} , \ldots , b_{2n-1} \in \mathcal{B}^{+}$, $b_{2} , b_{4} , \ldots , b_{2n} \in \mathcal{B}^{-}$
and define  $r >  0$ to be the largest constant such that $$ r \leq |\Im{(b_{i})}|$$
for $i = 1, 2, \ldots , 2n$.
Then the open ball of radius $r$ about the point
$$ b = \left[\begin{array}  {ccccc}
  b_{1} & 0  & 0 & \cdots & 0 \\
0 & b_{2}  & 0 & \cdots & 0 \\
0 & 0  & b_{3} & \cdots & 0 \\
\ & \vdots & \ & \vdots & \ \\
0 & 0  & 0 & \cdots & b_{2n}
   \end{array} \right]   $$
is contained in the resolvent set of $a\otimes 1_{2n}$ for all
 $a \in \tilde{\mathcal{A}}^{sa}$.
\end{lemma}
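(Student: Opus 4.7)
The plan is to first show that the center point $b$ itself lies in the resolvent set of $a \otimes 1_{2n}$ with an explicit operator norm bound on the inverse, and then to extend to the entire ball of radius $r$ via a Neumann series perturbation argument.

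For the first step, I would decompose
$$a \otimes 1_{2n} - b = (a \otimes 1_{2n} - \Re b) + i(-\Im b).$$
Here $a \otimes 1_{2n} - \Re b \in \widetilde{M_{2n}(\mathcal{A})}^{sa}$ (since $a \in \tilde{\mathcal{A}}^{sa}$ and $\Re b$ is a bounded self-adjoint element of $M_{2n}(\mathcal{B})$), while $-\Im b = \mathrm{diag}(-\Im b_1, \ldots, -\Im b_{2n})$ is a bounded self-adjoint element of $M_{2n}(\mathcal{B}) \subset M_{2n}(\mathcal{A})$. By definition of $r$ we have $|\Im b_i| \geq r \cdot 1$ in $\mathcal{B}$ for every $i$, and since the absolute value of a block-diagonal self-adjoint operator is computed block-wise, $|\Im b| \geq r \cdot 1_{2n}$. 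Then Lemma \ref{affresolvent}, applied in $(M_{2n}(\mathcal{A}), \tau \otimes 1_{2n})$ with any $\epsilon < r$, yields $(a \otimes 1_{2n} - b)^{-1} \in M_{2n}(\mathcal{A})$ with $\|(a \otimes 1_{2n} - b)^{-1}\| \leq 1/\epsilon$; letting $\epsilon \uparrow r$ gives
$$\|(a \otimes 1_{2n} - b)^{-1}\| \leq 1/r.$$

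For the second step, let $c \in M_{2n}(\mathcal{B})$ with $\|c - b\| < r$ and factor
$$a \otimes 1_{2n} - c = (a \otimes 1_{2n} - b)\bigl[1 - (a \otimes 1_{2n} - b)^{-1}(c - b)\bigr].$$
The operator $(a \otimes 1_{2n} - b)^{-1}(c - b)$ is a product of elements of $M_{2n}(\mathcal{A})$, and the norm bound from step one gives $\|(a \otimes 1_{2n} - b)^{-1}(c-b)\| \leq \|c-b\|/r < 1$. The Neumann series therefore inverts the bracketed factor inside $M_{2n}(\mathcal{A})$, so composing with $(a\otimes 1_{2n} - b)^{-1}$ produces $(a \otimes 1_{2n} - c)^{-1} \in M_{2n}(\mathcal{A})$, which is the definition of $c$ being in the resolvent set.

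The only subtlety worth flagging is that the $b_i$ alternate between $\mathcal{B}^+$ and $\mathcal{B}^-$, so $\Im b$ is not definite; this is precisely why I lean on the hypothesis of Lemma \ref{affresolvent} as $|b'| > \epsilon 1$ rather than a one-sided positivity condition, and why the absolute value taken block-wise is essential to obtaining the single scalar bound $r$. Beyond this, the argument is a direct reduction to Lemma \ref{affresolvent} plus a standard perturbation, so I do not anticipate any further obstacles.
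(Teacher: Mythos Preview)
Your proposal is correct and follows essentially the same two-step approach as the paper: establish invertibility of $(a\otimes 1_{2n}-b)$ with the bound $\|(a\otimes 1_{2n}-b)^{-1}\|\le 1/r$ via Lemma~\ref{affresolvent}, then perturb by a Neumann series. The only cosmetic difference is that the paper invokes Lemma~\ref{affresolvent} entry-wise on the diagonal blocks $(a-b_i)$, whereas you apply it once at the level of $M_{2n}(\mathcal{A})$; both are valid and yield the same bound.
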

\begin{proof}
Let $b' \in M_{2n}(\mathcal{B})$ satisfy $\|b'\| < r$.
Then, \begin{equation}\label{elmo} a - (b + b') = (a-b)[1 - (a-b)^{-1}b')] \end{equation}
where $(a-b)$ is invertible by entry-wise application of \eqref{affresolvent}.
The right hand side of \eqref{elmo} is a product of invertible elements since \eqref{affresolvent} implies that $$\|  (a-b)^{-1}b')\| < 1.$$
This completes the proof.
\end{proof}

\begin{remark}\label{setremark}
Define $\Omega_{n}(\mathcal{B})$ as the union of all $B_{r}(b)$ where $r \in \mathbb{R}_{+}$ and $b \in M_{2n}(\mathcal{B})$ come from Lemma \eqref{expandedresdef}.
We define 
 $$ \Omega(\mathcal{B}) := \sqcup_{n=1}^{\infty} \Omega_{n}(\mathcal{B}). $$
We note that each $\Omega_{n}(\mathcal{B})$ is a connected, open set, so that an analytic function
defined on $\Omega_{n}(\mathcal{B})$ is characterized by its value on any open subset.  Note that this is not a non-commutative set but that each $X\in \Omega(\mathcal{B})$ has a non-commutative neighborhood in this set.

We also remark that, since $\Omega_{n}(\mathcal{B}) \subset M_{2n}(\mathcal{B})$, the non-commutative Cauchy transforms satisfy $$ G_{X}^{(2n)}: \Omega_{n}(\mathcal{B}) \mapsto M_{2n}(\mathcal{B}), $$
where we are only analyzing $G_{X}$ in even dimensions.  However, this contains all of the information on the function since the Cauchy transform respects direct sums.
\end{remark}

\subsection{Analytic Miscellani}
The following are various classical analytic results that will bolster our toolbox in the coming sections.

We cite an extension of the Bloch Theorem for  Banach spaces.
\begin{theorem}\cite[Theorem 1]{Balls}\label{Bloch}
Let $h: B_{R} \mapsto B_{M}$ be a holomorphic function such that $Dh(0)^{-1}$ exists.  Suppose that $\|Dh(0)^{-1}\|^{-1} \geq a$  and put
$$ r = \frac{R^{2}a}{4M}, \ \ \ P = \frac{R^{2}a^{2}}{8M} $$
Then $h$ maps $B_{r}$ biholomorphically onto a domain covering $B_{p}(h(0))$.
\end{theorem}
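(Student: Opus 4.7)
The plan is to prove this via a quantitative holomorphic inverse function theorem, using the Banach--space version of Cauchy's estimates to control the higher-order Taylor terms of $h$ and then the contraction mapping principle to invert $h$ on a suitably small ball.

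First, I would invoke Cauchy's integral formula for holomorphic Banach--space maps on $B_{R}$ to bound the Taylor tail of $h$. Since $\|h\|\leq M$ on $B_{R}$, one obtains the coefficient bound $\|D^{k}h(0)\|/k!\leq M/R^{k}$, which yields, for $\|z\|\leq \rho<R$,
$$\|h(z)-h(0)-Dh(0)z\|\ \leq\ \frac{M(\rho/R)^{2}}{1-\rho/R},$$
together with a comparable estimate $\|Dh(z)-Dh(0)\|\leq M\rho/(R(R-\rho))$. Choosing $\rho=r=R^{2}a/(4M)$ makes these remainders small enough that $Dh(z)$ remains invertible throughout $B_{r}$ with $\|Dh(z)^{-1}\|\leq 2/a$; injectivity of $h$ on $B_{r}$ then follows by integrating $Dh$ along line segments and using this lower bound on $\|Dh(z)^{-1}\|^{-1}$.

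Next, to show that $h(B_{r})$ covers $B_{P}(h(0))$, I would fix $w$ with $\|w-h(0)\|<P$ and apply the Banach fixed-point theorem to the map $T_{w}(z):=z-Dh(0)^{-1}(h(z)-w)$ on $B_{r}$. The contraction factor is controlled by $\sup_{z\in B_{r}}\|I-Dh(0)^{-1}Dh(z)\|$, which the previous estimate keeps below $1/2$, while the self-map condition $T_{w}(B_{r})\subset B_{r}$ reduces to $\|Dh(0)^{-1}(h(0)-w)\|$ plus a nonlinear remainder being at most $r$. The specific value $P=R^{2}a^{2}/(8M)$ is precisely what makes both conditions hold simultaneously, and the resulting fixed point $z\in B_{r}$ solves $h(z)=w$, giving biholomorphicity of $h|_{B_{r}}$ onto a domain containing $B_{P}(h(0))$.

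The main obstacle is really only the bookkeeping of constants: each ingredient---Cauchy estimates, Fr\'echet calculus, the contraction principle---is standard in infinite-dimensional complex analysis, and crucially the contraction-mapping route sidesteps any appeal to compactness that would fail in a general Banach space. The sharp values $r=R^{2}a/(4M)$ and $P=R^{2}a^{2}/(8M)$ emerge from balancing the self-map condition (which pushes $r$ upward relative to $P$) against the contraction condition (which forces the nonlinear remainder on $B_{r}$ to be small relative to $a$), and extracting both simultaneously in one clean pass is the only delicate piece of the argument.
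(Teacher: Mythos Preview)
The paper does not prove this theorem at all: it is quoted verbatim from \cite[Theorem~1]{Balls} under the heading ``We cite an extension of the Bloch Theorem for Banach spaces,'' and is then used as a black box in the proof of Theorem~\ref{Rexist}. There is therefore no proof in the paper to compare your proposal against.

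For what it is worth, your outline is the standard route to such quantitative inverse-function statements in Banach spaces and is conceptually sound. The one place to be careful is the constant tracking you flag at the end: your claimed bound $\|Dh(z)-Dh(0)\|\leq M\rho/(R(R-\rho))$ undercounts the tail (the series $\sum_{k\ge 2}k(M/R^{k})\rho^{k-1}$ sums to $M\rho(2R-\rho)/(R-\rho)^{2}R$), and with the corrected estimate the contraction factor on $B_{r}$ at $r=R^{2}a/(4M)$ does not obviously fall below $1/2$. Harris's original argument in \cite{Balls} organizes things via a Schwarz-lemma-type reduction rather than raw Cauchy coefficient bounds, which is how the specific constants $r=R^{2}a/(4M)$ and $P=R^{2}a^{2}/(8M)$ fall out cleanly. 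This is bookkeeping rather than a conceptual gap, but if you want those exact values you will need to tighten the estimates.
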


Theorem 3.17.17 in \cite{Hille} provides Lipschitz type estimates for vector valued analytic functions.
Indeed, for an analytic function $f$ that is locally bounded by $M(a)$ in a neighborhood of radius $r_{a}$, we have that
\begin{equation}\label{lipschitz}
\| f(y) - f(x) \| \leq \frac{2M(a) \| x - y \|}{r_{a} - 2 \| x - y \|}
\end{equation}

We extend Lemma 3.2 in \cite{WilJFA}.

\begin{proposition}\label{GConvExt}
Assume that $\mu_{k} , \mu \in \Sigma_{0,M}$ for all $k \in \mathbb{N}$.  Let $a , a_{k} \in \mathcal{A}$
denote elements with distributions $\mu$ and $\mu_{k}$. Assume that $$ \mu_{k} \rightarrow \mu $$
in the pointwise weak topology.
Then $$ G_{\mu_{k}}^{(2n)} \rightarrow G_{\mu}^{(2n)}  $$
pointwise  in the weak topology on $\Omega_{n}(\mathcal{B})$.

Further, assume that $b \in M_{n}(\mathcal{B})^{sa}$ is contained in the resolvent of $a$ and $a_{k}$  for $k \geq K$ and has the property that,
for some $\delta > 0$, there exists a $C > 0$ such that
$$ \| (a - b')^{-1} \| < C \ ; \ \  \| (a_{k} - b')^{-1} \| < C $$
for all $b' \in B_{\delta}(b)$ and $k \geq K$.
Then $$G^{(n)}_{\mu_{k}}(b) \rightarrow G^{(n)}_{\mu}(b)$$
 in the weak topology.
\end{proposition}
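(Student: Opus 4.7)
The plan is to first establish weak convergence at a single explicit reference point of $\Omega_n(\mathcal{B})$ by a Neumann/moment expansion, then propagate this through all of $\Omega_n(\mathcal{B})$ via scalar Vitali along complex lines inside the ball decomposition, and finally reduce the second assertion to the first through a direct-sum trick followed by one more Vitali application.

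For the first assertion, fix $T > M$ and take the reference point $b_0 := \mathrm{diag}(iT, -iT, iT, \ldots, -iT) \in \Omega_n(\mathcal{B})$ from Lemma \eqref{expandedresdef}. Then $(b_0 - a \otimes 1_{2n})^{-1}$ is block-diagonal with blocks $(\pm iT - a)^{-1}$, and since $\|a\| \leq M < T$ each block admits the norm-convergent Neumann expansion $\sum_{k \geq 0}(\pm iT)^{-k-1} a^k$. Applying $E_{2n}$ term-by-term expresses $G^{(2n)}_{\mu_m}(b_0)$ as a series whose $k$-th summand is an explicit scalar-matrix multiple of $\mu_m(X^k)$; the bound $\|\mu_m(X^k)\| \leq M^k$ from \eqref{CBND} makes the series converge uniformly in $m$, so for any normal functional $\phi$ the hypothesis $\mu_m(X^k) \to \mu(X^k)$ weakly, combined with dominated convergence, yields $\phi(G^{(2n)}_{\mu_m}(b_0)) \to \phi(G^{(2n)}_\mu(b_0))$. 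Moreover, the Neumann estimate in the proof of Lemma \eqref{expandedresdef} gives $\|(a \otimes 1_{2n} - b)^{-1}\| \leq (r - \|b - b_0'\|)^{-1}$ inside any ball $B_r(b_0') \subset \Omega_n(\mathcal{B})$, so $\{G^{(2n)}_{\mu_m}\}_m$ is locally norm-bounded on $\Omega_n(\mathcal{B})$ uniformly in $m$. Varying $T \in (M, \infty)$ produces a one-parameter family of reference points with accumulation inside $\Omega_n(\mathcal{B})$, and since $\Omega_n(\mathcal{B})$ is connected (Remark \eqref{setremark}), restricting $b \mapsto \phi(G^{(2n)}_{\mu_m}(b))$ to a chain of complex lines inside the ball structure and applying scalar Vitali line by line propagates weak convergence to every $b \in \Omega_n(\mathcal{B})$.

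For the second assertion, given self-adjoint $b \in M_n(\mathcal{B})$ and $s > \|b\|$, the matrix $(b + is 1_n) \oplus (b - is 1_n) \in M_{2n}(\mathcal{B})$---conjugate via a permutation to a point inside a ball of radius $s$ around $\mathrm{diag}(is, -is, \ldots, -is) \in \Omega_n(\mathcal{B})$---can be fed through the first assertion. Direct-sum compatibility of the Cauchy transform then gives weak convergence $G^{(n)}_{\mu_m}(b + is 1_n) \to G^{(n)}_\mu(b + is 1_n)$ for every such $s$. Fix now a normal functional $\phi$ on $M_n(\mathcal{B})$ and set $h_m(z) := \phi(G^{(n)}_{\mu_m}(b + z 1_n))$. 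For $m \geq K$ this function is analytic on the connected open set $U := \{\Im z > 0\} \cup B_\delta(0)$: for $\Im z > 0$, $b + z 1_n$ is automatically in the resolvent of bounded self-adjoint $a_m$ with norm bound $1/\Im z$; for $|z| < \delta$ it lies in $B_\delta(b)$ and the hypothesized uniform bound $C$ applies. The resulting local uniform boundedness of $\{h_m\}$ on $U$, together with convergence $h_m(is) \to h(is)$ for $s > \|b\|$ on a set with accumulation points in $U$, yields $h_m(0) \to h(0)$ by Vitali, which is exactly the desired weak convergence of $G^{(n)}_{\mu_m}(b)$.

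The main subtlety is the Banach-space Vitali argument in the first part: one must connect each target point $b \in \Omega_n(\mathcal{B})$ to a reference point by a chain of complex lines inside the ball decomposition while maintaining uniform resolvent control along the path. This rests on the ball structure of $\Omega_n(\mathcal{B})$ together with the uniform resolvent estimates of Lemmas \eqref{affresolvent} and \eqref{expandedresdef}; with those in hand, the Neumann moment expansion at $b_0$ and the direct-sum reduction in the second part are straightforward.
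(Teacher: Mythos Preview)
Your second assertion is handled correctly and in essentially the same way as the paper: both consider the scalar family $z \mapsto \phi\bigl(G^{(n)}_{\mu_m}(b+z1_n)\bigr)$ on the connected region $\{\Im z>0\}\cup B_\delta(0)$, verify local uniform bounds, and invoke Vitali. Your direct-sum reduction to the first part is a clean way to get the convergence at large $\Im z$ that the paper obtains by citing an earlier lemma.

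The first part, however, has a genuine gap in the propagation step. Your Neumann expansion, as written, establishes weak convergence only at the diagonal points $b_0(T)=\mathrm{diag}(iT,-iT,\ldots)$ for $T>M$, and these all lie on a single complex line $L_0$. A ``chain of complex lines'' cannot carry convergence further: two distinct complex lines in $M_{2n}(\mathcal{B})$ meet in at most one point, and Vitali requires convergence on a set with an accumulation point \emph{on the new line}. Knowing convergence at a single junction point is insufficient, so the chain stalls after one step. (A single complex line is not a set of uniqueness even in $\mathbb{C}^2$, so higher-dimensional identity theorems do not rescue this either.) You could repair this by first extending the Neumann argument to a full open ball around $b_0$---the series $\sum E_{2n}[b^{-1}(ab^{-1})^k]$ still converges there and its terms are $M_{2n}(\mathcal{B})$-moments that converge weakly---after which ball-by-ball propagation does work; but that is not what you wrote.

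The paper avoids all of this with one geometric observation. Writing an arbitrary $b\in\Omega_n(\mathcal{B})$ as $\mathrm{diag}(b_1,\ldots,b_{2n})+T$ with $\|T\|<\min_j|\Im b_j|$, the single complex line
\[
z\longmapsto b+z\begin{pmatrix}1&0\\0&-1\end{pmatrix}\otimes 1_n
\]
has the feature that pushing $z$ up the positive imaginary axis only increases each $|\Im b_j|$ while leaving $\|T\|$ fixed, so the path stays inside $\Omega_n(\mathcal{B})$ for all $\Im z\geq 0$ and reaches the large-imaginary-part region where the moment expansion gives convergence directly. One application of Vitali on a neighbourhood of this ray then yields convergence at $z=0$, i.e.\ at $b$. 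The point you are missing is that the alternating-sign structure of $\Omega_n(\mathcal{B})$ supplies a canonical direction along which every point can be pushed to infinity without leaving the domain, so no chaining is needed.
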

\begin{proof}
Let $\{ \phi_{i} \}_{i=1}^{N} \subset M_{2n}(\mathcal{B})^{\ast}$.
Pick $b \in \Omega_{n}$.  By \eqref{setremark},
$$ b =  \left[\begin{array}  {ccccc}
  b_{1} & 0  & 0 & \cdots & 0 \\
0 & b_{2}  & 0 & \cdots & 0 \\
0 & 0  & b_{3} & \cdots & 0 \\
\ & \vdots & \ & \vdots & \ \\
0 & 0  & 0 & \cdots & b_{2n}
   \end{array} \right] + T  $$
where $b_{2i} \in \mathcal{B}^{-}$ and $b_{2i - 1} \in \mathcal{B}^{+}$ for $i = 1 , 2 , \cdots , n$ and
\begin{equation}\label{forref}
\|T\| < |\Im{(b_{j})}|
\end{equation}
 for $j = 1, 2 , \ldots , 2n$.
This implies that
\begin{equation}
\left[\begin{array}  {ccccc}
  b_{1} + i\lambda & 0  & 0 & \cdots & 0 \\
0 & b_{2} - i\lambda  & 0 & \cdots & 0 \\
0 & 0  & b_{3} + i\lambda & \cdots & 0 \\
\ & \vdots & \ & \vdots & \ \\
0 & 0  & 0 & \cdots & b_{2n} - i\lambda
   \end{array} \right] + T = b + 
\left[\begin{array}  {cc}
 i\lambda  & 0 \\
0  & -i\lambda
  \end{array}\right]\otimes1_{n}
\end{equation}
is an element of $\Omega_{n}(\mathcal{B})$ for all $\lambda \in \mathbb{R}_{+}$ since the right side of \eqref{forref} is increasing with $\lambda$.
Passing to a small neighborhood $\Gamma$ of $\{ i\lambda \}_{\lambda \in \mathbb{R}_{+}}$, openness implies that
$$  b + 
\left[\begin{array}  {cc}
z  & 0 \\
0  & -z
  \end{array}\right]\otimes1_{n} \in \Omega_{n}(\mathcal{B})$$
for all $z\in \Gamma$.

We consider the family of complex functions
$$ \left\{ \phi_{i} \circ G^{(2n)}_{\mu_{k}} \left(  b + 
\left[\begin{array}  {cc}
z  & 0 \\
0  & -z
  \end{array}\right]\otimes1_{n} \right) \right\}_{k \in \mathbb{N} \ , \ i=1, 2, \ldots , N}: \Gamma \mapsto \mathbb{C}. $$
The Cauchy transforms are uniformly bounded on this set by Lemma \eqref{affresolvent} and the 
fact that distributions are in $\Sigma_{0,M}$ implies that $$\|a_{k}\| , \|a\| \leq M.$$
This implies that we have a normal family of complex functions.  By Lemma 3.2 in \cite{WilJFA}, 
we have convergence to $\phi_{i} \circ G_{\mu}$ for those $z \in \Gamma$ with $\Im{(z)}$ large enough.
By normality, this convergence must hold on all of $\Gamma$, in particular when $z=0$.  As this family of functionals was arbitrary, we have weak convergence.


Shifting to $b \in M_{n}(\mathcal{B})^{sa}$ , we redefine $$\Gamma := B_{\delta}(\{0 \}) \cup \{z :\  \Im{(z)} > \delta/2 \} \subset \mathbb{C} \  ; \ \ \Lambda:= \{ b + z1_{n} \}_{z \in \Gamma}$$
By assumption and Lemma \eqref{affresolvent}, there exists a $C' > 0$ such that
$$ \| (a - b')^{-1} \| < C' \ ; \ \ \| (a_{k} - b')^{-1} \| < C' $$
for all $b' \in \Lambda$.  This implies that $$\left\{ \phi_{i} \circ  G^{(n)}_{\mu_{k}}\left( b + z1_{n}   \right) \right\}_{i=1 , \ldots , N \ ; \ k \geq K} : \Gamma \mapsto \mathbb{C}$$
is a normal family of complex analytic functions.  The same argument proves weak convergence.
\end{proof}



\section{Convolution Operations}\label{OpIso}
The main result of this section is the extension of the various convolution operations to unbounded operators.  This will follow from  the more general Theorem \eqref{BigTheorem}.  We must first show that the Cauchy transform encodes all of the information of the distribution associated to $a$.

The following is   well known, though we cannot find a good reference for this specific variation ( see \cite{NS}, Theorem 4.11 for a   variation with $\mathcal{B}$ finite dimensional).
\begin{lemma}\label{isomorphicalg}
Let $a_{1} , a_{2} \in \mathcal{A}$ denote two random variables with the same $\mathcal{B}$-valued $\ast$-distributions.
Then the tracial W$^{\ast}$-algebras $\mathcal{A}_{1} = \{ a_{1}, a_{1}^{\ast} , \mathcal{B} \}''$ and    $ \mathcal{A}_{2} = \{ a_{2} , a_{2}^{\ast}, \mathcal{B} \}''$ are spatially isomorphic and this isomorphism commutes with the conditional expectation.
\end{lemma}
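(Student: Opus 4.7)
The plan is to use the tracial GNS construction on the von Neumann algebras $\mathcal{A}_1$ and $\mathcal{A}_2$ and transport one along the other via an isometry on polynomials.

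\textbf{Step 1 (Reduction to scalar moments).} Since $E$ is $\tau$-preserving, equality of $\mathcal{B}$-valued $\ast$-distributions forces
$$\tau\bigl(P(a_1,a_1^{\ast})\bigr)=\tau\bigl(\mu_{a_1}(P)\bigr)=\tau\bigl(\mu_{a_2}(P)\bigr)=\tau\bigl(P(a_2,a_2^{\ast})\bigr)$$
for every $P\in\mathcal{B}\langle X,X^{\ast}\rangle$. Applying this to $P^{\ast}P$ and using faithfulness of $\tau$ shows that the rule $\pi_{0}\colon P(a_1,a_1^{\ast})\mapsto P(a_2,a_2^{\ast})$ is a well-defined $\ast$-algebra isomorphism from $\mathcal{B}\langle a_1,a_1^{\ast}\rangle$ onto $\mathcal{B}\langle a_2,a_2^{\ast}\rangle$, and it is isometric for the $\|\cdot\|_{2}$-norm coming from $\tau$.

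\textbf{Step 2 (Hilbert space isomorphism).} Let $L^{2}(\mathcal{A}_{i},\tau)$ denote the closure of $\mathcal{A}_{i}$ inside $L^{2}(\mathcal{A},\tau)$, which is the standard-form Hilbert space of the tracial von Neumann algebra $\mathcal{A}_{i}$. Because $\mathcal{B}\langle a_{i},a_{i}^{\ast}\rangle$ is $\|\cdot\|_{2}$-dense in $L^{2}(\mathcal{A}_{i},\tau)$, the isometry in Step 1 extends to a unitary $U\colon L^{2}(\mathcal{A}_{1},\tau)\to L^{2}(\mathcal{A}_{2},\tau)$. A direct computation on polynomials gives $U\,\lambda_{1}(x)=\lambda_{2}(\pi_{0}(x))\,U$ for every $x\in\mathcal{B}\langle a_1,a_1^{\ast}\rangle$, where $\lambda_{i}$ is the standard left action.

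\textbf{Step 3 (Taking weak closures).} Since $\mathrm{Ad}(U)$ is weakly continuous and carries the $\ast$-algebra $\lambda_{1}(\mathcal{B}\langle a_1,a_1^{\ast}\rangle)$ onto $\lambda_{2}(\mathcal{B}\langle a_2,a_2^{\ast}\rangle)$, Kaplansky density promotes it to a spatial $\ast$-isomorphism $\pi\colon \mathcal{A}_{1}\to\mathcal{A}_{2}$ implemented by $U$. For the compatibility with the conditional expectation: $\pi$ fixes $\mathcal{B}$ pointwise (as $\pi_{0}(b)=b$) and preserves $\tau$ (being spatial between standard forms). Hence $\pi^{-1}\circ E|_{\mathcal{A}_2}\circ\pi$ is a $\tau$-preserving conditional expectation from $\mathcal{A}_{1}$ onto $\mathcal{B}$, which by uniqueness of the trace-preserving conditional expectation must equal $E|_{\mathcal{A}_{1}}$. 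This yields $E\circ\pi=\pi\circ E$ on $\mathcal{A}_{1}$.

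\textbf{Main obstacle.} The only delicate point is that we are defining the isomorphism between two a priori different von Neumann algebras acting on two different Hilbert spaces. The cleanest resolution is to view $\mathcal{A}_{1}$ and $\mathcal{A}_{2}$ in their standard forms $L^{2}(\mathcal{A}_{i},\tau)$; the fact that $\tau$ is a faithful trace on all of $\mathcal{A}$ and restricts to a faithful trace on each $\mathcal{A}_{i}$ is what both makes the construction canonical and ensures Kaplansky density applies to pass from the polynomial $\ast$-algebra to the full W$^{\ast}$-algebra.
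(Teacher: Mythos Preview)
Your proof is correct and follows essentially the same strategy as the paper: both use $\tau\circ E=\tau$ to reduce to equality of scalar $\ast$-moments, build a Hilbert space isometry between the GNS/standard representations on polynomials, and observe that this intertwines the left actions to yield a spatial isomorphism. Your write-up is in fact more complete than the paper's, since you explicitly address well-definedness of $\pi_0$ via faithfulness of $\tau$, the passage to weak closures, and the compatibility $E\circ\pi=\pi\circ E$ (which the paper asserts but does not verify).
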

\begin{proof}
Let $\mathcal{H}_{i}$ denote the Hilbert space generated by $\mathcal{A}_{i}$ with inner product
$$\langle \psi, \eta \rangle_{i} = \tau(\eta^{\ast} \psi).$$ In each case, consider the dense family $\{P(a_{i}, a_{i}^{\ast}) \}_{P(X, X^{\ast}) \in \mathcal{B}\langle X , X^{\ast} \rangle}$.
Observe that \begin{align*}
\langle P(a_{1}, a_{1}^{\ast}) , Q(a_{1}, a_{1}^{\ast}) \rangle _{1} & = \tau(Q^{\ast}(a_{1}, a_{1}^{\ast})P(a_{1}, a_{1}^{\ast})  ) = \tau(E[Q^{\ast}(a_{1}, a_{1}^{\ast})P(a_{1}, a_{1}^{\ast})]  ) \\
& = \tau(E[Q^{\ast}(a_{2}, a_{2}^{\ast})P(a_{2}, a_{2}^{\ast})] ) =  \langle P(a_{2}, a_{2}^{\ast}) , Q(a_{2}, a_{2}^{\ast}) \rangle_{2}.
\end{align*}
Thus, the map sending $a_{1}$ to $a_{2}$ and $a_{1}^{\ast}$ to $a_{2}^{\ast}$ extends to a Hilbert space isometry.
As this intertwines the actions of the von Neumann algebras, this defines a spatial isomorphism of W$^{\ast}$-algebras.
\end{proof}


\begin{remark}
Let $a \in \tilde{\mathcal{A}}^{sa}$.  Observe that the Cauchy transform
$$ G^{(n)}(b) := E[(b - a\otimes1_{n})^{-1}] $$
is well defined on the entire resolvent set of the operator $a$.  
Typically, we consider the domain of this function to be the distinguished subset $M_{n}^{+}(\mathcal{B})$.  However, in the unbounded case, the resolvent may be very disconnected.  As we will show in section \eqref{CounterE}, it is possible for two Cauchy transforms to agree on the non-commutative upper-half plane and still have distinct extensions to the entire resolvent.  Thus, the domain $\Omega(\mathcal{B})$ from remark \eqref{setremark} is required for characterization of these operators through the function theory.  We may conclude that the following is an optimal result.
\end{remark}

\begin{theorem}\label{unboundiso}
Let $a_{1} , a_{2} \in \tilde{\mathcal{A}}^{sa}$ denote two random variables whose Cauchy transforms agree on the set $\Omega$.
Then the tracial W$^{\ast}$-algebras $\mathcal{A}_{1} = \{ a_{1} , \mathcal{B} \}''$ and    $ \mathcal{A}_{2} = \{ a_{2} , \mathcal{B} \}''$ are spatially isomorphic.
\end{theorem}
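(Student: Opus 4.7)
The plan is to reduce to Lemma \ref{isomorphicalg} by passing through the bounded resolvents $c_j := (a_j - i)^{-1} \in \mathcal{A}$, which exist by Lemma \ref{affresolvent}. Each $c_j$ is a bounded continuous function of the self-adjoint $a_j$, and the identity $(a_j - z)^{-1} = c_j(1 + (i-z)c_j)^{-1}$ for $z$ near $i$ (extended by analytic continuation for all $z \notin \mathbb{R}$) allows Stieltjes inversion to recover the spectral projections of $a_j$ from $c_j$; hence $\{a_j, \mathcal{B}\}'' = \{c_j, \mathcal{B}\}''$. It therefore suffices to show that $c_1$ and $c_2$ have identical $\mathcal{B}$-valued $*$-distributions: Lemma \ref{isomorphicalg} will then yield a trace-preserving, $E$-compatible spatial isomorphism $\pi: \{c_1, \mathcal{B}\}'' \to \{c_2, \mathcal{B}\}''$ sending $c_1$ to $c_2$, and this extends uniquely to affiliated operators and sends $a_1 = c_1^{-1} + i$ to $a_2$.

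The heart of the argument is the extraction of every mixed $*$-moment $E[b_0 c_j^{\epsilon_1} b_1 \cdots c_j^{\epsilon_k} b_k]$, with $\epsilon_\ell \in \{1,*\}$ and $b_\ell \in \mathcal{B}$, from $G_{a_j}|_{\Omega}$. Fix $n$, let $B_0 \in M_{2n}(\mathcal{B})$ have diagonal $(i, -i, i, -i, \ldots)$, and note that Lemma \ref{expandedresdef} places its unit ball inside $\Omega_n(\mathcal{B})$. For any strictly upper-triangular $T \in M_{2n}(\mathcal{B})$ of norm less than one, the Neumann expansion
\begin{equation*}
G_{a_j}^{(2n)}(B_0 + T) = E\Big[\sum_{m=0}^{2n-1} R_0(-T R_0)^m\Big], \qquad R_0 := (B_0 - a_j \otimes 1_{2n})^{-1},
\end{equation*}
terminates, since $TR_0$ is strictly upper triangular and $R_0$ is diagonal with entries alternating between $-c_j$ and $-c_j^*$. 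Its $(p,q)$-entry is a finite sum over strictly increasing paths $p = p_0 < p_1 < \cdots < p_m = q$ of products interleaving factors $r_{p_\ell} \in \{-c_j, -c_j^*\}$ (determined by the parity of $p_\ell$) with the chosen entries of $T$. By activating only a single $T$-entry at each step of a prescribed path, reading off the $(p_0, p_m)$-entry of $G_{a_j}^{(2n)}(B_0 + T)$, and using $\mathcal{B}$-bimodularity of $E$ to absorb $b_0$ and $b_k$ on the outside, we isolate any desired mixed moment; the sign pattern $(\epsilon_\ell)$ is dictated by the parities of the path indices, and by taking $n$ large enough every pattern is realized as an increasing sequence in $\{1, \ldots, 2n\}$ with matching parities.

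Equality of $G_{a_1}$ and $G_{a_2}$ on $\Omega$ now forces equality of all $\mathcal{B}$-valued $*$-moments of $c_1$ and $c_2$; Lemma \ref{isomorphicalg} and the extension to $\tilde{\mathcal{A}}$ conclude the proof. The central obstacle is the rigid alternating-sign structure in the definition of $\Omega_n(\mathcal{B})$ via Lemma \ref{expandedresdef}: the diagonal of any admissible base point must alternate between $\mathcal{B}^+$ and $\mathcal{B}^-$, so the diagonal of $R_0$ alternates between $-c_j$ and $-c_j^*$, and a naive reading of the series would produce only the alternating products $c_j c_j^* c_j c_j^* \cdots$. The combinatorial resolution is that admissible increasing paths in the matrix expansion need not use consecutive indices, so they can realize any prescribed parity pattern; this is the one place in the argument where the enlarged domain $\Omega$, rather than the non-commutative upper half-plane alone, is genuinely essential.
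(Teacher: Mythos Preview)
Your proof is correct and shares the paper's skeleton---reduce to the bounded resolvents $c_j=(a_j-i)^{-1}$, match their $\mathcal{B}$-valued $\ast$-distributions, invoke Lemma~\ref{isomorphicalg}---but extracts the moments by a genuinely different mechanism. The paper forms $T_j=\mathrm{diag}\bigl((a_j-i)^{-1},(a_j+i)^{-1}\bigr)$ and, through an algebraic identity, rewrites $E_{2n}[(B-T_j\otimes 1_n)^{-1}]$ as $B^{-1}-B^{-1}G_{a_j}^{(2n)}(B_0+B^{-1})B^{-1}$ with $B_0+B^{-1}\in\Omega_n(\mathcal{B})$; it then recovers the full joint distribution of $(a_j\pm i)^{-1}$ wholesale through Lemma~\ref{momentlemma}. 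You instead expand $G_{a_j}^{(2n)}(B_0+T)$ as a terminating Neumann series in the nilpotent perturbation $TR_0$ and read off each individual moment as the $(p_0,p_m)$-entry along a prescribed strictly increasing path with matching parities. This is more elementary---no auxiliary $4n\times 4n$ self-adjoint matrix $S_j$, no appeal to Lemma~\ref{momentlemma}---and it makes transparent exactly why the alternating-sign diagonal in the definition of $\Omega$ is indispensable: both parities must be available along the diagonal of $R_0$ to realise arbitrary $\ast$-patterns in the word $c_j^{\epsilon_1}b_1\cdots c_j^{\epsilon_k}$. One small point you leave implicit: the constraint $\|T\|<1$ needed to place $B_0+T$ inside $\Omega_n(\mathcal{B})$ prevents inserting arbitrary $b_\ell$ directly, but since your expansion is polynomial in the entries of $T$ and the two polynomials agree on an open ball, their coefficients---and hence all moments---must coincide.
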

\begin{proof}

Note that 
$$\mathcal{A}_{j} = \{ a_{j} , \mathcal{B} \}'' = \{(a_{j} + i)^{-1}, (a_{j} - i)^{-1} , \mathcal{B} \}'' $$ (both containments follow from the spectral theorem for normal operators and standard manipulations).

Thus, we consider the operators
$$  T_{j} = \left[\begin{array}  {cc}
   (a_{j} - i)^{-1}  & 0 \\
0 & (a_{j} + i)^{-1} 
  \end{array} \right]$$
$$ S_{ j} =   \left[\begin{array}  {cc}
   0_{2n} & T_{j}\otimes1_{n} \\
T_{j}^{\ast} \otimes 1_{n} & 0_{2n}
   \end{array}\right] $$

We have that, for $B \in M_{2n}(\mathcal{B})$,
\begin{align} \label{lhs}
& \left( \left[\begin{array}  {cc}
   0_{2n} & B \\
B^{\ast}  & 0_{2n}
   \end{array}\right]  - S_{j} \right) ^{-1}  =  \left[\begin{array}  {cc}
   0_{2n} &  (B^{\ast} - T_{j}^{\ast}\otimes 1_{n})^{-1} \\
 (B - T_{j}\otimes1_{n})^{-1}    & 0_{2n}
   \end{array}\right] \\
&= \nonumber \left[\begin{array}  {cc}
   0_{2n} & [(T_{j}^{-1} \otimes 1_{n})[(T_{j}^{-1} \otimes 1_{n}) - B^{-1}]^{-1}B^{-1}]^{\ast}  \\
 (T_{j}^{-1} \otimes 1_{n})[(T_{j}^{-1} \otimes 1_{n}) - B^{-1}]^{-1}B^{-1}& 0_{2n}
   \end{array}\right] \\
&= \nonumber   \left[\begin{array}  {cc}
   0_{2n} & [B^{-1} +  B^{-1}[(T_{j}^{-1} \otimes 1_{n}) - B^{-1}]^{-1}B^{-1}]^{\ast}  \\
B^{-1} + B^{-1}[(T_{j}^{-1} \otimes 1_{n}) - B^{-1}]^{-1}B^{-1}& 0_{2n}
   \end{array}\right] \\
&=  \label{rhs} \left[\begin{array}  {cc}
   0_{2n} & [B^{-1} +  B^{-1}[(a_{j} \otimes 1_{2n}) - B_{0} -  B^{-1}]^{-1}B^{-1}]^{\ast}  \\
B^{-1} + B^{-1}[(a_{j} \otimes 1_{n}) -B_{0} - B^{-1}]^{-1}B^{-1}& 0_{2n}
   \end{array}\right] 
\end{align}
where
$$B_{0} =   \left[\begin{array}  {cc}
   i & 0\\
0 & -i 
   \end{array}\right]\otimes 1_{n} $$
Applying $E_{4n}$ to  \eqref{rhs}, we obtain the matrix
\begin{equation}\label{RHS}
 \left[\begin{array}  {cc}
   0 & [B^{-1} - B^{-1}G_{a_{j}}^{(2n)}(B_{0} + B^{-1})B^{-1}]^{\ast}\\
B^{-1} - B^{-1}G_{a_{j}}^{(2n)}(B_{0} + B^{-1})B^{-1} & 0
   \end{array}\right]
\end{equation}
provided that $\| B^{-1} \| < 1$.   The key point is that this function is independent of $j=1,2$.

 Moreover, applying $E_{4n}$ \eqref{lhs}, we obtain
\begin{equation}\label{LHS}
 G^{(4n)}_{S_{j}}\left( \left[\begin{array}  {cc}
   0_{2n} & B \\
B^{\ast}  & 0_{2n}
   \end{array}\right]  \right) = \left[\begin{array}  {cc}
   0_{2n} & E_{2n}[ (B^{\ast} - T_{j}^{\ast}\otimes 1_{n})^{-1} ]\\
E_{2n}[(B - T_{j}\otimes1_{n})^{-1} ]  & 0_{2n}
   \end{array}\right]  
\end{equation}
and, since $\| S_{j} \| \leq 1$, we have that the domain of this function extends to those $B$ with  $\| B^{-1} \| < 1$.  

Thus, we may conclude that $$ E_{2n}[(B - T_{1}\otimes1_{n})^{-1} ] = E_{2n}[(B - T_{2}\otimes1_{n})^{-1} ].$$
By lemma \eqref{momentlemma}, we may recover the respective joint $\mathcal{B}$-valued distributions of $(a_{1} \pm i)^{-1}$ and $(a_{2} \pm i)^{-1}$ from these functions, so that these joint distributions must also agree.  By lemma \eqref{isomorphicalg}, we have that 
$$\mathcal{A}_{1} = \{(a_{1} + i)^{-1}, (a_{1} - i)^{-1} , \mathcal{B} \}'' \cong \{(a_{2} + i)^{-1}, (a_{2} - i)^{-1} , \mathcal{B} \}'' = \mathcal{A}_{2}$$
proving our theorem.

\end{proof}

\begin{theorem}\label{BigTheorem}
Let $Q(X_{1}, X_{2}, \ldots , X_{k}) \in \mathcal{B}\langle X_{1}, X_{2}, \ldots , X_{k} \rangle^{sa}$.  Assume that
$T_{1}, T_{2}, \ldots , T_{k} \in \tilde{\mathcal{A}}^{sa}$ are $\mathcal{B}$-free.  Then the Cauchy transform
$$G_{Q(T_{1}, T_{2}, \ldots , T_{k})}$$ depends only on the Cauchy transforms $G_{T_{i}}$ for $i = 1,2, \ldots , k$.
\end{theorem}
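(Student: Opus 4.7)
The plan is to bootstrap Theorem \ref{unboundiso} to the multivariable setting by passing to the bounded resolvents $R_i^{\pm} := (T_i \pm i)^{-1}$ and then reconstructing the joint distribution of $(R_i^\pm)_{i=1}^k$ from the marginal data via $\mathcal{B}$-freeness.

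The first step mirrors the proof of Theorem \ref{unboundiso}: the matricial identity \eqref{lhs}--\eqref{RHS} expresses the Cauchy transform of the bounded matrix operator $S_i$ built out of $R_i^+$ and $R_i^-$ entirely in terms of $G_{T_i}$, so by Lemma \ref{momentlemma} the full $\mathcal{B}$-valued joint $\ast$-distribution of $(R_i^+,R_i^-)$ is recovered from $G_{T_i}$ alone.

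Next, since each $R_i^{\pm}$ lies in $\{T_i,\mathcal{B}\}''$, $\mathcal{B}$-freeness of $\{T_i\}_{i=1}^k$ transfers to $\mathcal{B}$-freeness of the subalgebras $\mathcal{A}_i := \{R_i^+, R_i^-, \mathcal{B}\}''$. Freeness then provides a universal combinatorial rule to compute every mixed moment $E\bigl(p_1(R_{i_1}^\pm)\cdots p_\ell(R_{i_\ell}^\pm)\bigr)$ out of the marginal distributions of the pairs $(R_i^+,R_i^-)$, so the full joint $\mathcal{B}$-valued $\ast$-distribution of the $2k$-tuple $(R_i^\pm)_{i=1}^k$ is determined by the individual $G_{T_i}$. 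A multivariable version of Lemma \ref{isomorphicalg} then yields, for any two realisations of the $T_i$, a spatial isomorphism $\Phi$ between the ambient W$^\ast$-algebras $\{R_i^\pm,\mathcal{B}: i=1,\ldots,k\}''$ that fixes $\mathcal{B}$ and intertwines the conditional expectations.

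Finally, $\Phi$ extends canonically to the $\ast$-algebra of affiliated operators and sends $T_{i,1}$ to $T_{i,2}$, because each $T_i$ is recovered from $R_i^+$ and $R_i^-$ by the same bounded functional calculus; hence $\Phi(Q(T_{1,1},\ldots,T_{k,1})) = Q(T_{1,2},\ldots,T_{k,2})$. Applying $\Phi$ to $(b - Q(T_1,\ldots,T_k)\otimes 1_n)^{-1}$ for $b$ in the joint resolvent and using $E\circ\Phi = E$ gives equality of the two Cauchy transforms wherever both are defined. The main obstacle is the multivariable analogue of Lemma \ref{isomorphicalg}: one must verify that matching marginal $\ast$-distributions of the $R_i^\pm$ together with $\mathcal{B}$-freeness produces a single spatial isomorphism that fixes $\mathcal{B}$, preserves the trace and respects $E$ on the algebra generated by all the $\mathcal{A}_i$. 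Once this is in hand, the extension to the affiliated operators and the identification $\Phi(T_{i,1}) = T_{i,2}$ are routine.
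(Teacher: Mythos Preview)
Your proposal is correct and follows essentially the same strategy as the paper: reduce to the bounded resolvents via Theorem \ref{unboundiso}, then use $\mathcal{B}$-freeness to assemble the joint information and produce a trace-preserving, $E$-intertwining isomorphism. The paper dispatches your ``main obstacle'' (the multivariable analogue of Lemma \ref{isomorphicalg}) in one line by observing that $\mathcal{B}$-freeness gives an amalgamated free product decomposition $\{T_1,\ldots,T_k,\mathcal{B}\}'' \cong \{T_1,\mathcal{B}\}'' \ast_{\mathcal{B}} \cdots \ast_{\mathcal{B}} \{T_k,\mathcal{B}\}''$, so that isomorphic factors (supplied by Theorem \ref{unboundiso}) automatically yield an isomorphism of the free products intertwining $E$, without any further moment bookkeeping.
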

\begin{proof}
Let $T_{1}, T_{2}, \ldots , T_{k} \in \tilde{\mathcal{A}}^{sa}$  and $S_{1}, S_{2}, \ldots , S_{k} \in \tilde{\mathcal{A}}^{sa}$  denote two families of operators so that the elements $T_{i}$ are pairwise $\mathcal{B}$-free and the elements $S_{i}$ are pairwise $\mathcal{B}$-free for $i = 1, 2, \ldots , k$.  Assume that $G_{T_{i}} = G_{S_{i}}$ for $i = 1, 2, \ldots , k$.

Note that $\mathcal{B}$-freeness implies that 
\begin{equation}\label{Tequiv}
 \{T_{1}, T_{2}, \ldots , T_{k}, \mathcal{B} \}'' \cong  \{ T_{1} , \mathcal{B} \}'' \ast_{\mathcal{B}} \{ T_{2} , \mathcal{B} \}'' \ast_{\mathcal{B}} \cdots  \ast_{\mathcal{B}} \{ T_{k} , \mathcal{B} \}'' 
\end{equation}
\begin{equation}\label{Sequiv}
 \{S_{1}, S_{2}, \ldots , S_{k}, \mathcal{B} \}'' \cong  \{ S_{1} , \mathcal{B} \}'' \ast_{\mathcal{B}} \{ S_{2} , \mathcal{B} \}'' \ast_{\mathcal{B}} \cdots  \ast_{\mathcal{B}} \{ S_{k} , \mathcal{B} \}'' 
\end{equation}
As we saw in \eqref{unboundiso}, $\{ T_{i} , \mathcal{B} \}'' \cong \{S_{i} , \mathcal{B} \}''$ so that the right hand sides of \eqref{Tequiv} and \eqref{Sequiv} are isomorphic and this isomorphism intertwines the conditional expectation.
We conclude that
\begin{align*}
G^{(n)}_{Q(T_{1}, T_{2}, \ldots , T_{k})}(b) & = E_{n}[(b - Q(T_{1}, T_{2}, \ldots , T_{k})\otimes1_{n})^{-1} ] \\ &= E_{n}[(b - Q(S_{1}, S_{2}, \ldots , S_{k})\otimes 1_{n})^{-1} ] \\ &= G^{(n)}_{Q(S_{1}, S_{2}, \ldots , S_{k})}(b) 
\end{align*}
for all $b \in M_{n}(\mathcal{B})$ and $n \in \mathbb{N}$.  This proves our theorem.
\end{proof}

\section{The Cauchy Transform of an Affiliated Operator}\label{LimitingO}

The main result of this section is the following.

\begin{theorem}\label{MainTheorem}
Let $G = \{ G^{(n)} \}_{n=1}^{\infty}$ denote a non-commutative function with domain containing $\Omega(\mathcal{B})$ (resp. $H^{+}(\mathcal{B})$).
Then there exists a $\mathcal{B}$-valued probability space $(\mathcal{A}, E, \mathcal{B})$ and an element $a \in \tilde{ \mathcal{A}}^{sa}$ such that $$G^{(n)}(b) = E[(b-a\otimes 1_{n} )^{-1}] $$ if and only if there exist a sequence of elements $a_{k} \in \mathcal{A}$ with distributions $\mu_{k}$ such that the following conditions hold:
\begin{enumerate}[I.]
\item\label{convergence}  $G_{\mu_{k}} \rightarrow G$ in the weak topology uniformly on all sets  $ \mathcal{O} \subset \Omega_{n}(\mathcal{B})$  (resp. $H^{+}(\mathcal{B})$) satisfying the property that there exist $C , r > 0$ such that  for all $b \in \mathcal{O}$, $\|b \| < C$ and $|\Im{(b)}| > r 1_{n}$.
\item\label{tightness} For every $\epsilon > 0$, there exists an $N \in \mathbb{N}$ such that $\tau(e_{k}([-N,N]) )> 1- \epsilon$ for every $k \in \mathbb{N}$.
\end{enumerate}
\end{theorem}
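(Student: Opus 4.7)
The plan is to prove the two directions separately, using spectral truncation for the ``only if'' direction and a tracial ultraproduct construction together with the Cayley transform for the substantially harder ``if'' direction.

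For the ``only if'' direction, given $a \in \tilde{\mathcal{A}}^{sa}$ with spectral measure $e_a$, I would take the bounded spectral truncations $a_k := \int_{-k}^{k} \lambda\, de_a(\lambda) \in \mathcal{A}$ with $\|a_k\| \leq k$. Condition \eqref{tightness} is immediate since for $k \geq N$ one has $\tau(e_{a_k}([-N,N])) \geq \tau(e_a([-N,N]))$, and the right hand side tends to $1$ as $N\to\infty$. For condition \eqref{convergence}, the operators $a_k$ converge to $a$ strongly on $\mathrm{Dom}(a)$, which is a core, so by the standard theory one obtains strong resolvent convergence $(b - a_k \otimes 1_n)^{-1} \to (b - a \otimes 1_n)^{-1}$ for each $b$ with $\Im(b) > r\cdot 1_n$. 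Combined with the uniform norm bound $1/r$ from Lemma \eqref{affresolvent} and the normality of $E$, this yields weak convergence of $G^{(n)}_{\mu_k}(b)$ to $G^{(n)}_a(b)$, and the required uniformity on the specified sets follows from the Lipschitz estimate \eqref{lipschitz} applied to the uniformly bounded analytic families.

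For the ``if'' direction, realize each $\mu_k$ by a bounded $a_k$ in a tracial probability space $(\mathcal{A}_k, \tau_k)$ with conditional expectation $E_k: \mathcal{A}_k \to \mathcal{B}$, fix a free ultrafilter $\omega$ on $\mathbb{N}$, and form the tracial ultraproduct $\mathcal{A}_\omega = \prod_\omega(\mathcal{A}_k, \tau_k)$. The subalgebra $\mathcal{B}$ embeds diagonally, and the $E_k$ assemble into a $\tau_\omega$-preserving conditional expectation $E_\omega: \mathcal{A}_\omega \to \mathcal{B}$ defined by taking weak ultralimits in $\mathcal{B}$. The key idea is to avoid the unbounded $a_k$ and instead pass to the Cayley transforms $V_k := (a_k - i)(a_k + i)^{-1} = 1 - 2i(a_k+i)^{-1}$, which are unitaries in $\mathcal{A}_k$. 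Setting $V := [V_k]_\omega \in \mathcal{A}_\omega$, one obtains a unitary, and the inverse Cayley transform $a := i(1+V)(1-V)^{-1}$ will be a self-adjoint operator affiliated with $\mathcal{A}_\omega$ exactly when $\ker(1-V) = \{0\}$. This is where condition \eqref{tightness} enters decisively: since $1-V = 2i[(a_k+i)^{-1}]_\omega$, the scalar spectral mass of $V$ near $1$ corresponds, under $\lambda\mapsto(\lambda-i)(\lambda+i)^{-1}$, to the scalar spectral mass of $a_k$ outside large intervals $[-R,R]$, which tightness forces to vanish uniformly in $k$. Using the continuous functional calculus in $\mathcal{A}_\omega$ (which commutes with ultraproducts) and weak-$*$ convergence of the scalar distributions, the spectral measure of $V$ at $\{1\}$ is zero, so Lemma \eqref{affiliation} applied to $1-V$ produces $(1-V)^{-1} \in \widetilde{\mathcal{A}_\omega}$, and hence a self-adjoint affiliated $a$.

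It remains to verify $G_a = G$ on $\Omega(\mathcal{B})$ (the argument for $H^+(\mathcal{B})$ is identical). For $b \in M_n(\mathcal{B})$ with $\Im(b) > r \cdot 1_n$ for $r$ large, the resolvents $(b - a_k\otimes 1_n)^{-1}$ are bounded by $1/r$, so their ultraproduct $w(b) := [(b - a_k\otimes 1_n)^{-1}]_\omega$ defines an element of $M_n(\mathcal{A}_\omega)$ with $E_\omega(w(b)) = \lim_\omega G^{(n)}_{\mu_k}(b) = G^{(n)}(b)$ by hypothesis \eqref{convergence}. I expect the main obstacle to be identifying $w(b)$ with $(b - a\otimes 1_n)^{-1}$, since this requires relating an ultralimit of bounded resolvents to the resolvent of an unbounded affiliated operator built from the Cayley transform. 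The plan is to express both sides via the bounded quantity $u_k := (a_k + i)^{-1}$ and its ultralimit $u := [u_k]_\omega$, using the bounded algebraic identity $(b - a_k\otimes 1_n)^{-1} = -[1-(b+i)u_k\otimes 1_n]^{-1}u_k\otimes 1_n$ on one side and the analogous identity for the affiliated operator $a = u^{-1} - i$ on the other, where for sufficiently large $\Im(b)$ the contraction $\|(b+i)u_k\otimes 1_n\|$ can be controlled (after a suitable conjugation reducing to block diagonal form as in Lemma \eqref{expandedresdef}). Once this identification is made on a nonempty open subset of each $\Omega_n(\mathcal{B})$, connectedness of $\Omega_n(\mathcal{B})$ noted in Remark \eqref{setremark} and analyticity of Cauchy transforms extend the equality to all of $\Omega(\mathcal{B})$, completing the proof.
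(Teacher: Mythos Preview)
Your overall architecture is sound and runs parallel to the paper's, but via a different construction: where the paper extracts a limit of the bounded $\mathcal{B}$-valued distributions of
\[
A_k=\begin{pmatrix}0&(a_k+i)^{-1}\\(a_k-i)^{-1}&0\end{pmatrix}
\]
using the pointwise-weak compactness of $\Sigma_{0,1}$ (Proposition~\ref{compactnessBS}), you instead form the tracial ultraproduct and take the Cayley transform $V=[V_k]_\omega$.  Both routes produce a bounded normal object whose inverse, after an affine shift, is the desired affiliated $a$; both use tightness to kill the kernel (the paper via the scalar Stieltjes argument with $g_T(i\epsilon)$, you via bump functions at $1\in\sigma(V)$); and both must then identify $E[(b-a)^{-1}]$ with the limiting $G(b)$ by rewriting everything in terms of bounded resolvents.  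So the strategies are genuinely different realizations of the same idea.

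There are, however, two concrete issues in the identification step.  First, your stated mechanism---making $\|(b+i)(u_k\otimes 1_n)\|$ a contraction for large $\Im(b)$---cannot work: $\|u_k\|\le 1$ while $\|b+i\|$ grows with $\Im(b)$, so this product has large norm, and no conjugation changes that.  What \emph{does} work is observing directly that
\[
[1-(b+i)(u_k\otimes 1_n)]^{-1}=-((a_k+i)\otimes 1_n)(b-a_k\otimes 1_n)^{-1}=1-(b+i)(b-a_k\otimes 1_n)^{-1},
\]
which is uniformly bounded by $1+(\|b\|+1)/r$ via Lemma~\ref{affresolvent}.  This uniform bound is exactly what you need to pass the inverse through the ultraproduct; no geometric series is required.

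Second, your single Cayley transform $u_k=(a_k+i)^{-1}$ suffices for the $H^{+}(\mathcal{B})$ statement but not for $\Omega(\mathcal{B})$.  Points of $\Omega_n(\mathcal{B})$ sit near $\mathrm{diag}(b_1,\dots,b_{2n})$ with \emph{alternating} signs of $\Im(b_j)$, so the shift $b\mapsto b+i\cdot 1_{2n}$ can land on a non-invertible element (e.g.\ when some $b_{2j}=-i$), and your algebraic identity breaks down.  The paper's $2\times 2$ trick is precisely the fix: it carries both $(a_k+i)^{-1}$ and $(a_k-i)^{-1}$ simultaneously, so that in the chain \eqref{LINE1}--\eqref{LINE7} the shift by $\pm i$ is matched to the sign of each diagonal block.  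In your language, you should take the ultralimit of the pair $(u_k,u_k^*)=((a_k+i)^{-1},(a_k-i)^{-1})$ (equivalently, of $A_k$ above) and run the identity blockwise; then analytic continuation on the connected $\Omega_n(\mathcal{B})$ finishes as you say.
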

\begin{proof}
Regarding necessity, assume that $$G^{(n)}(b) = ( E\otimes 1_{n})[(b-a\otimes 1_{n} )^{-1}] $$ for some $a \in \tilde{ \mathcal{A}}^{sa}$.
By Lemma \eqref{expandedresdef}, this non-commutative function has domain containing $\Omega(\mathcal{B})$.

Setting notation again, we let 
$$  p_{k} = e_{a}([-k,k]) \ ; \ \ a_{k} = ap_{k}$$
where $e_{a}$ is, again, the functional calculus associated to $a$.
These random variables satisfy condition \eqref{tightness} by definition.
We will show that this sequence satisfies \eqref{convergence}.

Let $\mathcal{O} \subset \Omega_{n}(\mathcal{B})$  be such that there exists $C , r > 0$ so that, for all $b \in \mathcal{O}$ ,
$\| b \| < C$ and $\Im{(b)} > r1$ (this will also show convergence on $H^{+}(\mathcal{B})$ as it imbeds in this set).
Fix a positive element $p \in \mathcal{B}$  with $\| p\|_{L^{2}} \leq 1$ and note that the weak topology on $\mathcal{B}$ is generated by convergence with respect to finite collections of vector states, $$\phi_{p}(b) := \tau(bp) = \langle b, p \rangle$$
where the inner product is with respect to the standard representation.
Setting notation, we refer to 
\begin{equation}
 a^{(n)} := a\otimes1_{n} \ ; \ \ a_{k}^{(n)} := a_{k}\otimes1_{n} \ ; \ \ \tau_{n}:= Tr_{n} \circ \tau \otimes 1_{n} \ ; \ \ E_{n} = E \otimes 1_{n}
\end{equation}
where $Tr_{n}$ is the normalized trace.

To show convergence with respect to such a vector state, observe that 
\begin{align}
\phi_{p} &\left( E_{n}[(b-a^{(n)})^{-1}]  - E_{n}[(b - a_{k}^{(n)}\otimes1_{n})^{-1}] \right)\\ 
& = \tau_{n}\left( E_{n}[(b-a^{(n)})^{-1} - (b - a_{k}^{(n)})^{-1}]p \right) \label{P1} \\ 
&= \tau_{n}\left(  E_{n}[ ( (b-a^{(n)})^{-1} - (b - a_{k}^{(n)})^{-1})p] \right) \label{P2} \\
&=  \tau_{n}\left( ((b-a^{(n)})^{-1} - (b - a_{k}^{(n)})^{-1})p \right) \label{P3} \\
& = \tau_{n}\left( (b-a^{(n)})^{-1}(a^{(n)}[(1-p_{k})])(b - a_{k}^{(n)})^{-1} p\right) \label{P4} \\
&= \tau_{n}\left((1-p_{k}) (b - a_{k}^{(n)})^{-1} p (b-a^{(n)})^{-1}a^{(n)} \right) \label{P5} \\
& \leq \tau_{n}(1-p_{k})^{1/2} \tau_{n}\left( (b - a_{k}^{(n)})^{-1} p (b-a^{(n)})^{-1}|a^{(n)}|^{2}(b^{\ast}-a^{(n)})^{-1}p(b^{\ast} - a_{k}^{(n)})^{-1}  \right)^{1/2} \label{P6}\\
& \leq \tau_{n}\left( 1-p_{k}\right)^{1/2} \| (b - a_{k}^{(n)})^{-1} p (b-a^{(n)})^{-1}|a^{(n)}|^{2}(b^{\ast}-a^{(n)})^{-1}p(b^{\ast} - a_{k}^{(n)})^{-1} \|^{1/2} \label{P7} \\
& \leq \tau_{n}\left( 1-p_{k}\right)^{1/2} \| (b - a_{k}^{(n)})^{-1}\| \| p\| \| (b-a^{(n)})^{-1}a^{(n)}\|  \label{P8} \\
&\leq \tau_{n}\left( 1-p_{k}\right)^{1/2}  \|p\| \frac{1 + C/r}{r} \label{P9}
\end{align} 
where \eqref{P2} follows from bimodularity, \eqref{P3} from trace preservation, \eqref{P5} from traciality, \eqref{P6} from Cauchy-Schwarz, \eqref{P7} from complete positivity of $\tau$ and \eqref{P9} from Lemma \eqref{affresolvent}.  Since \eqref{P9} converges to $0$ as $k \uparrow \infty$, convergence in the weak topology follows.

To prove sufficiency, for each $k \in \mathbb{N}$, consider the element
\begin{equation}\label{element} A_{k} = \left[ \begin{array}  {cc}
    0 & (a_{k} + i)^{-1} \\
  (a_{k} -  i)^{-1} & 0
   \end{array} \right]   \in M_{2}(\mathcal{A})^{sa}.\end{equation}

Note that the elements $A_{k}$ are uniformly bounded with norm $1$ by \eqref{affresolvent}.
As we saw in Proposition \eqref{compactnessBS}, this set is compact in the pointwise weak topology and we refer to the limit point  as $T$.
We note that we may have passed to a larger algebra, but that the trace and conditional expectation and subalgebra are still preserved (this is Corollary 4.6 in \cite{WilJFA}).
We will still refer to the algebra as $\mathcal{A}$ and our notation will ignore the fact that we may have passed to a subsequence.

Regarding the operator $T$, we note that $t_{1,1} = t_{2,2} = 0$ and $t_{1,2} = t_{2,1}^{\ast}$ is a normal operator.
Indeed,  observe that T is self adjoint implies that $t_{1,1} = t_{1,1}^{\ast}$ and $t_{1,2} = t_{2,1}^{\ast}$.  The moments of $t_{1,1}$  satisfy
$$ 0 = \tau_{2}(E_{2}[ (e_{1,1} A_{k} e_{1,1})^{n} ]) \rightarrow \tau_{2}(E_{2}((e_{1,1}T e_{1,1})^{n})) = \tau((t_{1,1})^{n}), $$
proving our first claim.  Moreover, self-adjointness implies that $TT^{\ast} = T^{\ast}T$ and this implies normality of $t_{1,2}$.
Lastly, note that a similar argument implies that for an element $P(X,X^{\ast}) \in \mathcal{B}\langle X, X^{\ast}\rangle$, we have
\begin{equation}\label{jointmom}
\lim_{k\uparrow \infty} P((a_{k} + i)^{-1}, (a_{k} - i)^{-1}) = P(t_{1,2}, t_{2,1}) 
\end{equation}
where the convergence in the weak topology.
Indeed, for any such monomial, we have $$t_{i_{1}, j_{1}} t_{i_{2}, j_{2}} \cdots t_{i_{k}, j_{k}} = (Te_{j_{1},i_{2}} Te_{j_{2},i_{3}} \cdots T e_{j_{k-1}i_{k}}T)_{i_{1}, j_{k}}$$
so the convergence in $\mathcal{B}$-valued distribution to $T$ implies convergence of these non-commutative $\ast$ polynomials.

This provides us with an obvious candidate for our affiliated operator, namely $$a := t_{1,2}^{-1} - i .$$  Thus, we need to show the following:
\begin{enumerate}
\item\label{prove1}  $T^{-1} \in M_{2}( \tilde{ \mathcal{A}} )^{sa}$.
\item\label{prove2} $ t_{1,2}^{-1} - i  \in \tilde{ \mathcal{A}}^{sa}$.
\item\label{prove3} $G^{(n)}_{ t_{1,2}^{-1} - i}(b) = G^{(n)}(b)$ for all $b \in \Omega_{n}(\mathcal{B})$ (resp. $H^{+}(\mathcal{B})$).
\end{enumerate}

By Lemma \eqref{affiliation}, \eqref{prove1} will follow if we show that $\ker{(T)} = \{ 0 \}$.
Appealing to the scalar valued case, consider the Cauchy transform
$$g_{T}(z) := \tau_{2} \circ G_{T}(z1_{2}) = \int_{\mathbb{R}} \frac{1}{z-t} d\nu_{T}(t): \mathbb{C}^{+} \mapsto \mathbb{C}^{-}$$
where $e_{T}$ is the functional calculus of $T$,  and $\nu_{T}$ is the measure defined by the distribution $$ \nu_{T}((-\infty, s ) ) = \tau_{2} \circ e_{T}((-\infty, s)). $$
A  non-empty kernel is equivalent to the condition $$\tau_{2} \circ e_{T}(\{0 \}) = \delta > 0.$$
Thus, for $\epsilon > 0$ , we have that
$$ -\Im{ ( g_{T}(i\epsilon 1_{2}) )} = \int_{\mathbb{R}} \frac{\epsilon}{t^{2} + \epsilon^{2}} d\nu_{T}(t) \geq \frac{ \nu_{T}(\{ 0\})}{\epsilon} =  \frac{ \tau_{2}(e_{T}(\{ 0 \} ))}{\epsilon} = \frac{\delta}{\epsilon} $$

Utilizing similar constructions for the operators $A_{k}$,
let $e_{a_{k}}$ denote the functional calculus of the operator $a_{k}$ and $\nu_{k}$ the corresponding distribution.
First, observe that 
\begin{align} \nonumber (A_{k} - z1_{2})^{-1} &=  \left[ \begin{array}  {cc}
   z & -(a_{k} + i)^{-1} \\
  - (a_{k} -  i)^{-1} & z
   \end{array} \right]^{-1} \\ &=   \left[ \begin{array}  {cc}
   z(z^{2} - (a_{k}^{2} + 1)^{-1} )^{-1} & (a_{k} + i)^{-1}(z^{2} - (a_{k}^{2} + 1)^{-1} )^{-1} \\
  (a_{k} + i)^{-1}(z^{2} - (a_{k}^{2} + 1)^{-1} )^{-1}  & z(z^{2} - (a_{k}^{2} + 1)^{-1} )^{-1}
   \end{array} \right].   
\end{align}
Thus, we have that
$$g_{A_{k}}(z) := \tau_{2} \circ G_{A_{k}}(z1_{2}) = \tau_{2}(  (A_{k} - z1_{2})^{-1}) = \int_{\mathbb{R}} \frac{z}{z^{2} - \frac{1}{t^{2} + 1}}d\nu_{k}(t).$$
Therefore, 
\begin{align}
-\Im{(g_{A_{k}}(i\epsilon1_{2}))} &=   \int_{\mathbb{R}} -\Im{\left( \frac{(i\epsilon)\left[(-i\epsilon)^{2} - \frac{1}{t^{2} + 1}\right]}{ \left(-\epsilon^{2} - \frac{1}{t^{2} + 1}\right)^{2}}d\nu_{k}(t) \right)}\\
&=   \int_{\mathbb{R}} \left( \frac{\epsilon}{ \epsilon^{2} + \frac{1}{t^{2} + 1}}\right)d\nu_{k}(t) \\
&=\label{babe} \int_{\mathbb{R}} \frac{\epsilon(t^{2} + 1)}{\epsilon^{2}(t^{2} + 1) + 1}d\nu_{k}(t).
\end{align}

Now, condition \eqref{tightness} is equivalent to tightness of the family of measures $\{ \nu_{k} \}_{k \in \mathbb{N}}$
so that we may assume that this family subconverges to a measure $\nu$.  Since $G_{A_{k}}$ converges to $G_{T}$ in the pointwise weak topology, we  conclude that $$g_{\nu}(z) := \int_{\mathbb{R}} \frac{z}{z^{2} - \frac{1}{t^{2} + 1}}d\nu(t) = g_{T}(z).$$  Thus, for all $\epsilon > 0$, we have that \begin{equation}\label{oldstyle}
 -\Im(g_{\nu}(i\epsilon)) = -\Im( g_{T}(i\epsilon)) \geq \frac{\delta}{\epsilon} .\end{equation}  
However, pick $N > 0$ satisfying the following:
$$\nu([-N,N]) > 1 - \delta/2 .$$
Combining \eqref{babe} and \eqref{oldstyle}, we have that
\begin{align}
0 & \leq \int_{\mathbb{R}} \frac{\epsilon(t^{2} + 1)}{\epsilon^{2}(t^{2} + 1) + 1}d\nu(t)  - \frac{\delta}{\epsilon} \\ &\leq \frac{\nu\left(\mathbb{R}\setminus [-N,N] \right)}{\epsilon} + \frac{\epsilon(N^{2} + 1)\nu([-N,N])}{\epsilon^{2}(N^{2} + 1) + 1} - \frac{\delta}{\epsilon}\\
&\leq \frac{\epsilon(N^{2} + 1)}{\epsilon^{2}(N^{2} + 1) + 1} - \frac{\delta}{2\epsilon}
\end{align}
This implies
\begin{equation}
\frac{\delta}{2} \leq \frac{\epsilon^{2}(N^{2} + 1)}{\epsilon^{2}(N^{2} + 1) + 1}
\end{equation}
However, the right hand side converges to $0$ as $\epsilon \downarrow 0$.
This contradiction implies that $T$ has no kernel. 
  This completes the proof of \eqref{prove1}.

Now, note that $T^{-1} \in M_{2}( \mathcal{ \tilde{A}} )^{sa}$, which implies
$$ (t_{1,2}^{-1} - i)e_{1,1} = e_{1,2} \left( T^{-1} + \left[ \begin{array}  {cc}
    0 & i \\
  -i & 0 
   \end{array} \right] \right) e_{2,1} \in  M_{2}( \mathcal{ \tilde{A}} )^{sa} .$$
Property \eqref{prove2} is an immediate consequence.


Regarding claim \eqref{prove3}, we will prove this under the assumption that \eqref{convergence} holds on $\Omega(\mathcal{B})$.  
Define
$$ b = \left[ \begin{array}  {cc}
   \lambda i & 0 \\
 0 & -\gamma i
   \end{array} \right]  \otimes1_{n} + \delta \in \Omega_{n}(\mathcal{B}). $$
where $\lambda , \gamma \in \mathbb{R}^{+}$ and $\|\delta \| << \max\{ \lambda , \gamma\}$.  Further define
$$  B = \left[ \begin{array}  {cc}
    0_{2n} & b^{\ast} \\
  b & 0_{2n} 
   \end{array} \right] \ ; \ \ B' = B - \left[ \begin{array}  {cc}
    0_{2n} & i1_{2n} \\
  -i1_{2n} & 0_{2n} 
   \end{array} \right] =  \left[ \begin{array}  {cc}
    0_{2n} & b^{\ast} -  i1_{2n} \\
  b + i1_{2n} & 0_{2n}
   \end{array} \right]$$.

Observe that \eqref{jointmom} implies that the 
$$ \left[ \begin{array}  {cc}
    0_{2n} & ( a_{k} + i)^{-1}  \otimes1_{2n}\\
(a_{k} - i)^{-1}  \otimes1_{2n}& 0_{2n}
   \end{array} \right]  \rightarrow \left[ \begin{array}  {cc}
    0_{2n} & t_{1,2} \otimes1_{2n}\\
  t_{2,1} \otimes1_{2n}& 0_{2n}
   \end{array} \right] $$
in the pointwise weak topology.

We have that
\begin{align} 
 E_{4n} \label{LINE1} & \left[ \left(\left[ \begin{array}  {cc}
    0_{2n} & (t_{1,2}^{-1} + i) \otimes1_{2n}\\
  (t_{2,1}^{-1} -i ) \otimes1_{2n}& 0_{2n} 
   \end{array} \right] - B \right)^{-1}\right] \\ 
&= \label{LINE2} E_{4n}\left[\left( \left[ \begin{array}  {cc}
    0_{2n} & t_{1,2}^{-1} \otimes1_{2n}\\
  t_{2,1}^{-1} \otimes1_{2n}& 0_{2n}
   \end{array} \right] - B'\right)^{-1}\right] \\
&=  \label{LINE3}    -B'^{-1} +  B'^{-1} E_{4n}\left[\left( B'^{-1} - \left[ \begin{array}  {cc}
    0_{2n} & t_{1,2} \otimes1_{2n}\\
  t_{2,1} \otimes1_{2n}& 0_{2n}
   \end{array} \right] \right)^{-1}\right] B'^{-1} \\
&=    \label{LINE4}  \lim_{k\uparrow\infty} -B'^{-1} +  B'^{-1} E_{4n}\left[\left( B'^{-1} - \left[ \begin{array} {cc}
    0_{2n} & (a_{k} + i)^{-1} \otimes1_{2n}\\
  (a_{k} - i)^{-1} \otimes1_{2n}& 0_{2n}
   \end{array} \right] \right)^{-1}\right]B'^{-1} \\
 &=   \label{LINE5}  \lim_{k\uparrow\infty}  E_{4n}\left[\left( \left[ \begin{array}  {cc}
    0_{2n} & (a_{k} + i) \otimes1_{2n}\\
 (a_{k} - i) \otimes1_{2n}& 0_{2n}
   \end{array} \right] - B'\right)^{-1}\right] \\
 &=  \label{LINE6} \lim_{k\uparrow\infty} E_{4n}\left[\left( \left[ \begin{array}  {cc}
    0_{2n} & a_{k}  \otimes1_{2n}\\
a_{k}  \otimes1_{2n}& 0_{2n}
   \end{array} \right] - B\right)^{-1} \right] \\
&=  \label{LINE7} \left[ \begin{array}  {cc}
    0_{n} & G^{(2n)}(b^{\ast})\\
 G^{(2n)}(b)& 0_{n}
   \end{array} \right] 
\end{align}
Each line is consistent with the domains of the various functions.  Indeed, for \eqref{LINE1} and \eqref{LINE2},  $ (t_{1,2}^{-1} + i) \otimes1_{2n}$ is a self adjoint operator and $b \in \Omega_{n}(\mathcal{B})$ is always in the resolvent set, so this extends to the matrix inverse (the same argument works for \eqref{LINE5} and \eqref{LINE6}).
For \eqref{LINE3} , note that, for $\delta = 0$,
$$\left(B'^{-1} - \left[ \begin{array}  {cc}
    0_{2n} & t_{1,2} \otimes1_{2n}\\
  t_{2,1} \otimes1_{2n}& 0_{2n}
   \end{array} \right] \right)^{-1}$$ is invertible for $\lambda , \gamma > 1$ with inverse equal to $$
\left[ \begin{array}  {cc} 0_{2n} & \left[ \begin{array}  {cc}
  \frac{ i}{1 + \lambda} - t_{1,2}& 0 \\ 
 0 & \frac{i}{1-\gamma} -t_{1,2}
   \end{array} \right]^{-1}  \otimes1_{n} \\
  \left[ \begin{array}  {cc}
  \frac{- i}{1 + \lambda} - t^{\ast}_{1,2}& 0 \\ 
 0 & \frac{-i}{1-\gamma} -t^{\ast}_{1,2}
   \end{array} \right]^{-1}  \otimes1_{n}  &  0_{2n}
\end{array} \right].$$
Thus, for $\delta$ small, this is still invertible.  Again, the same argument works for \eqref{LINE4}.
The convergence of the Cauchy transforms follows from convergence in the pointwise weak topology by Proposition \eqref{GConvExt}.

Now, equating \eqref{LINE1} and \eqref{LINE7}, specifically the lower left blocks,  we conclude that \eqref{prove3} holds for the case $\Omega(\mathcal{B})$ by analytic continuation.  

The case where assumption \eqref{convergence} from the statement of the theorem holds only on $H^{+}(\mathcal{B})$  follows in a similar manner.  Indeed, \eqref{LINE1} - \eqref{LINE7}, should have $n$ replace with $n/2$.  Each line makes sense for $b \in M_{n}^{+}(\mathcal{B})$, we have that $b + i1_{n} \in M_{n}^{+}(\mathcal{B})$ so that \eqref{LINE3} and \eqref{LINE4} so that these equalities are consistent with the domains.   The invertibility of the integrands in \eqref{LINE1}, \eqref{LINE2}, \eqref{LINE5} and \eqref{LINE6} 
follows from the invertibility of the fact that the $1,2$ and $2,1$ blocks are invertible and the $1,1$ and $2,2$ blocks are $0$.

This completes the proof of our theorem.

\end{proof}

\begin{remark}
Note that assumption \eqref{tightness} may not be weakened.  Indeed, if we consider the atomic measures $ \mu_{k} = \delta_{0}/2 + \delta_{k}/2$, 
we have that $$ G_{\mu_{k}}(z) = \frac{1}{2z} + \frac{1}{2(z-k)} \mapsto \frac{1}{2z}$$
where the convergence is as $k \uparrow \infty$, uniformly on sets of the form $\mathcal{O}$.  The limit point is not a Cauchy transform as it does not have the appropriate asymptotics.

Assumption \eqref{tightness} will be of importance later in this work and will be referred to as \textit{tightness}, with various modifiers.

Also note that, \eqref{convergence} is only assumed on $H^{+}(\mathcal{B})$, we can recover an unbounded operator whose Cauchy transform agrees on $H^{+}(\mathcal{B})$ but, as we shall see in the next section, this Cauchy transform may have a distinct extension to the full resolvent set.
\end{remark}

\section{Counterexamples Arising from Cauchy Distributions}\label{CounterE}

Let $X$ denote a random variable with the Cauchy distribution.
Note that the Cauchy transform of $X$ satisfies
$$ \phi((z-X)^{-1}) = \frac{1}{z \pm i} $$
for $z \in \mathbb{C}^{\pm}$.

We will utilize Boolean independence in what follows (see \cite{SBool} for the relevant definitions).

\begin{proposition}\label{FBCS}
Let $X_{1} , X_{2} , \ldots , X_{n} \in \tilde{\mathcal{A}}^{sa}$ denote standard Cauchy distributed random variables in a $C^{\ast}$-probability space $(\mathcal{A}, \phi)$, where $\mathcal{A}$ is assumed to be large enough to maintain free, Boolean or classical independence for these variables.
Then, for any collection $z_{1}, z_{2}, \ldots, z_{k} \in \mathbb{C}^{+}$, the moment
$$ \phi\left( (z_{1} - X_{i_{1}})^{-1} (z_{2} - X_{i_{2}})^{-1} \cdots (z_{k} - X_{i_{k}})^{-1} \right) $$
is equal to the  fixed value 
$$\prod_{j=1}^{k}  (z_{j} + i )^{-1}$$  for  random variables satisfying each of the following:
\begin{enumerate}
\item $X_{1} = X_{2} = \cdots = X_{n}$
\item $X_{1} , X_{2} , \ldots , X_{n}$ are classically independent.
\item $X_{1} , X_{2} , \ldots , X_{n}$ are freely independent.
\item $X_{1} , X_{2} , \ldots , X_{n}$ are Boolean independent.
\end{enumerate}
\end{proposition}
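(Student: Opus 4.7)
The plan is to reduce each of the four cases to the single-variable residue computation, and to exploit the very strong multiplicative structure that this computation forces on the joint moments of resolvents.

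First I would dispatch case (1), where $X_1 = X_2 = \cdots = X_n =: X$. Since $X$ has the standard Cauchy distribution, the spectral theorem identifies $\phi(\prod_j (z_j - X)^{-1})$ with the integral $\int_{\mathbb{R}} \prod_j (z_j - t)^{-1} \frac{dt}{\pi(1+t^2)}$. Writing $1+t^2 = (t-i)(t+i)$ and closing the contour in the lower half-plane (legitimate since every $z_j \in \mathbb{C}^+$ and the integrand decays like $|t|^{-(k+2)}$), only the pole at $t = -i$ contributes, and the residue calculation produces exactly $\prod_j (z_j + i)^{-1}$.

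Cases (2) and (4) then reduce to (1) by appropriate reorderings of the product. In case (2) the $X_i$'s commute classically, so I would group the factors according to the common index $i$, use classical independence to factor the expectation across distinct $i$'s, and apply (1) to each factor. In case (4) I cannot commute, but I can merge each maximal run of consecutive equal indices $i_j$ into a single element $A_p$ lying in the algebra generated by one $X_{J_p}$; this yields an alternating product $A_1 A_2 \cdots A_m$ with $J_p \neq J_{p+1}$. Boolean independence, which unlike freeness requires no centering, then gives $\phi(A_1 \cdots A_m) = \prod_p \phi(A_p)$, and case (1) evaluates each $\phi(A_p)$.

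The main obstacle is case (3). After the same consecutive-grouping reduces the problem to $\phi(A_1 \cdots A_m)$ with alternating $J_p$'s, my plan is to apply the moment-cumulant formula $\phi(A_1 \cdots A_m) = \sum_{\pi \in NC(m)} \kappa_\pi(A_1,\ldots,A_m)$, summed over non-crossing partitions of $[m]$. Freeness of the $X_j$'s kills every partition having a block whose indices come from two different $X_j$-subalgebras. The extra input, and the crux of the argument, is that case (1) itself trivializes the free cumulants inside each single-variable subalgebra: since $\phi$ is multiplicative on arbitrary products of resolvents of a single Cauchy $X_j$, inverting the moment-cumulant relation inside the unital algebra generated by $\{(z - X_j)^{-1} : z \in \mathbb{C}^+\}$ forces $\kappa_n(a_1,\ldots,a_n) = 0$ for all $n \geq 2$. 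Consequently the only non-crossing partition that survives is the all-singletons one, and the expansion collapses to $\prod_p \phi(A_p) = \prod_j (z_j + i)^{-1}$. The subtle bookkeeping point is that the $X_j$'s are unbounded, so the cumulant formalism should be applied inside the bounded $C^*$-subalgebra generated by the resolvents rather than inside any algebraic $*$-algebra generated by the $X_j$'s themselves.
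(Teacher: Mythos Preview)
Your argument is correct. Cases (2) and (4) match the paper exactly, and your residue computation in case (1) is a fine substitute for the paper's partial-fraction trick. The genuine divergence is in the free case. The paper avoids cumulants entirely: it proves by a short induction that $\phi(\overline{Q_1}\cdots\overline{Q_p})=0$ for \emph{any} blocking $Q_1,\ldots,Q_p$ into single-variable resolvent products (not just alternating ones), using only the definition of freeness and the single-variable multiplicativity from case (1); expanding each $M_j=\overline{M_j}+\phi(M_j)$ then leaves only the scalar term. Your route via the moment--cumulant expansion is more structural: the single-variable factorisation from case (1) says $\phi$ is a character on the resolvent algebra of each $X_j$, which forces $\kappa_r=0$ for $r\geq 2$ there, and combined with vanishing of mixed cumulants this collapses the $NC(m)$ sum to the singleton partition. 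Your argument is cleaner conceptually and makes explicit \emph{why} the Cauchy distribution is special, at the cost of importing the cumulant machinery; the paper's induction is more self-contained and would be readable without any familiarity with the $NC$ lattice. Your closing remark about working inside the bounded $C^*$-algebra generated by the resolvents is the right way to sidestep the unboundedness of the $X_j$'s.
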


\begin{proof}
We begin by showing that, if $X_{1} = X_{2} = \cdots = X_{n}$, then
\begin{equation}\label{onevarclaim}
\phi\left( (z_{1} - X_{i_{1}})^{-1} (z_{2} - X_{i_{2}})^{-1} \cdots (z_{k} - X_{i_{k}})^{-1} \right) = \prod_{j=1}^{k}  (z_{j} + i )^{-1}.
\end{equation}
We may assume that the $z_{j}$ are all distinct and the full result will follow through continuity.  
With this assumption, using partial fractions, consider $\lambda_{1} , \lambda_{2} , \ldots , \lambda_{k} \in \mathbb{C}$ such that
$$(z_{1} - X_{i_{1}})^{-1} (z_{2} - X_{i_{2}})^{-1} \cdots (z_{k} - X_{i_{k}})^{-1} = \frac{\lambda_{1}}{z_{1} - X_{i_{1}}} + \frac{\lambda_{2}}{z_{2} - X_{i_{2}}} + \cdots + \frac{\lambda_{k}}{z_{k} - X_{i_{k}}} .$$
Note that
\begin{align*} \phi\left( \frac{\lambda_{1}}{z_{1} - X_{i_{1}}}  + \frac{\lambda_{2}}{z_{2} - X_{i_{2}}} + \cdots + \frac{\lambda_{k}}{z_{k} - X_{i_{k}}} \right) = \frac{\lambda_{1}}{z_{1} + i} + \frac{\lambda_{2}}{z_{2} + i} + \cdots & + \frac{\lambda_{k}}{z_{k} + i} .\end{align*}
Reversing the partial fraction decomposition, claim \eqref{onevarclaim} follows.

We next assume that $X_{1} , X_{2} , \ldots, X_{n}$ are classically independent.  
Rearrange the product
$$(z_{1} - X_{i_{1}})^{-1} (z_{2} - X_{i_{2}})^{-1} \cdots (z_{k} - X_{i_{k}})^{-1}$$
into $n$ blocks $P_{j}$, arranged by the index of $X_{i_{p}}$.
Then
$$ \phi\left((z_{1} - X_{i_{1}})^{-1} (z_{2} - X_{i_{2}})^{-1} \cdots (z_{k} - X_{i_{k}})^{-1} \right)= \prod_{j=1}^{n} \phi(P_{j})$$
by classical independence.  The result then reduces to \eqref{onevarclaim}.

To address the  Boolean case, we rewrite the product 
$$(z_{1} - X_{i_{1}})^{-1} (z_{2} - X_{i_{2}})^{-1} \cdots (z_{k} - X_{i_{k}})^{-1} $$
as follows.  Let $j$ denote the largest number so that $i_{1} = i_{2} = \ldots = i_{j}$.  Let $$ M_{1} =  (z_{1} - X_{i_{1}})^{-1} (z_{2} - X_{i_{2}})^{-1} \cdots (z_{k} - X_{i_{j}})^{-1}.$$
Repeating this process, we rewrite this as an alternating product of blocks with the same index,
$$(z_{1} - X_{i_{1}})^{-1} (z_{2} - X_{i_{2}})^{-1} \cdots (z_{k} - X_{i_{k}})^{-1}  = M_{1}M_{2} \cdots M_{\ell}$$
with $\ell \leq k$.
We have that 
\begin{align*}
\phi\left[ (z_{1} - X_{i_{1}})^{-1} (z_{2} - X_{i_{2}})^{-1} \cdots (z_{k} - X_{i_{k}})^{-1} \right]  & = \phi( M_{1}M_{2} \cdots M_{\ell}) \\ &= \phi( M_{1}) \phi(M_{2}) \cdots \phi(M_{\ell})
\end{align*}
by Boolean independence, and our claim again reduces to \eqref{onevarclaim}.

Thus, we are left with the free case.  We once again decompose our product
$$(z_{1} - X_{i_{1}})^{-1} (z_{2} - X_{i_{2}})^{-1} \cdots (z_{k} - X_{i_{k}})^{-1}  = Q_{1} Q_{2} \cdots Q_{p} $$
where the terms in each of the $Q_{j}$ have the same index but where it is \textit{no longer assumed} that the indices alternate.
Let $$\overline{Q_{j}} = Q_{j} - \phi(Q_{j}). $$
We claim that
\begin{equation}\label{multvarclaim} \phi(\overline{Q_{1}} \ \overline{Q_{2}} \cdots \overline{Q_{p}}) = 0 \end{equation} 
even without the assumption that the indices alternate.

Proceeding by induction, the $p=1$ case is immediate.
For $p=2$, if the indices differ, then this follows from freeness.  
If the indices are the same, polarize the term so that
$$  \phi \left(\overline{Q_{1}} \ \overline{Q_{2}}\right) = \phi \left(Q_{1}\overline{Q_{2}} \right) - \phi \left(Q_{1}\right)\phi \left(\overline{Q_{2}}\right) = \phi \left(Q_{1}\overline{Q_{2}}\right) = \phi \left(Q_{1}Q_{2}\right) - \phi \left(Q_{1} \right)\phi \left(Q_{2} \right).$$
It is immediate from \eqref{onevarclaim} that the right hand side is $0$, proving \eqref{multvarclaim} for $p=2$.

For general $p$, if the indices are indeed alternating, then the product is $0$ by freeness.  Otherwise, we may assume that the indices associated to $Q_{1}$ and $Q_{2}$ are the same (since free random variables can always be realized in a tracial setting).
\begin{align}
 \phi(\overline{Q_{1}} \ \overline{Q_{2}} \cdots \overline{Q_{p}}) &= \phi(Q_{1} \ \overline{Q_{2}} \cdots \overline{Q_{p}}) - \phi(Q_{1})\phi(\overline{Q_{2}} \ \overline{Q_{3}} \cdots \overline{Q_{p}}) \\
&= \phi(Q_{1} \ \overline{Q_{2}} \cdots \overline{Q_{p}}) \label{inductionhyp} \\
&= \phi(Q_{1} Q_{2} \overline{Q_{3}} \cdots \overline{Q_{p}}) - \phi(Q_{2})\phi(Q_{1} \overline{Q_{3}} \cdots \overline{Q_{p}}) \label{laststep}
\end{align}
where \eqref{inductionhyp} is by induction.
Let $R_{1}$ take on either $Q_{1}Q_{2}$ or $Q_{1}$.  Since $Q_{1}$ and $Q_{2}$ have the same index, we have that
$$ 0 = \phi(\overline{R_{1}} \  \overline{Q_{3}} \cdots \overline{Q_{p}}) =  \phi(R_{1} \  \overline{Q_{3}} \cdots \overline{Q_{p}}) -  
\phi(R_{1}) \phi( \overline{Q_{3}} \cdots \overline{Q_{p}}). $$
Applying this equality to both terms in \eqref{laststep}, we have that  \eqref{laststep} is equal to 
$$ \phi(Q_{1} Q_{2}) \phi( \overline{Q_{3}} \cdots \overline{Q_{p}}) - \phi(Q_{1}) \phi(Q_{2}) \phi( \overline{Q_{3}} \cdots \overline{Q_{p}}) = 0 $$
by induction.
This proves \eqref{multvarclaim}.

Returning to the main claim for the free case, we once again take an alternating decomposition 
$$(z_{1} - X_{i_{1}})^{-1} (z_{2} - X_{i_{2}})^{-1} \cdots (z_{k} - X_{i_{k}})^{-1}  = M_{1}M_{2} \cdots M_{\ell}$$
with $\ell \leq k$.  Letting $$ M_{j}^{(0)} = \overline{M_{j}} \ ; \ \   M_{j}^{(1)} = \phi(M_{j}),$$
we have that 
$$\phi( M_{1}M_{2} \cdots M_{\ell}) = \sum_{ \epsilon_{1} , \epsilon_{2} , \ldots , \epsilon_{\ell} = 1}^{2} \phi \left( M_{1}^{\epsilon_{1}} M_{2}^{\epsilon_{2}} \cdots M_{\ell}^{\epsilon_{\ell}} \right) . $$
For a fixed term $$ \phi \left( M_{1}^{\epsilon_{1}} M_{2}^{\epsilon_{2}} \cdots M_{\ell}^{\epsilon_{\ell}} \right),$$
we may factor out the terms with $\epsilon_{j} = 1$ (since these are just scalars).  The remaining terms (if there are any) all satisfy $\epsilon_{p} = 0$, so that this  is a special case of \eqref{multvarclaim}.  Thus, if any of the $\epsilon_{p} = 0$, the term disappears so that the only non-zero contribution is when $\epsilon_{1} = \epsilon_{2} = \cdots = \epsilon_{\ell} = 1$.
Thus,
$$\phi( M_{1}M_{2} \cdots M_{\ell}) = \phi( M_{1}) \phi( M_{2}) \cdots \phi(M_{\ell}) $$
and our theorem then follows from \eqref{onevarclaim}.
\end{proof}

\begin{corollary}
Let $X_{1} , X_{2} , \ldots , X_{n} \in \tilde{\mathcal{A}}^{sa}$  denote standard Cauchy distributed random variables in a $C^{\ast}$-probability space $(\mathcal{A}, \phi)$where $\mathcal{A}$ is assumed to be large enough to maintain free, Boolean or classical independence for these variables..
Consider the operator-valued probability space $(M_{n}(\mathcal{A}), E, M_{n}(\mathbb{C}))$
where $E = \phi \otimes 1_{n}$ and $\mathcal{B} = M_{n}(\mathbb{C})$.
Let \begin{equation} X = \left[ \begin{array}  {ccccc}
    X_{1} & 0 & 0 & \cdots & 0  \\
 0  & X_{2} & 0 & \cdots & 0 \\
\ & \vdots & \ & \vdots & \ \\
0  & 0 & 0 & \cdots & X_{n} \\
   \end{array} \right]   \in M_{n}(\mathcal{A})^{sa}.\end{equation}
Then, for all $B\in M_{nk}^{+}(\mathcal{B})$, we have that
$$ G_{X}^{(k)}(B) =  (B + i1_{nk})^{-1}$$
for $X_{i}$ satisfying each of the following:
\begin{enumerate}
\item $X_{1} = X_{2} = \cdots = X_{n}$
\item $X_{1} , X_{2} , \ldots , X_{n}$ are classically independent.
\item $X_{1} , X_{2} , \ldots , X_{n}$ are freely independent.
\item $X_{1} , X_{2} , \ldots , X_{n}$ are Boolean independent.
\end{enumerate}
In particular, the restriction $$ G_{X}|_{H^{+}(M_{n}(\mathbb{C}))} $$
does not distinguish the associated operator algebras $\{\{X\} \cup M_{n}(\mathbb{C}) \}''.$
\end{corollary}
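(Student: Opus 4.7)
The plan is to reduce the identity $G_X^{(k)}(B) = (B + i 1_{nk})^{-1}$ to a direct invocation of Proposition \ref{FBCS} via a resolvent expansion. Both sides are analytic in $B$ on the convex (hence connected) open set $M^+_{nk}(\mathbb{C})$; indeed the right-hand side is well-defined there since $\Im(B + i 1_{nk}) = \Im(B) + 1_{nk} > 0$. It therefore suffices to verify the equality on a nonempty open neighborhood and then extend by analytic continuation.

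Concretely, I would write $B = D + N$ with $D$ the diagonal part of $B$ and $N$ the off-diagonal part, and restrict first to $B$ near $iR 1_{nk}$ for large $R$. Lemma \ref{affresolvent} supplies the bound $\|(D - X\otimes 1_k)^{-1}\| \le 1/\min_\alpha \Im(z_\alpha)$, where $z_\alpha := D_{\alpha\alpha} \in \mathbb{C}^+$, so for $\|N\|$ small the Neumann expansion
\[
(B - X\otimes 1_k)^{-1} = \sum_{r=0}^{\infty} (-1)^r (D - X\otimes 1_k)^{-1}\bigl[N(D - X\otimes 1_k)^{-1}\bigr]^r
\]
converges in operator norm. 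Indexing positions by $\alpha \in \{1,\ldots,nk\}$ with class function $i(\alpha) \in \{1,\ldots,n\}$ so that $(X \otimes 1_k)_{\alpha\alpha} = X_{i(\alpha)}$, the $(\alpha,\beta)$ entry of the $r$-th Neumann term collapses (since $(D - X\otimes 1_k)^{-1}$ is diagonal) to a finite sum, over tuples $\alpha_1, \ldots, \alpha_{r-1}$, of scalars $N_{\alpha \alpha_1} N_{\alpha_1 \alpha_2} \cdots N_{\alpha_{r-1} \beta}$ multiplied by a chain of Cauchy resolvents $(z_{\alpha_0} - X_{i(\alpha_0)})^{-1}\cdots(z_{\alpha_r} - X_{i(\alpha_r)})^{-1}$ with $\alpha_0 = \alpha$, $\alpha_r = \beta$.

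The key step is then to apply $\phi$ entry-wise and invoke Proposition \ref{FBCS}: each such chain collapses to $\prod_{j=0}^r (z_{\alpha_j} + i)^{-1}$, a value that is \emph{independent} of both the class indices $i(\alpha_j)$ \emph{and} the choice of independence regime (free, classical, Boolean, or equal). Resumming, one recognizes the result as the $(\alpha, \beta)$ entry of the Neumann expansion of $(D + i 1_{nk} + N)^{-1} = (B + i 1_{nk})^{-1}$, and analytic continuation on $M^+_{nk}(\mathbb{C})$ finishes the identification of the Cauchy transform. The \emph{In particular} clause is then immediate: all four scenarios produce exactly the same function $(B + i 1_{nk})^{-1}$ on $H^+(M_n(\mathbb{C}))$, while the algebras $\{\{X\}\cup M_n(\mathbb{C})\}''$ are manifestly distinct---for instance, case (1) yields the type $\mathrm{I}$ algebra $L^\infty(\mathbb{R})\otimes M_n(\mathbb{C})$, whereas case (3) produces a genuinely non-commutative free-product-type algebra.

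The principal technical hurdle will be justifying absolute convergence of the Neumann series and the interchange of the entry-wise $\phi$ with the infinite sum; both ought to follow directly from the operator-norm bound supplied by Lemma \ref{affresolvent} together with the norm-continuity of $\phi$ on $\mathcal{A}$.
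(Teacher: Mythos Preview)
Your proposal is correct and follows essentially the same approach as the paper: split $B$ into diagonal plus off-diagonal parts, expand the resolvent as a Neumann/geometric series on a small enough neighborhood, apply Proposition~\ref{FBCS} term-by-term to replace each resolvent chain by its scalar value $\prod_j (z_{\alpha_j}+i)^{-1}$, resum to $(B+i1_{nk})^{-1}$, and analytically continue. Your treatment is in fact slightly more explicit than the paper's in naming the convergence and interchange issues and in spelling out why the resulting operator algebras differ across the four cases.
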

\begin{proof}
Let $B = (b_{i,j})_{i,j = 1}^{nk} \in M_{nk}^{+}(\mathbb{C})$.
Let this algebra act on $\mathbb{C}^{nk}$ with orthonormal basis $\{ v_{i} \}_{i=1}^{nk}$.
Note that the associated vector states satisfy
$$ 0 < \Im{(\langle Bv_{i} , v_{i} \rangle)} = \Im{(b_{i,i})}.$$

We rewrite $B = D + B'$ where $D$ is diagonal and $B' \in M_{nk}(\mathbb{C})$ has $0$ on the diagonal entries.
We have
\begin{align}
 E[(B - X\otimes1_{k})^{-1}] & = E[(D - X\otimes1_{k}) + B')^{-1}]  \\ &= E[(D - X\otimes1_{k})^{-1} (1 - B'(D - X\otimes1_{k})^{-1} )^{-1}]
\end{align}
If we take an open neighborhood where $$\inf_{i=1, \ldots , nk} |b_{i,i}| >> \sup_{\ell \neq p} |b_{\ell,p}|$$
(a small open neighborhood of a diagonal $B$ , for instance), then 
the term $$(1 - B'(D - X\otimes1_{k})^{-1} )^{-1} $$
may be written as a geometric series.
Note that $$ (D - X\otimes1_{k})^{-1} =  \sum_{i = 1}^{n} \sum_{j=1}^{k}  (b_{i+j , i+j} - X_{i})^{-1}\otimes e_{i + j , i + j}$$ 
where $\{ e_{\ell , p} \}_{\ell , p = 1}^{nk}$ are the matrix units for $M_{nk}(\mathbb{C})$.
This implies that $$[B'(D - X\otimes1_{k})^{-1}]^{p} $$ 
is a complex polynomial in the variables $$ (b_{i + 1, i + 1} - X_{1})^{-1} ,  (b_{i + 2, i + 2} - X_{2})^{-1} , \ldots ,  (b_{i + n, i + n} - X_{n})^{-1} $$ for $i = 1, 2, \ldots , k$.  Moreover, each of the diagonal entries satisfy $\Im{(b_{j,j})} > 0$ for $j=1,2, \ldots , nk$.
By \eqref{FBCS}  , we have that
$$E ( [B'(D - X\otimes1_{k})^{-1}]^{p} ) = [B'(D + i1_{nk})^{-1}]^{p}. $$
We conclude that 
$$ G_{X}^{(k)}(B) =   (B + i1_{nk})^{-1}$$
on this open neighborhood and the full claim follows by analytic continuation.

\end{proof}

\begin{remark}
We are unable at this time to compute the convolution of operators $$ X_{1} = \left[ \begin{array}  {cc}
    x_{1} & 0 \\
 0 & 0 \\
   \end{array} \right] \  ; \ \    X_{2} = \left[ \begin{array}  {cc}
    0 & 0 \\
 0 & x_{2} \\
   \end{array} \right]  $$
where $x_{1}$ and $x_{2}$ are free copies of the Cauchy distribution in a space $(\mathcal{A} , \tau)$ [the conditional expectation in this setting is $\tau \otimes 1_{2}$.  This is generally a somewhat difficult question since the Cauchy transform must generally be known for all matrix dimensions $n$.  The only positive results of this type may be found in \cite{AWJac} where this problem was circumvented via combinatorial methods.
\end{remark}

\section{The R-transform for Affiliated Operators with B Finite Dimensional.}\label{RTransform1}

\begin{theorem}\label{Rexist}
Assume that $(\mathcal{A}, E, \mathcal{B})$ is an operator valued, tracial  probability space where $\mathcal{B}$ is finite dimensional.
For $\lambda \in \mathbb{R}^{+}$, define
$$d_{n}(\lambda) = i\lambda \left[ \begin{array}  {cc}
    1 & 0 \\
 0 & -1 \\
   \end{array} \right]\otimes1_{n} \in \Omega_{n}(\mathcal{B}). $$
Let $\{ X_{i}\}_{i \in I} \subset  \tilde{\mathcal{A}}^{sa}$ denote a family of random variables where the scalar distributions $\{ \tau \circ e_{X_{i}} \}_{i \in I}$
are a tight family.  Then, there exists a sequence $\lambda_{n} \in \mathbb{R}^{+}$, decreasing over $n$,
and a positive decreasing function $$p_{n}: (0, \lambda_{n})  \mapsto \mathbb{R}^{+}$$
so that $p_{n}(\lambda) = O(\lambda)$  and the set
 $$B_{p_{n}(\lambda)}(d_{n}(\lambda))$$ is in the domain of $\mathcal{R}_{X_{i}}$ for all $n \in \mathbb{N}$,
 $\lambda \leq \lambda_{n}$ and  $i \in I$.
\end{theorem}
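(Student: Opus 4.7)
The plan is to apply the Bloch-type theorem (Theorem~\eqref{Bloch}) to the Cauchy transform $G^{(2n)}_{X_i}$ centered at the base point $z_0 = -d_n(\Lambda) = i\Lambda\,\mathrm{diag}(-1,1)\otimes 1_n$, chosen so that $z_0^{-1} = d_n(1/\Lambda) = d_n(\lambda)$ for $\lambda = 1/\Lambda$ small. The large-$\Lambda$ asymptotic $G^{(2n)}_{X_i}(z_0) \approx z_0^{-1} = d_n(\lambda)$ then suggests that Bloch will produce a ball around $d_n(\lambda)$ lying in the image of $G^{(2n)}_{X_i}$, hence in the domain of $(G^{(2n)}_{X_i})^{\langle -1\rangle}$ and of $\mathcal{R}_{X_i}$.

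First, I would verify the Bloch hypotheses with constants uniform over $i \in I$. Lemma~\eqref{affresolvent} gives $\|(z_0+V-X_i \otimes 1_{2n})^{-1}\| \leq 2/\Lambda$ on the ball $B_R(z_0)$ with $R = \Lambda/2$, so $G^{(2n)}_{X_i}$ is defined there and bounded by $M = 2/\Lambda$. For the derivative
$$DG^{(2n)}_{X_i}(z_0)[W] = -E_{2n}\bigl[(z_0 - X_i\otimes 1_{2n})^{-1}\,W\,(z_0 - X_i\otimes 1_{2n})^{-1}\bigr],$$
in the scalar case $\mathcal{B} = \mathbb{C}$ this map is diagonal in the standard basis of $M_{2n}(\mathbb{C})$, with multipliers of the form $-\int (\pm i\Lambda - t)^{-2}\,d\mu_{X_i}$ and $-\int (t^2+\Lambda^2)^{-1}\,d\mu_{X_i}$. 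Tightness of $\{\tau\circ e_{X_i}\}_{i \in I}$ makes each such scalar integral equal $\Lambda^{-2}(c_0 + o(1))$ with $|c_0|$ bounded below uniformly in $i$ for $\Lambda$ large, yielding $a := \|DG^{(2n)}_{X_i}(z_0)^{-1}\|^{-1} \geq c_1/\Lambda^2$ uniformly. For general finite-dimensional $\mathcal{B} \cong \bigoplus_j M_{k_j}(\mathbb{C})$ the same analysis applies after embedding into a matrix algebra of size depending on $n$ and $\mathcal{B}$; this is where finite-dimensionality enters essentially.

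Plugging $R \sim \Lambda$, $M \sim \Lambda^{-1}$, $a \sim \Lambda^{-2}$ into Bloch yields a biholomorphic image containing $B_P(G^{(2n)}_{X_i}(z_0))$ with $P = R^2 a^2/(8M) \sim \lambda$. A parallel tightness estimate shows $\|G^{(2n)}_{X_i}(z_0) - d_n(\lambda)\| = o(\lambda)$ uniformly in $i$, so for $p_n(\lambda) = P/2 = O(\lambda)$ the ball $B_{p_n(\lambda)}(d_n(\lambda))$ sits inside that image, placing it in the domain of $\mathcal{R}_{X_i}$ for every $i$; the threshold $\lambda_n$ is the largest value at which these uniform estimates hold, and it is decreasing in $n$ because the constants degrade with the matrix dimension. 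The main obstacle is the uniform invertibility bound on $DG^{(2n)}_{X_i}(z_0)$: both finite-dimensionality of $\mathcal{B}$ (so $DG$ acts on a finite-dimensional space whose nondegeneracy reduces to finitely many scalar conditions) and tightness of $\{\tau\circ e_{X_i}\}$ (providing uniform control on those scalar moments) are essential, and removing either would allow the derivatives to degenerate as $i$ varies, giving the $\mathcal{R}$-transforms incompatible domains.
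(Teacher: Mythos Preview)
Your proposal is correct and is essentially the paper's argument in a different coordinate. The paper introduces the auxiliary function $K^{(2n)}(w)=G^{(2n)}_{X_i}(w^{-1})$ and applies the Bloch theorem to $K$ at the base point $w=d_n(\lambda)$; you instead apply Bloch directly to $G^{(2n)}_{X_i}$ at $z_0=-d_n(\Lambda)=d_n(\lambda)^{-1}$. These are equivalent under the substitution $w=z^{-1}$: since $z_0$ commutes with $X_i\otimes 1_{2n}$, one checks that $-z_0\,DG^{(2n)}_{X_i}(z_0)[\,\cdot\,]\,z_0$ coincides exactly with the paper's $\delta K^{(2n)}(d_n(\lambda);\cdot)$, so your lower bound $a\gtrsim\Lambda^{-2}$ on $\|DG^{-1}\|^{-1}$ is precisely the paper's bound $\|\delta K(d_n(\lambda);\cdot)-Id\|<\epsilon$ after conjugation. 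The Bloch parameters then match up ($R\sim\Lambda\leftrightarrow R(\lambda)\sim\lambda$, $M\sim\Lambda^{-1}\leftrightarrow M(\lambda)\sim\lambda$, $a\sim\Lambda^{-2}\leftrightarrow a\sim 1$), giving the same $P\sim\lambda$, and both proofs finish with the same $o(\lambda)$ shift $\|G(z_0)-d_n(\lambda)\|=\|K(d_n(\lambda))-d_n(\lambda)\|$.

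The one place where you should sharpen your write-up is the passage from $\mathcal{B}=\mathbb{C}$ to general finite-dimensional $\mathcal{B}$. Your claim that $DG(z_0)$ is ``diagonal in the standard basis of $M_{2n}(\mathbb{C})$'' does not persist for non-scalar $\mathcal{B}$, and the remark about ``embedding into a matrix algebra'' does not immediately give the required uniform invertibility. The paper handles this by writing $\delta K(d_n(\lambda);h)-h$ as a sum of three terms of the form $E_n[A\,h\,B]$ with $A,B$ built from $d_n(\lambda)X_n(1-d_n(\lambda)X_n)^{-1}$, and then bounding each term via a finite family of vector states $\{p_{i,2n}\}$ (this is where finite-dimensionality enters), Cauchy--Schwarz, and traciality; the resulting scalar quantities $\tau\bigl[(d_n(\lambda)X_n(1-d_n(\lambda)X_n)^{-1})^2\bigr]$ reduce to one-variable integrals against $\tau\circ e_{X_i}$, and tightness makes these vanish uniformly as $\lambda\downarrow 0$. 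In your coordinates this amounts to showing that $\|z_0\,DG(z_0)[z_0\,\cdot\,z_0]\,z_0 + Id\|\to 0$ uniformly, which is the same computation. Either route works, but the paper's normalization to a perturbation of the identity avoids having to name an explicit inverse for the leading part of $DG(z_0)$.
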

\begin{proof}

We define a function
$K = \{ K^{(n)} \}_{n=1}^{\infty}$  with domain $\Omega(\mathcal{B})^{-1}$  as follows:
$$ K^{(n)}(w):=  G^{(n)}(w^{-1}) .$$

Our main tool for controlling our inverse will be Theorem \eqref{Bloch}.  To do so, we first  show that there exists
a $\lambda_{n} > 0$ such that
\begin{equation}\label{IFTclaim}
\| \delta K^{(2n)}(d_{n}(\lambda) ;\cdot ) - Id \| < \epsilon
\end{equation}
for all $\lambda \leq \lambda_{n}$.  Moreover, the choice of $\lambda_{n}$ is uniform
for random variables with tight $\tau \circ e_{X}$ .


Observe that, for $h \in M_{2n}(\mathcal{B}) $ with  $\|h\| \leq 1$, we have
$$ \delta K^{(2n)}(w;h)  = E_{n}\left[ (w^{-1} -X_{n})^{-1} w^{-1}hw^{-1}(w^{-1}-X_{n})^{-1} \right]$$
Thus, we have that
\begin{align}
\| \delta K^{(2n)}(d_{n}& (\lambda);h)  - h\|  = \| E_{n}[(1 - d_{n}(\lambda) X_{n})^{-1}h(1 - X_{n}d_{n}(\lambda))^{-1} - h] \| \\
&\leq \label{exampleterm} \| E_{n}[ (d_{n}(\lambda) X_{n})(1 - d_{n}(\lambda) X_{n})^{-1}h(1 -X_{n}d_{n}(\lambda))^{-1}] \| \\ 
& \label{exampletermb} +  \| E_{n}[(1 - d_{n}(\lambda) X_{n})^{-1}h(1 - X_{n}d_{n}(\lambda))^{-1} (X_{n}d_{n}(\lambda))] \|  \\
& \label{exampletermc}  +  \| E_{n}[  (d_{n}(\lambda) X_{n})(1 - d_{n}(\lambda) X_{n})^{-1}h(1 - X_{n}d_{n}(\lambda))^{-1} (X_{n}d_{n}(\lambda))] \| 
\end{align}
Focusing on a \eqref{exampleterm}, let $ \{p_{i, 2n}\}_{i=1}^{N(2n) } \subset M_{2n}(\mathcal{B})$ denote positive vectors that  induce the weak topology as vector states.  That is,  $\|p_{i,2n} \|_{L^{2}} = 1$ and let $$C_{n} = \max_{i=1, \ldots , N(2n)} \|\ p_{i,2n} \|$$
where this is taken to be the norm on $\mathcal{A}$.
\begin{align}
\| E_{n}[ (d_{n}(\lambda) X_{n})&(1 - d_{n}(\lambda) X_{n})^{-1}h(1 -X_{n}d_{n}(\lambda))^{-1}] \| \\ & = \sup_{i=1}^{N(2n)} \tau\left(  E_{n}[ (d_{n}(\lambda) X_{n})(1 - d_{n}(\lambda) X_{n})^{-1}h(1 -X_{n}d_{n}(\lambda))^{-1}] p_{i} \right) \nonumber \\
&= \sup_{i=1}^{N(2n)} \nonumber \tau\left(  (d_{n}(\lambda) X_{n})(1 - d_{n}(\lambda) X_{n})^{-1}h(1 -X_{n}d_{n}(\lambda))^{-1} p_{i} \right) \\
&\leq \label{obiwan} \|h\| C_{n} \tau\left( \left[\frac{d_{n}(\lambda) X_{n}}{1 -d_{n}(\lambda) X_{n}}\right]^{2} \right)^{1/2} \tau\left( \left[\frac{1}{1 - d_{n}(\lambda) X_{n}}\right]^{2} \right)^{1/2}
\end{align}
where \eqref{obiwan} follows through Cauchy Schwarz, traciality and positivity of $\tau$.
Also note that $d_{n}(\lambda)$ and $X_{n}$ are diagonal and $d_{n}(\lambda)$ has scalar entries, so we are in the commutative setting.
Utilizing the functional calculus $e_{X}$ and focusing on a single one of these diagonal entries,  we have
\begin{equation}\label{obiwanA}
\left| \tau\left( \left[\frac{1}{1 - i\lambda X}\right] \right) \right| \leq  \int_{\mathbb{R}} \left| \frac{1}{1 - i\lambda t} \right| \tau \circ e_{X}(t) \leq 1
\end{equation}
\begin{equation}\label{obiwanB}
\lim_{\lambda \downarrow 0} \tau\left( \left[\frac{i\lambda X}{1 - i\lambda X}\right] \right) = \lim_{\lambda \downarrow 0} \int_{\mathbb{R}} \frac{i\lambda t}{1 - i\lambda t}\tau \circ e_{X}(t) = 0
\end{equation}
where the convergence in \eqref{obiwanB} is uniform for tight $\tau \circ e_{X}$.
Thus, \eqref{obiwan} is smaller than $\epsilon/3$ for $\lambda < \lambda_{n}$ small enough
so that the same holds for  \eqref{exampleterm}.  Similar proofs bound \eqref{exampletermb} and \eqref{exampletermc} so that
\eqref{IFTclaim} holds.

In order to invoke  Theorem \eqref{Bloch}, we must show that $K$ is bounded.  Define
\begin{equation}
\tilde{K}^{(2n)}(y) = K^{(2n)}(d_{n}(\lambda) + y)  - K^{(2n)}(d_{n}(\lambda)): B_{R(\lambda)}(\{ 0 \}) \mapsto B_{M(\lambda)}(\{0\}) 
\end{equation}

Let $R(\lambda) = \lambda/2$.
Determining $M(\lambda)$, observe that
\begin{align}
\|\tilde{K}^{(n)}& (y)  \| =  \| G^{(n)}(d_{n}(\lambda)+y)^{-1} - G^{(n)}(d_{n}(\lambda))^{-1}) \|  \label{MF1} \\
&= \label{MF2} \left\|E_{n} \left[  [(d_{n}(\lambda)+y)^{-1} - X]^{-1} (d_{n}(\lambda) + y)^{-1} yd_{n}(\lambda)^{-1}  [d_{n}(\lambda)^{-1} - X]^{-1}\right]  \right\| \\
& \label{MF3} \leq \frac{\lambda}{2} \| [(d_{n}(\lambda) + y)^{-1} - X]^{-1} (d_{n}(\lambda) + y)^{-1}\| \| d_{n}(\lambda)^{-1}  [d_{n}(\lambda)^{-1} - X]^{-1}\| 
\end{align}
where \eqref{MF3} is contractivity of $E$.

Observe that $$\sigma(d_{n}(\lambda) + y) \subset B_{\lambda/2}(\pm i\lambda) $$
so that
$$\sigma((d_{n}(\lambda) + y)^{-1}) =\left\{ \frac{1}{z} : z\in \sigma((d_{n}(\lambda) + y))\right\}  \subset  \left\{ \frac{1}{z} : z\in  B_{\lambda/2}(\pm i\lambda)\right\}$$
by the spectral mapping theorem (3.3.6 in \cite{KRe}, for instance).
Since the norm agrees with the spectral radius in a C$^{\ast}$-algebra, we have that
\begin{equation}\label{jake}
\| (d_{n}(\lambda) + y)^{-1} \| \leq \sup_{z \in B_{\lambda/2}(\pm i\lambda)} \left| \frac{1}{z}\right| = \frac{2}{\lambda}.
\end{equation}
Moreover, using the spectral mapping theorem,
$$\Im((d_{n}(\lambda) + y)^{-1} - X)) = \Im((d_{n}(\lambda) + y)^{-1}) \geq \inf_{z \in B_{\lambda/2}(\pm i\lambda)} \Im{\left( \frac{1}{z} \right)}= \frac{2}{3\lambda} 1_{2n}$$
and, using Lemma \eqref{affresolvent}, we conclude
\begin{equation}\label{elwood}
 \| [(d_{n}(\lambda) + y)^{-1} - X]^{-1} \| \leq \frac{3\lambda}{2}
\end{equation}
A similar proof shows that 
$$ \| d_{n}(\lambda)^{-1}  [d_{n}(\lambda)^{-1} - X]^{-1}\|  \leq 1 $$
Combining \eqref{jake} and \eqref{elwood}, we have that
\eqref{MF3} is bounded by 
$$ M(\lambda)  = \frac{3\lambda}{2}. $$

By Theorem \eqref{Bloch},
we have that \begin{equation}\label{setcontainment}
 B_{P(\lambda)}(K^{(n)}(d_{n}(\lambda))) \subset K^{(n)}\left(B_{r(\lambda)}(d_{n}(\lambda)) \right)
\end{equation}
where $$P(\lambda) = \frac{R^{2}a^{2}}{8M} =  \frac{\lambda(1-\epsilon)^{2}}{48}\ ; \ \ r(\lambda) = \frac{R^{2}a}{4M} =\frac{\lambda(1-\epsilon)}{24}$$
Thus , $ B_{P(\lambda)}(K^{(n)}(d_{n}(\lambda))) $ is in the domain of $(K^{(n)})^{\langle -1 \rangle}$.

Note that $ K^{(n)}(d_{n}(\lambda)) - d_{n}(\lambda) $ is diagonal.
Thus, in estimating the norm,  we need only estimate this in an entry-wise manner. 
To restrict to a single entry, we define $$k(w) = E((w^{-1} - X)^{-1}) : \mathcal{B}^{+} \mapsto \mathcal{B}^{+}$$
Estimating the norm, we have that
\begin{align}
\| k(i\lambda) - i\lambda \| & = \left\| E\left[ \left(\frac{1}{i\lambda} - X \right)^{-1} - i\lambda \right] \right\| \\
&=  \left\| \lambda E\left[ \int_{R} \frac{i\lambda}{1 - i\lambda t} -  \frac{i\lambda + \lambda^{2} t}{1 - i\lambda t} de_{X}(t) \right] \right\| \\
&= \label{iowa}  \left\| \lambda E\left[ \int_{R} \frac{-\lambda t}{1 - i\lambda t} de_{X}(t) \right] \right\| \\
&= o(\lambda)
\end{align}
uniformly for tight families of $X$ (since the integral converges to $0$ weakly, $E$ is weakly continuous, and the weak and norm topologies agree in finite dimensions).  Thus, for $\lambda_{n}$ small enough, 
\begin{equation}\label{othersetcontainment}
B_{p(\lambda)}(d_{n}(\lambda)) \subset  B_{P(\lambda)}(K^{(n)}(d_{n}(\lambda))) \subset \mathit{dom}\left( (K^{(n)})^{\langle -1 \rangle} \right) 
\end{equation}
 and the radius $p(\lambda)$ is uniform for tight families of $X$ and satisfies $P(\lambda) - p(\lambda) = o(\lambda)$ by \eqref{iowa}.

The claim follows for the $\mathcal{R}$-transform since $$(K^{(n)})^{\langle -1 \rangle}(w)^{-1} =  (G^{(n)})^{\langle -1 \rangle}(w)$$
so that these functions have the same domain.  This completes the proof of our theorem.
\end{proof}

We note that Theorem 1.4 in \cite{InvFun} does not apply to our setting since our bounds on the derivative are not uniform over $n$.
Moreover, we note here that the R-transform that we have generated is not a non-commutative function in the sense that its domain does not, in general, contain non-commutative open balls (that is, balls whose radii are uniform over $n$).  It may be possible for this to have a larger domain if we assume a second moment, but we cannot prove this at the time.

\begin{corollary}\label{Rdefines}
In the setting of the previous theorem, the function $\mathcal{R}_{X}$  restricted to $\sqcup_{n=1}^{\infty} B_{\epsilon_{n}}(d_{n}(\lambda_{n}))$   defines the operator algebra $\{X , \mathcal{B} \}''$ up to spatial isomorphism.
\end{corollary}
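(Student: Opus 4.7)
The plan is to invert the construction of the $\mathcal{R}$-transform step by step, recover the Cauchy transform on an open subset of $\Omega_n(\mathcal{B})$ for each $n$, and then invoke the rigidity of the Cauchy transform established in Theorem \ref{unboundiso}.

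First, from the defining relation $\mathcal{R}_X^{(n)}(w) = (G_X^{(n)})^{\langle -1 \rangle}(w) - w^{-1}$, any knowledge of $\mathcal{R}_X^{(n)}$ on $B_{\epsilon_n}(d_n(\lambda_n))$ gives full knowledge of $(G_X^{(n)})^{\langle -1 \rangle}$ there. The proof of Theorem \ref{Rexist} already established, via Theorem \ref{Bloch}, that the auxiliary function $K^{(n)}$ maps a neighborhood of $d_n(\lambda_n)$ biholomorphically onto a ball $B_{P(\lambda_n)}(K^{(n)}(d_n(\lambda_n)))$; passing through the reciprocal relationship $K^{(n)}(w)^{-1} = G^{(n)}(w^{-1})$ and the identity $(K^{(n)})^{\langle -1\rangle}(w)^{-1} = (G^{(n)})^{\langle -1\rangle}(w)$ used in that proof, we obtain an explicit open subset $\mathcal{U}_n \subset \Omega_n(\mathcal{B})$ on which $G_X^{(n)}$ can be reconstructed by inversion from the data of $\mathcal{R}_X^{(n)}$ on $B_{\epsilon_n}(d_n(\lambda_n))$ alone.

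Next, I would exploit the connectedness of $\Omega_n(\mathcal{B})$ noted in Remark \ref{setremark}. Because $G_X^{(n)}$ is an analytic function on the connected open set $\Omega_n(\mathcal{B})$, and we have recovered its values on the open subset $\mathcal{U}_n$, the principle of analytic continuation determines $G_X^{(n)}$ uniquely on all of $\Omega_n(\mathcal{B})$. Performing this procedure for every $n \in \mathbb{N}$ recovers the full non-commutative Cauchy transform $G_X$ on $\Omega(\mathcal{B}) = \sqcup_{n=1}^\infty \Omega_n(\mathcal{B})$.

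Finally, Theorem \ref{unboundiso} asserts that the Cauchy transform $G_X$ restricted to $\Omega(\mathcal{B})$ determines the tracial W$^\ast$-algebra $\{X, \mathcal{B}\}''$ up to spatial isomorphism, which gives the conclusion. The main obstacle is the second step: ensuring that the open set $\mathcal{U}_n$ obtained from inversion truly sits inside $\Omega_n(\mathcal{B})$ (rather than merely inside the resolvent of $X$ after some limiting procedure), and that the inversion is actually well-defined as a function rather than a multi-valued branch. Both points follow from the Bloch-theoretic argument already used in Theorem \ref{Rexist}, which provides a genuine biholomorphism, but invoking those bounds carefully to pin down a concrete open neighborhood of the diagonal point $i\lambda \cdot \mathrm{diag}(1,-1) \otimes 1_n$ in the resolvent is the technical heart of the corollary.
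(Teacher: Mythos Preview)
Your proposal is correct and follows essentially the same route as the paper: recover $(G_X^{(2n)})^{\langle -1\rangle}$ from $\mathcal{R}_X^{(2n)}$, identify an open set in $\Omega_n(\mathcal{B})$ in its image, analytically continue $G_X^{(2n)}$ to all of $\Omega_n(\mathcal{B})$, and apply Theorem~\ref{unboundiso}. The only minor difference is in verifying that the image lands in $\Omega_n(\mathcal{B})$: where you invoke the Bloch-type bounds from Theorem~\ref{Rexist}, the paper instead uses the noncommutative-function (direct-sum) property to observe that $(G_X^{(2n)})^{\langle -1\rangle}(d_n(\lambda_n))$ is diagonal with alternating $\mathcal{B}^+$/$\mathcal{B}^-$ entries and hence lies in $\Omega_n(\mathcal{B})$ by inspection.
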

\begin{proof}
By assumption $(G_{X}^{(2n)})^{\langle -1 \rangle}$ has domain $B_{\epsilon_{n}}(d_{n}(\lambda_{n}))$ for some $\epsilon_{n} > 0$.  Since this is a noncommutative function, $$(G_{X}^{(2n)})^{\langle -1 \rangle}(d_{n}(\lambda_{n})) = \left[ \begin{array}  {ccccc}
    (G_{X}^{(1)})^{\langle -1 \rangle}(i) & 0 & 0 & \cdots & 0 \\
0 & (G_{X}^{(1)})^{\langle -1 \rangle}(-i) & 0 & \cdots & 0 \\
0 & 0  & (G_{X}^{(1)})^{\langle -1 \rangle}(i) & \cdots & 0 \\
\ & \vdots & \ & \vdots & \ \\
0 & 0  & 0 & \cdots & (G_{X}^{(1)})^{\langle -1 \rangle}(-i) \\
   \end{array} \right]$$
which is an element of $\Omega_{n}(\mathcal{B})$.  Thus, the image of $B_{\epsilon_{n}}(d_{n}(\lambda_{n}))$ contains a neighborhood of $$(G_{X}^{(2n)})^{\langle -1 \rangle}(d_{n}(\lambda_{n})).$$  Inverting this function, we recover the Cauchy transform $G_{X}^{(n)}$ on this set and, by analytic continuation and connectivity, we may recover it on $\Omega_{n}(\mathcal{B})$.  By Theorem \eqref{unboundiso}, we have our algebra isomorphism.
\end{proof}

We expect Theorem \eqref{Rexist} to be true for more general domains.  Indeed, if we replace $d_{n}(\lambda)$ with matrices with strictly imaginary operators on the diagonal, then the same should hold.  We chose this  domain since it has all of the properties that we required (connectivity, Corollary \eqref{Rdefines}, etc.) but with a greatly simplified proof.

\begin{theorem}\label{Rsum}
Assume that $(\mathcal{A}, E, \mathcal{B})$ is an operator valued, tracial  probability space where $\mathcal{B}$ is finite dimensional.
Let $X , Y \in \tilde{\mathcal{A}}^{sa}$ denote $\mathcal{B}$-free random variables.
Then $\mathcal{R}_{X}$ and $\mathcal{R}_{Y}$ exist on a fixed open subset of $\Omega$ and satisfy the equality
$$ \mathcal{R}_{X} + \mathcal{R}_{Y} = \mathcal{R}_{X+Y}. $$
\end{theorem}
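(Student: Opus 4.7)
The plan is to reduce to the known bounded case by spectral truncation, extend the bounded identity to the uniform $\mathcal R$-transform domain of Theorem \eqref{Rexist} by analytic continuation, and then pass to the limit. First, set $p_k := e_X([-k,k])$, $q_k := e_Y([-k,k])$, $X_k := Xp_k$, $Y_k := Yq_k$. Each $X_k$ lies in $\{X,\mathcal{B}\}''$ and each $Y_k$ in $\{Y,\mathcal{B}\}''$, so the $\mathcal{B}$-freeness of $X$ and $Y$ immediately transfers to $\mathcal{B}$-freeness of the bounded pair $X_k,Y_k$. The operator-valued bounded result of Voiculescu \cite{V2} then gives
$$\mathcal R^{(n)}_{X_k+Y_k}(b)=\mathcal R^{(n)}_{X_k}(b)+\mathcal R^{(n)}_{Y_k}(b)$$
on a (small) neighborhood of $0$ whose size depends on $\|X_k\|+\|Y_k\|$.

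Next I would build a common domain. The scalar distribution $\tau\circ e_{X_k}$ differs from $\tau\circ e_X$ only by mass $\tau(1-p_k)$ placed at the origin, so $\{\tau\circ e_X,\tau\circ e_{X_k}\}_k$ is tight; similarly for $Y$. Since $\tilde{\mathcal{A}}$ is a topological $*$-algebra under convergence in measure, $X_k+Y_k\to X+Y$ in measure, which yields tightness of $\{\tau\circ e_{X+Y},\tau\circ e_{X_k+Y_k}\}_k$ as well. Theorem \eqref{Rexist} then produces a uniform ball $B_{p_n(\lambda)}(d_n(\lambda))$ on which all six $\mathcal R$-transforms ($\mathcal R_X,\mathcal R_Y,\mathcal R_{X+Y}$, and the three truncated versions) are simultaneously defined and analytic. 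For each fixed $k$, by taking $\lambda$ sufficiently small (so that the ball $B_{p_n(\lambda)}(d_n(\lambda))$ is contained in the bounded-case neighborhood of the previous paragraph), the identity holds on this small ball, and analytic continuation along the connected set $\bigcup_{\lambda<\lambda_n}B_{p_n(\lambda)}(d_n(\lambda))$ extends the equality $\mathcal R^{(n)}_{X_k+Y_k}=\mathcal R^{(n)}_{X_k}+\mathcal R^{(n)}_{Y_k}$ to the full uniform domain.

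The final step is passage to the limit $k\to\infty$. If $G^{(n)}_{X_k}\to G^{(n)}_X$, $G^{(n)}_{Y_k}\to G^{(n)}_Y$, and $G^{(n)}_{X_k+Y_k}\to G^{(n)}_{X+Y}$ in the pointwise weak topology on $\Omega_n(\mathcal{B})$, then the local Lipschitz bound \eqref{lipschitz}, together with the Bloch-type inversion argument of Theorem \eqref{Rexist}, transfers this convergence to convergence of the corresponding $\mathcal R$-transforms on the common domain (uniformly on compacta). Taking $k\to\infty$ in the identity of the previous paragraph then gives $\mathcal R^{(n)}_{X+Y}=\mathcal R^{(n)}_X+\mathcal R^{(n)}_Y$ on the uniform domain, as desired.

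The main obstacle is the convergence $G^{(n)}_{X_k+Y_k}\to G^{(n)}_{X+Y}$, since $X_k+Y_k$ is not itself a spectral truncation of $X+Y$ and so the direct calculation in the necessity direction of Theorem \eqref{MainTheorem} does not immediately apply. I would handle this via the resolvent identity
$$
(b-(X_k+Y_k)\otimes 1_n)^{-1}-(b-(X+Y)\otimes 1_n)^{-1}
=(b-(X_k+Y_k)\otimes 1_n)^{-1}\bigl[(X-X_k+Y-Y_k)\otimes 1_n\bigr](b-(X+Y)\otimes 1_n)^{-1},
$$
combined with the uniform resolvent bounds of Lemma \eqref{affresolvent} on $\Omega_n(\mathcal{B})$ and a Cauchy--Schwarz/trace-smallness estimate of exactly the type used in lines \eqref{P6}--\eqref{P9} of the proof of Theorem \eqref{MainTheorem}, applied now to the trace-small projections $1-p_k$ and $1-q_k$ rather than to a single truncation. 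The analogous (and simpler) convergence for $X_k\to X$ and $Y_k\to Y$ is essentially already verified in that necessity direction.
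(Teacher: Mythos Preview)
Your overall architecture is exactly the paper's: truncate spectrally, invoke the bounded additivity result for $X_k,Y_k$, use Theorem \eqref{Rexist} on the tight family to obtain a common domain, upgrade convergence of Cauchy transforms to convergence of their local inverses via \eqref{lipschitz} and the Bloch bound, and pass to the limit. Your analytic-continuation remark (pushing the bounded identity from a $k$-dependent neighbourhood of $0$ out to the uniform balls $B_{p_n(\lambda)}(d_n(\lambda))$) and your tightness argument for $\{X_k+Y_k\}$ via convergence in measure are both fine and in fact a bit more explicit than what the paper writes.

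There is, however, a genuine gap in the step you flag as ``the main obstacle''. Your proposed resolvent identity produces the middle factor $(X-X_k)+(Y-Y_k)=X(1-p_k)+Y(1-q_k)$, and the Cauchy--Schwarz manoeuvre of lines \eqref{P6}--\eqref{P9} then requires a uniform bound on $\|X(b-(X+Y)\otimes 1_n)^{-1}\|$ (and the analogous $Y$-term). But Lemma \eqref{affresolvent} only yields $\|a(b-a\otimes 1_n)^{-1}\|\le 1+\|b\|/\epsilon$ when the operator in front \emph{matches} the operator in the resolvent; with $a=X+Y$ one controls $(X+Y)(b-(X+Y))^{-1}$, not $X(b-(X+Y))^{-1}$ separately. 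For general unbounded $X,Y$ (no second moment), there is no reason for $X(b-(X+Y))^{-1}$ to be bounded, or even to lie in $L^2(\mathcal A,\tau)$, so the estimate does not close.

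The paper's fix is precisely to avoid ever splitting $X$ from $Y$ against the $(X+Y)$-resolvent. It introduces $r_k=p_k\wedge q_k$ and uses the algebraic identity $r_k(X_k+Y_k)=r_k(X+Y)$ to write
\[
G_{X_k+Y_k}-G_{X+Y}=\bigl[G_{X_k+Y_k}-G_{r_k(X_k+Y_k)}\bigr]+\bigl[G_{r_k(X+Y)}-G_{X+Y}\bigr].
\]
In each bracket the resolvent identity now produces a middle factor $(1-r_k)(X_k+Y_k)$, respectively $(1-r_k)(X+Y)$, that \emph{does} match the operator in one of the flanking resolvents; Lemma \eqref{affresolvent} then gives the needed uniform bound, and $\tau(1-r_k)\to 0$ (from $\tau(r_k)\ge \tau(p_k)+\tau(q_k)-1$) finishes the estimate. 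If you insert this $r_k$ device in place of your direct splitting, the rest of your argument goes through.
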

\begin{proof}
Let 
\begin{equation}
p_{k} = e_{X}([-k,k]) \ ; \ \ q_{k} = e_{Y}([-k,k])  \ ; \ \ r_{k} = p_{k} \wedge q_{k} \ ; \ \ X_{k} = Xp_{k} \ ; \ \ Y_{k} = Yq_{k}.
\end{equation}
Observe that 
\begin{equation}\label{grant}
 r_{k}X = r_{k}p_{k}X = r_{k}X_{k} \ ; \ \ r_{k}Y = r_{k}q_{k}Y = r_{k}Y_{k}.
\end{equation}
We claim that 
\begin{equation}\label{Gclaim1}
G^{(2n)}_{X_{k}} \rightarrow G^{(2n)}_{X}
\end{equation}
\begin{equation}\label{Gclaim2}
G^{(2n)}_{X_{k} + Y_{k}} \rightarrow G^{(2n)}_{X + Y}
\end{equation}
on compact subsets of the $\Omega_{n}(\mathcal{B})$.

We proved \eqref{Gclaim1}  in line \eqref{P9} of the proof of Theorem \eqref{MainTheorem}.
Focusing on \eqref{Gclaim2}, 
we have that
\begin{equation}\label{Gclaim3}
G_{X_{k} + Y_{k}} - G_{X + Y} = [G_{X_{k} + Y_{k}} - G_{r_{k}X_{k} + r_{k}Y_{k}  }] + [ G_{r_{k}X + r_{k}Y } - G_{X + Y}]
\end{equation}
where this follows from \eqref{grant}.  Focusing on these terms individually, let $\{ p_{i} \}_{i=1}^{N(n)}$ generate the norm topology on $M_{n}(\mathcal{B})$ as vector states.

\begin{align}
\tau & \left( E\left[  (r_{k}(X_{k} + Y_{k}) - b)^{-1} - (X_{k} + Y_{k} - b)^{-1} \right] p_{i}\right) \\ \label{JHC1} &=\tau\left( (r_{k}(X_{k} + Y_{k}) - b)^{-1} (r_{k} - 1)(X_{k} + Y_{k}) (X_{k} + Y_{k} - b)^{-1}p_{i} \right) \\
&  \label{JHC2} \leq \tau(1 - r_{k})^{1/2}\tau\left( \left| (X_{k} + Y_{k}) (X_{k} + Y_{k} - b)^{-1}p_{i}   (r_{k}(X_{k} + Y_{k}) - b)^{-1}\right|^{2} \right)^{1/2}\\
&  \label{JHC3} \leq  \tau(1 - r_{k})^{1/2} \left[ 1 + \frac{\| \Im b \|}{\epsilon} \right] \|p_{i}\| \frac{1}{\epsilon}
\end{align}
where \eqref{JHC1} is bimodularity and contractivity of $E$, \eqref{JHC2} is Cauchy Schwarz and \eqref{JHC3} is Lemma \eqref{affresolvent} where we assume that $ |\Im{(b)}| > \epsilon1_{n}$  on the compact set from which the $b$ are drawn.
Since $$\tau(r_{k}) \geq \tau(p_{k}) + \tau(q_{k}) - 1 $$
we conclude that $\tau(r_{k})$ converges to $1$.  As we have a finite number of $b_{i}$, this proves that \eqref{JHC3} converges to $0$.  The proof where $X_{k} + Y_{k}$ are replaced by $X + Y$ is identical.  We conclude that \eqref{Gclaim3} converges to $0$, proving \eqref{Gclaim2}.

Now, assuming that $G_{X_{k}}$ converges to $G_{X}$ on compact subsets of the resolvent we show that
$G_{X_{k}}^{\langle -1 \rangle }$ converges to $G_{X}^{\langle -1 \rangle }$ uniformly the sets $B_{p(\lambda)}(d_{n}(\lambda))$ where the notation is lifted from Theorem \eqref{Rexist}.

In the context of Theorem \eqref{Rexist}, consider a point $d_{n}(\lambda)$ and $p(\lambda) > 0$
such that all of the relevant $\mathcal{R}$ transforms exist on $B_{p(\lambda)}(d_{n}(\lambda))$.
We may assume that $$(G_{X}^{(2n)})^{\langle -1 \rangle}(b) \leq M$$  on $B_{p(\lambda)/2}(d_{n}(\lambda))$ (due to analyticity).
Define $$ b_{k} = (G_{X_{k}}^{(2n)})^{\langle -1 \rangle}(b). $$
By \eqref{lipschitz}, have that
\begin{align*}
\| (G_{X}^{(2n)})^{\langle -1 \rangle}(b) - (G_{X_{k}}^{(2n)})^{\langle -1 \rangle}(b) \| & = \| (G_{X}^{(2n)})^{\langle -1 \rangle}\circ G_{X_{k}}^{(2n)}(b_{k})- (G_{X_{k}}^{(2n)})^{\langle -1 \rangle}\circ G_{X_{k}}^{(2n)}(b_{k}) \| \\
&= \|  (G_{X}^{(2n)})^{\langle -1 \rangle}\circ G_{X_{k}}^{(2n)}(b_{k}) - b_{k} \| \\
&= \|  (G_{X}^{(2n)})^{\langle -1 \rangle}\circ G_{X_{k}}^{(2n)}(b_{k}) - (G_{X}^{(2n)})^{\langle -1 \rangle}\circ G_{X}^{(2n)}(b_{k})  \| \\
& \leq \frac{2M \|G_{X_{k}}^{(2n)}(b_{k}) - G_{X}^{(2n)}(b_{k})\|}{p(\lambda)/2  - 2  \|G_{X_{k}}^{(2n)}(b_{k}) - G_{X}^{(2n)}(b_{k})\|}
\end{align*}
provided that \begin{equation}\label{domainstuff} G_{X}^{(2n)}(b_{k}) \in \mathit{dom}\left( (G_{X}^{(2n)})^{\langle -1 \rangle} \right).\end{equation}
But this holds since, after modifying \eqref{setcontainment} and \eqref{othersetcontainment} for the Cauchy transform instead of the function $K$, 
we may conclude that   $$(G^{(2n)}_{X_{k}})^{\langle -1 \rangle}(B_{p(\lambda)}(d_{n}(\lambda)) \subset B_{r(\lambda)}(d_{n}(\lambda))^{-1} .$$
In particular, if our $b$ is drawn from this set, then the $b_{k}$ are bounded.  As we have shown that the Cauchy transforms are convergent on bounded sets, we have that
$$ 0 = \lim_{k \uparrow \infty} \| G^{(2n)}_{X}(b_{k}) - G^{(2n)}_{X_{k}}(b_{k})\| =  \| G^{(2n)}_{X}(b_{k}) - b\|  $$
In particular, \eqref{domainstuff} holds so that
$$ \lim_{k \uparrow \infty } \| (G_{X}^{(2n)})^{\langle -1 \rangle}(b) - (G_{X_{k}}^{(2n)})^{\langle -1 \rangle}(b) \|  \rightarrow 0 $$
uniformly on $B_{p(\lambda)}(d_{n}(\lambda))$.  
Note that the same proof works for $X_{K} + Y_{k}$ since we only used that this is a tight family and the Cauchy transform converges to 
$G_{X + Y}$.

To complete the proof, consider the set $B_{p(\lambda)}(d_{n}(\lambda))$ where Theorem \eqref{Rexist} holds for the tight family of random variables
$$\{ X_{k}\}_{k \in \mathbb{N}} \cup \{ Y_{k}\}_{k \in \mathbb{N}}  \cup \{ X_{k} + Y_{k}\}_{k \in \mathbb{N}} \cup \{X , Y, X + Y \}.$$
For $b \in B_{p(\lambda)}(d_{n}(\lambda))$, we have
\begin{align*}
\| \mathcal{R}^{(2n)}_{X}(b) & +  \mathcal{R}^{(2n)}_{Y}(b) - \mathcal{R}^{(2n)}_{X + Y}(b)\| = \|(G_{X}^{(2n)})^{\langle -1 \rangle}(b) + (G_{Y}^{(2n)})^{\langle -1 \rangle}(b) - (G_{X + Y}^{(2n)})^{\langle -1 \rangle}(b) - b^{-1}  \| \\
&= \lim_{k \uparrow \infty}  \|(G_{X_{k}}^{(2n)})^{\langle -1 \rangle}(b) + (G_{Y_{k}}^{(2n)})^{\langle -1 \rangle}(b) - (G_{X_{k} + Y_{k}}^{(2n)})^{\langle -1 \rangle}(b) - b^{-1}  \| \\
&= \lim_{k \uparrow \infty}\| \mathcal{R}^{(2n)}_{X_{k}}(b)  +  \mathcal{R}^{(2n)}_{Y_{k}}(b) - \mathcal{R}^{(2n)}_{X_{k} + Y_{k}}(b)\| \equiv 0 
\end{align*}
where the last equality follows from the fact that our theorem is known for bounded operators.  This completes the proof.
\end{proof}

\section{Notes on the R-transform for Affiliated Operators for General B.}\label{RTransform2}




For infinite dimensional $\mathcal{B}$, it seems unlikely that a general theory of the $\mathcal{R}$ transform should be possible.
Indeed, consider an element $b' \in \tilde{\mathcal{B}}^{sa} \setminus \mathcal{B}^{sa}$.  If this set is non-empty then the analogue of the Dirac mass is a perfectly acceptable unbounded random variable.  In this case, letting $\mathcal{A} = B$ and $E = Id$, we trivially have an operator valued probability space.  The Cauchy transform of this Dirac mass is the map
$$ b\mapsto (b - b'\otimes1_{n})^{-1} $$ so that the $\mathcal{R}$ transform is simply the constant map
$$\mathcal{R}^{(n)}(b) = b'\otimes1_{n}. $$
Thus, as this simple example shows, the $\mathcal{R}$-transform may have range in the affiliated operators.
While this does not kill the theory, it does present challenges from an analytic standpoint.

Moreover, as we will show in the following example, if $\mathcal{B}$ is infinite dimensional, the $\mathcal{R}$-transform  need not exist, at least within the current conceptual framework.  The remainder of the section is dedicated to constructing an unbounded random variable with a nowhere locally invertible Cauchy transform.

\begin{lemma}
Let $z_{1} , z_{2} , \ldots , z_{n} \in \mathbb{C}^{+}$.  Then, there exists a Borel probability measure $\mu$ such that
$F_{\mu}'(z_{i})=0$ for $i= 1 , 2, \ldots, n$.  Moreover, the $F$ transform is not locally invertible at $z_{i}$.
\end{lemma}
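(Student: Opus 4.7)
The plan is to reduce the statement to a moment-type condition on $\mu$ and then realize $\mu$ as a finite convex combination of point masses by a Carath\'eodory argument applied to the curve $t \mapsto ((z_i-t)^{-2})_{i=1}^n$ in $\mathbb{C}^n$.

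Since $F_\mu = 1/G_\mu$ and $G_\mu$ maps $\mathbb{C}^+$ strictly into the open lower half plane whenever $\mu \neq 0$, the function $F_\mu$ is holomorphic at each $z_i$ with $F_\mu'(z_i) = -G_\mu'(z_i)/G_\mu(z_i)^2$. I would therefore aim to construct a probability measure $\mu$ satisfying $\int (z_i - t)^{-2}\,d\mu(t) = 0$ for every $i$. Setting
$$\Phi:\mathbb{R}\to\mathbb{C}^n \cong \mathbb{R}^{2n},\qquad \Phi(t) = \bigl((z_1-t)^{-2},\ldots,(z_n-t)^{-2}\bigr),$$
the task becomes to exhibit $0 \in \mathrm{co}(\Phi(\mathbb{R}))$.

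The key step will be to prove the stronger statement that $0$ lies in the \emph{interior} of $\mathrm{co}(\Phi(\mathbb{R}))$. By the separating hyperplane criterion, it suffices to show that for every nonzero $v \in \mathbb{C}^n$, the function
$$h_v(t) := \langle v,\Phi(t)\rangle_{\mathbb{R}^{2n}} = \Re\sum_{i=1}^n \frac{v_i}{(\bar z_i - t)^2}$$
takes both positive and negative values on $\mathbb{R}$. Each summand has antiderivative $v_i/(\bar z_i - t)$, which vanishes at $\pm\infty$, so $\int_\mathbb{R} h_v = 0$; by continuity this forces $h_v \equiv 0$ whenever $h_v$ has constant sign. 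To exclude that possibility I would use that $h_v \equiv 0$ on $\mathbb{R}$ amounts to the rational identity $\sum v_i/(\bar z_i - t)^2 = -\sum \bar v_i/(z_i - t)^2$, whose two sides have poles in the disjoint half-planes $\mathbb{C}^-$ and $\mathbb{C}^+$. A short pole-structure argument, namely that each side must individually vanish as a rational function and the double poles at the distinct points $\bar z_i$, respectively $z_i$, force all coefficients to vanish, then yields $v = 0$.

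Once $0$ is known to be interior to $\mathrm{co}(\Phi(\mathbb{R}))$, Carath\'eodory's theorem furnishes $\lambda_1,\ldots,\lambda_{2n+1}\geq 0$ with $\sum_j\lambda_j=1$ and $t_1,\ldots,t_{2n+1}\in\mathbb{R}$ such that $\sum_j\lambda_j\Phi(t_j)=0$; the discrete measure $\mu:=\sum_j\lambda_j\delta_{t_j}$ is then the desired probability measure. The final clause about local non-invertibility will follow from the classical fact that a holomorphic map whose derivative vanishes at a point is locally $k$-to-$1$ for some $k\geq 2$, and in particular fails to be injective on any neighborhood of that point.

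The step I expect to be the main obstacle is the pole-structure argument that rules out $h_v\equiv 0$. This is where the hypothesis $z_i \in \mathbb{C}^+$ enters in an essential way: it guarantees that $\{z_i\}$ and $\{\bar z_i\}$ lie in disjoint half-planes, so no cancellation between the two sums can occur. The weaker conclusion $0\in\overline{\mathrm{co}(\Phi(\mathbb{R}))}$ is trivial from $\Phi(\pm\infty)=0$, but that version would only yield a limiting "measure" leaking mass to infinity rather than an honest Borel probability measure.
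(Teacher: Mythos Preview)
Your proof is correct and takes a genuinely different route from the paper's. The paper proceeds by an explicit inductive construction: for $n=1$ it uses the symmetric Bernoulli measure $\nu=(\delta_r+\delta_{-r})/2$, whose $F$-transform $F_\nu(z)=(z^2-r^2)/z$ satisfies $F_\nu'(\pm ir)=0$, and translates to hit an arbitrary point of $\mathbb{C}^+$. For general $n$ it \emph{composes} such $F$-transforms, choosing $\nu_j$ so that $F_{\nu_j}'$ vanishes at $F_{\nu_{j-1}}\circ\cdots\circ F_{\nu_1}(z_j)$, and concludes via the chain rule; the fact that reciprocal Cauchy transforms of probability measures are closed under composition (this is monotone convolution) makes the induction legitimate. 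Your approach instead reformulates $G_\mu'(z_i)=0$ as the condition $0\in\mathrm{co}(\Phi(\mathbb{R}))$ and settles it by a separating-hyperplane and Carath\'eodory argument, with the disjointness of $\{z_i\}\subset\mathbb{C}^+$ and $\{\bar z_i\}\subset\mathbb{C}^-$ driving the pole analysis that rules out $h_v\equiv 0$. Your route is slightly less elementary but yields the sharper conclusion that $\mu$ may be taken with at most $2n+1$ atoms, whereas the paper's iterated composition typically produces a measure of infinite support. Conversely, the paper's construction is completely explicit and makes non-invertibility visible by hand (the imaginary part of the Bernoulli $F$-transform has a strict maximum along the imaginary axis), rather than invoking the local $k$-to-$1$ structure of nonconstant holomorphic maps. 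One small point worth making explicit: your pole-cancellation argument tacitly assumes the $z_i$ are pairwise distinct, but this is a harmless reduction since repeated $z_i$ impose no new constraint.
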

\begin{proof}
Proceeding by induction, we begin with the $n=1$ case.
Consider the Bernoulli measure $$ \nu = \frac{ \delta_{r} + \delta_{-r}}{2}. $$
Observe that 
$$F_{\nu}(z) = \frac{z^{2} - r^{2}}{z} \ \ ; \ \ \ F_{\nu}'(z) = -\frac{z^{2} + r^{2}}{z^{2}}$$
so that $F_{\nu}'(\pm ir) = 0$.
Also note that this function maps the imaginary axis onto itself and the imaginary part has a global maximum at $ir$, showing that it is not locally invertible on any neighborhood of this point.
The family of $F$-transforms is closed under translation so, for any point $z = s + ir$,
the $F$-transform $F_{\nu}(z-s)$ has derivative vanishing at $z$.  

For the general case, select the points $z_{1} , z_{2} , \ldots , z_{n}$.
Let $\nu_{1}$ satisfy $F_{\nu_{1}}'(z_{1})=0$.
Moreover, for each $j = 2 , \ldots , n$, let  $\nu_{j}$ satisfy \begin{equation}\label{FZero}
F'_{\nu_{j}} \circ F_{\nu_{j-1}} \circ \cdots \circ F_{\nu_{2}} \circ F_{\nu_{1}}(z_{j}) = 0.
\end{equation}
Note that this step is possible since $F$-transforms map the upper-half plane into itself and the family of $F$-transforms is closed under composition.
Thus, for $$F_{\mu} := F_{\nu_{n}} \circ F_{\nu_{n-1}} \circ \cdots \circ F_{\nu_{1}} $$
our claim follows by the chain rule and \eqref{FZero}.  Non-invertibility on the neighborhoods of these points is immediate.
\end{proof}

\begin{example}
We  construct an example of an affiliated operator $X \in \tilde{ \mathcal{A}}^{sa}$ such that $G_{X}$ is not locally invertible at any point in $\mathcal{B}^{+}$. 
Indeed, let 
$$ \mathcal{A} := L^{\infty}((0,1), \mu) \otimes \ell^{\infty}(\mathbb{N}) \ ; \ \ \mathcal{B}:= 1 \otimes \ell^{\infty}(\mathbb{N}) \ ; \ \ \mathcal{H} := L^{2}((0,1), \mu) \otimes \ell^{2}(\mathbb{N}). $$
Let $\{ \xi_{n} \}_{n \in \mathbb{N}}$ denote  the support vectors on $\ell^{\infty}(\mathbb{N})$ and $\{ \psi_{m} \}_{m \in \mathbb{N}}$ denote the basis vectors for $\ell^{2}(\mathbb{N})$.
To define a trace $\mathcal{A}$, let $$\tau(f\otimes \xi_{n}) = \frac{\mu(f)}{2^{n}}$$  (this specific trace on $\mathcal{A}$ is not important) .
Define the conditional expectation in the obvious way
$$ E(f \otimes \xi_{n}) = \mu(f)1\otimes \xi_{n}. $$

Let $\{ z_{k} \}_{k \in \mathbb{N}} \subset \mathbb{C}^{+}$ denote a dense subset.  Let $\nu_{k}$ satisfy $F_{\nu_{k}}'(z_{p}) = 0$ for $p=1,2, \ldots , k$
and assume that this is realized by a random variable $X_{k} \in L^{\infty}((0,1), \mu)$.
Abusing notation, we define an affiliated operator $X = \sum_{k=1}^{\infty} X_{k}\otimes \xi_{k}$ with domain spanned by finite sums  $$\{ L^{2}((0,1), \mu) \otimes \psi_{k}\}_{k \leq K} $$
for all $K > 0$ in the obvious way.  This is a symmetric operator since it is clearly self adjoint on it's domain.
To show that this operator has self-adjoint extension, we need only show that
$$\overline{\mathcal{R}(X \pm iI)} = \mathcal{H}. $$
However, given a finite sum, $$\sum_{k=1}^{K} f_{k}\otimes \psi_{k} $$
in $\mathcal{H}$, observe that $X_{k} + i1$ is invertible for all $k$.  Defining
$$ g_{k} = (X_{k} + i1)^{-1}f_{k} \otimes \psi_{k}$$
we have that $$X \cdot \sum_{k=1}^{K} g_{k}\otimes \psi_{k}  =  \sum_{k=1}^{K} f_{k}\otimes \psi_{k} .$$
As this is a dense subspace of $\mathcal{H}$, our claim holds.

We now claim that $G_{X}$ is nowhere locally invertible on $\mathcal{B}$.
Indeed, pick a point $\{ w_{k} \otimes \xi_{k} \}_{k \in \mathbb{N}} \in \mathcal{B}$ where $\inf_{k}\Im(w_{k}) > 0$.
Consider an $\epsilon$ neighborhood of this point in the $\sup$ norm.
Let $w$ be a cluster point of the sequence $\{ w_{k} \}_{k \in \mathbb{N}}$.
Taking our dense subset, pick $m$ so that $|z_{m} - w| < \epsilon/2$.
Pick $p > m$ such that $|w_{p} - w| < \epsilon/2$.
By the triangle inequality, the point
$$ v = \{w_{1}\otimes \xi_{1} , \ldots , w_{p-1}\otimes \xi_{p-1} , z_{m}\otimes \xi_{p} , w_{m+ 1}\otimes \xi_{m + 1} , \ldots  \} $$
is contained in our $\epsilon$ neighborhood.

Now, $$ E((v - X)^{-1}) = \{ G_{\nu_{1}}(w_{1})\otimes \xi_{1} , \ldots , G_{\nu_{p-1}}(w_{p-1}) \otimes \xi_{p-1} ,  G_{\nu_{p}}(z_{m}) \otimes \xi_{p}  ,  G_{\nu_{p+1}}(w_{p+1}) \otimes \xi_{p+1}  , \ldots \}.$$
As $G_{\nu_{p}} $
is not locally invertible at $z_{m}$ since $m < p$, our claim follows.
\end{example}

\bibliographystyle{amsalpha}
\bibliography{Bfree}
\end{document}